\documentclass[12pt]{amsart}
\usepackage{hyperref}
\usepackage{amsmath,amssymb,amscd,amsthm,amsfonts}
\usepackage[all,cmtip]{xy}
\usepackage[numbers]{natbib}

\newtheorem{thm}{Theorem}[section]
\newtheorem{lemma}[thm]{Lemma}
\newtheorem{prop}[thm]{Proposition}
\newtheorem{cor}[thm]{Corollary}
\theoremstyle{definition}

\theoremstyle{remark}

\newtheorem{rmk}[thm]{Remark}
\numberwithin{equation}{section}

\newcommand{\BA}{{\mathbb {A}}}

\newcommand{\CA}{{\mathcal {A}}}

\newcommand{\cusp}{{\mathrm{cusp}}}

\newcommand{\disc}{{\mathrm{disc}}}

\newcommand{\GL}{{\mathrm{GL}}}

\newcommand{\Hom}{{\mathrm{Hom}}}

\newcommand{\Ind}{{\mathrm{Ind}}}

\newcommand{\Mp}{{\mathrm{Mp}}}

\newcommand{\rank}{{\mathrm{rank}}}
\renewcommand{\Re}{{\mathrm{Re}}}

\newcommand{\res}{{\mathrm{res}}}

\newcommand{\SL}{{\mathrm{SL}}}

\newcommand{\Span}{{\mathrm{Span}}}
\newcommand{\tr}{{\mathrm{tr}}}



\newcommand{\A}{\mathbb{A}}
\newcommand{\Z}{\mathbb{Z}} 
\newcommand{\R}{\mathbb{R}} \newcommand{\C}{\mathbb{C}}
 
\newcommand{\cS}{\mathcal{S}}\newcommand{\cH}{\mathcal{H}}
\newcommand{\cA}{\mathcal{A}}

\newcommand{\half}{\frac {1} {2}}


\newcommand{\til}{\widetilde}


\DeclareMathOperator{\Her}{\mathrm{Her}}

\newcommand{\isom}{\cong}

\DeclareMathOperator{\pdet}{\mathrm{pdet}}
\newcommand{\bN}{\mathbf{N}}

\DeclareMathOperator{\diag}{\mathrm{diag}}
\DeclareMathOperator{\lmod}{\backslash}
\let\Re\undefined
\DeclareMathOperator{\Re}{\mathrm{Re}}

\newcommand{\form}[2]{\langle{#1},{#2}\rangle}

\usepackage{xy}

\newcommand{\cP}{\mathcal {P}}
\newcommand{\cC}{\mathcal {C}}

\DeclareMathOperator{\FO}{\mathrm {FO}}
\DeclareMathOperator{\LO}{\mathrm {LO}}

\makeatletter

\newcommand{\Rmnum}[1]{\expandafter\@slowromancap\romannumeral #1@}
\makeatother

\begin{document}
\title[On $(\chi,b)$-factors of Cuspidal Forms of Unitary Groups \Rmnum{1}]
{On $(\chi,b)$-factors of Cuspidal Automorphic Representations of Unitary Groups \Rmnum{1}}

\author{Dihua Jiang}
\address{School of Mathematics\\
University of Minnesota\\
Minneapolis, MN 55455, USA}
\email{dhjiang@math.umn.edu}

\author{Chenyan Wu}
\address{School of Mathematics, University of Minnesota,\\ Minneapolis, MN 55455, USA}
\email{cywu@umn.edu}

\thanks{The research of the first named author is supported in part by the NSF Grants DMS--1301567.}
\keywords{Arthur parameters, Poles of automorphic $L$-functions, Theta correspondence and Periods}
\date{\today}

\dedicatory{ Dedicated to Wen-Ching Winnie Li}

\begin{abstract}
Following the idea of \cite{MR2540878} for orthogonal groups, we introduce a new family of period integrals for cuspidal automorphic
representations $\sigma$ of unitary groups and investigate their relation with the occurrence of a simple global Arthur parameter $(\chi,b)$
in the global Arthur parameter $\psi_\sigma$ associated to $\sigma$, by the endoscopic classification of Arthur 
(\cite{MR3135650}, \cite{mok13:_endos}, \cite{KMSW}).
The argument uses the theory of theta correspondence. This can be viewed as a part of the $(\chi,b)$-theory outlined in \cite{jiang14:_autom_integ_trans_class_group_i} and can be regarded as a refinement of the
theory of theta correspondences and poles of certain $L$-functions, which was outlined in \cite{MR1159270}.
\end{abstract}
\maketitle{}

\section{Introduction}
\label{intro}

Let $F$ be a number field and $E$ be a quadratic extension of $F$. We denote by $\BA=\BA_F$ the ring of adeles of $F$, and by $\BA_E$ that of $E$.  
Let $G$ be a unitary group associated to an $m$-dimensional skew-Hermitian
vector space $X$. Consider $\sigma$ in $\CA_\cusp(G)$, the set of equivalence classes of irreducible cuspidal automorphic representations
of $G$ that occurs in the discrete spectrum, following the notation of \cite{MR3135650}. Let $\chi$ be an automorphic character of $\GL_1(\BA_E)$.
The tensor product $L$-functions $L(s,\sigma\times\chi)$ has been investigated through the work of Li in \cite{MR1166512} by using
the doubling method of Piatetski-Shapiro and Rallis, and the work of Langlands by calculating constant terms of the Eisenstein series with
cuspidal support $\chi\otimes\sigma$ (\cite{MR0419366} and \cite{MR2683009}). 
The philosophy of understanding the location of the poles of this family of $L$-functions in terms of
the theta correspondence for dual reductive pairs of unitary groups is outlined by Rallis in \cite{MR1159270}. A theory that
the location of the poles and non-vanishing of certain values of those $L$-functions detect the local-global principle for the first occurrences
the local and global theta correspondences has been completed through the work of Kudla-Rallis (\cite{MR1289491}) and that of Gan-Qiu-Takeda (\cite{gan14:_regul_siegel_weil_formul_secon}).

Periods of automorphic forms are also one of the important invariants attached to cuspidal automorphic representations in general. When
$G$ is an orthogonal group, Ginzburg, Soudry and the first named author of this paper introduce in \cite{MR2540878}
a family of period integrals for orthogonal
groups that detect the location of poles of the corresponding family of $L$-functions, and hence detect the first occurrence of
the theta correspondence for the dual reductive pairs of orthogonal groups and symplectic groups. The result and other relevant
results for orthogonal groups developed in \cite{MR2540878}, \cite{MR2330445}, \cite{MR1473165} and \cite{MR1473166} have recently found
their important applications in the investigation of Hodge type theorems for arithmetic manifolds in
a work of Bergeron, Millson and M\oe glin (\cite{bergeron12:_hodge}). This motivates the authors of this paper to look for a similar
theory when $G$ is a symplectic group (or metaplectic group) and unitary groups.
We expect that the results in this paper and in our forthcoming papers for unitary groups (\cite{jiang-wu-u2}) and symplectic groups (\cite{jiang-wu-sp}),
in addition to the work of the second named author (\cite{MR3071813}),
will play similar roles in the study of arithmetic manifolds associated to unitary groups and symplectic groups (\cite{bergeron14:_hodge} and \cite{MR3012154}).

The obvious adaption of period integrals for orthogonal groups in
\cite [eqs. (5.4), (6.1)] {MR2540878} to the unitary groups will not be enough for the theory for unitary groups, since the period integrals in \cite{MR2540878} 
require the target groups under the theta correspondence to be $F$-split (which is the case in loc. cit., since they are symplectic groups). 
Hence we have to figure out a new family of
period integrals through the direct calculation of the Fourier coefficients as suggested by the work of Li (\cite{MR1168122}), and
establish the theorems for symplectic groups (\cite{jiang-wu-sp}) and unitary groups (\cite{jiang-wu-u2}), which are parallel to those in \cite{MR2540878} for orthogonal groups. To complete the whole
story, we have to use the Arthur truncation to regularize the new family of period integrals for residues of the Eisenstein series with
cuspidal support $\chi\otimes\sigma$. As the new family of period integrals are more complicated in structure, 
the original proof in \cite{MR2540878} for orthogonal groups needs more details to
justify the convergence of some families of integrals from the truncation process. 
This is a main reason for us to write a second paper on this topic for unitary groups (\cite{jiang-wu-u2}), and
another paper for symplectic groups (\cite{jiang-wu-sp}).
In fact, the family of period integrals in the symplectic group case look even more complicated. The details will be given in \cite{jiang-wu-sp}.

It is well-known that theta correspondences, which extends the classical Shimura correspondence, is a powerful method to construct automorphic representations. One of the basic problems in the theory, dated in the early 1980s,
is to understand the relation between theta correspondences and the Langlands functorial transfers (\cite{MR658543}). 
The well-known conjecture of J. Adams
(\cite{MR1021501}) predicts the compatibility of theta correspondences with the Arthur-Langlands transfers. Hence, with the endoscopic
classification of Arthur, it is better to reformulate the theory of theta correspondences and related topics in automorphic forms 
in terms of the basic structure of the discrete spectrum of classical groups.
This issue was discussed with details in \cite[Section 7]{jiang14:_autom_integ_trans_class_group_i}.
We also refer to the work of M\oe glin (\cite{MR2906916}) on the relation between the Adams conjecture and theta correspondence.

Following \cite{MR3135650}, \cite{mok13:_endos}, \cite{KMSW}, and also \cite[Section 7]{jiang14:_autom_integ_trans_class_group_i},
the pair $(\chi, b)$ (for an integer $b\geq 1$) represents a simple global Arthur parameter for $G$.
The endoscopic classification of the discrete spectrum asserts that
each $\sigma$ in $\CA_\cusp(G)$ is attached to a global Arthur parameter $\psi=\psi_\sigma$.
As in \cite[Section 8.1]{jiang14:_autom_integ_trans_class_group_i}, it is easy to check that a simple global Arthur parameter $(\chi, b)$
occurs in the global Arthur parameter $\psi_\sigma$ with a maximal possible integer $b$ 
if and only if the partial $L$-function $L^S(s,\sigma\times\chi)$
is holomorphic for $\Re(s)>\frac{b+1}{2}$ and has a simple pole at $s=\frac{b+1}{2}$. Hence with the theory of endoscopic classification of
the discrete spectrum at hand, the program initiated by Rallis to understand the location of poles of the partial $L$-function $L^S(s,\sigma\times\chi)$ is eventually to detect the occurrence of the simple global Arthur parameter $(\chi, b)$ in the global Arthur parameter
$\psi_\sigma$ of $\sigma$. This is what the title of this paper indicates. The paper is organized as follows.

In Section~\ref{sec:pole-eis}, we relate  the location of poles of the family of Eisenstein series with cuspidal support $\chi\otimes\sigma$ to the location of poles of certain Siegel Eisenstein series. This is important for application to the global theta correspondences involved in the proofs of our main results. We also calculate the location of poles of this family of Eisenstein series in terms of  the location of poles of the partial $L$-function $L^S(s,\sigma\times\chi)$.
In Section~\ref{sec:fo-theta}, we recall the basic set-up of dual reductive pairs for unitary groups and define the notion of {\sl Lowest Occurrence} for the Witt towers of unitary groups. Note that the definition of the lowest occurrence in this paper is slightly different from that given in
\cite{MR2540878}, due to the different nature of the target Witt towers between orthogonal groups and unitary groups.
Theorem~\ref{thm:poleL} gives a bound for the lowest occurrence in terms of the location of poles of $L^S(s,\sigma\times\chi)$ or non-vanishing of $L^S(s,\sigma\times\chi)$ at $s=1/2$. This can be deduced from a more technical result (Theorem~\ref{thm:Eis-pole-LO}), which is proved in Section~\ref{sec:pf-thm}. A different approach using the Rallis inner product formula is undertaken in \cite{bergeron14:_hodge} to prove a similar result.
Based on the work of Li (\cite{MR1168122}) on non-existence of singular cusp forms for classical groups, we calculate explicitly the non-singular
Fourier coefficients of the first occurrence in Section~\ref{sec:period}.
This calculation produces a new family of  period integrals as stated in Propositions~\ref{prop:period-X->Y}
and \ref{prop:period-Y->X}. The other main result (Theorem~\ref{thm:mainperiod}), which is proved in Subsection~\ref{sec:mainperiod}, relates the existence of $(\chi,b)$-factor in the global Arthur parameter of $\sigma$ to the non-vanishing of the new family of period integrals on $\sigma$.
 At the end, we study another family of period
integrals which will be used in our further work on the topic.

Finally, we would like to thank the referee for helpful suggestions which improve the introduction of this paper.

\section{Poles of Certain Eisenstein series}
\label{sec:pole-eis}
\subsection{Notation and preliminary}
\label{sec:notation}
Let $E/F$ be a quadratic extension of number fields. Let $\A=\A_F$
be the ring of adeles of $F$ and $\A_E$ that of $E$. Let $\varepsilon_{E/F}$ be the quadratic character of $\A_F$ that is associated to the quadratic extension $E/F$ via Class Field Theory. We fix a
non-trivial additive character $\psi_F$ of $\A_F$ and define an additive character $\psi_E$ of $\A_E$ to be $\psi_F \circ (\half \tr_{E/F})$. We will usually write $\psi$ for $\psi_F$. It is, in general, clear from the context if $\psi$ denotes an Arthur parameter or an additive character. Fix $\delta\in
E^\times$ such that $\overline {\delta} = -\delta$. Scaling a Hermitian form by $\delta$  produces a skew-Hermitian form.
In this way we identify Hermitian and skew-Hermitian spaces. Let $X$ be a
vector space over $E$ of dimension $m$ endowed with the skew-Hermitian
form $\form{\ }{\
}_X$. 
Let $G (X)$ denote the unitary group of $X$. 
Let $\cH_a$ denote the split skew-Hermitian space of dimension $2a$. We use the dual basis $e_1^+,\ldots, e_a^+, e_1^-,\ldots,e_a^+$ for $\cH_a$ such that
\begin{equation*}
  \form{e_i^+}{e_j^-}_{\cH_a} =\delta_{ij} \quad\text { and} \quad \form{e_i^+}{e_j^+}_{\cH_a} = \form{e_i^-}{e_j^-}_{\cH_a}=0,
\end{equation*}
for $i,j =1,\ldots, a$, where $\delta_{ij}$ is the Kronecker delta.
Let $\ell_a^+$ (resp. $\ell_a^-$) be the span of $e_i^+$'s (resp. $e_i^-$'s). Then $\cH_a$ has the
 polarisation
\begin{equation*}
  \cH_a = \ell_a^+ \oplus \ell_a^-.
\end{equation*}
We form the dimension $m+2a$ skew-Hermitian space
\begin{equation*}
  X_a = X \perp \cH_a
\end{equation*}
and denote its isometry group  by $G (X_a)$. Let $Q_a$ be
the parabolic subgroup of $G (X_a)$ that stabilises $\ell_a^-$ and let $M_a$ be its Levi subgroup and $N_a$ its unipotent radical. Let $R_{E/F}$ denote the restriction of scalars of Weil. We
have
\begin{equation*}
  M_a \isom R_{E/F}\GL_a \times G (X).
\end{equation*}
Given $x\in R_{E/F}\GL_a$ and $h\in G (X)$ we denote the
corresponding element in $M_a$ by $m (x,h)$.

Fix a good maximal compact subgroup $K_a = \prod_v K_{a,v}$ of $G
(X_a) (\A)$ such that the Iwasawa decomposition
\begin{equation*}
  G (X_a) (\A) = Q_a (\A) K_a
\end{equation*}
holds.  For a Hermitian space $Y$, we may also form $Y_a$ with $\cH_a$ being a split Hermitian space.

Next we set up some notation for the doubling method and its extension.
Let $X'$ be the skew-Hermitian space with the same underlying space as $X$ but with the form scaled by $-1$:
\begin{equation*}
  \form{\ }{\ }_{X'}= -\form{\ }{\ }_{X}.
\end{equation*}
We will identify $G (X')$ with $G (X)$.
Form the  doubled space $W$ of $X$:
\begin{equation*}
  W = X \perp X'.
\end{equation*}
Define
\begin{equation*}
  X^\Delta = \{(x, x)\in W| x\in X\} \quad\text {and}\quad  X^\nabla = \{(x, -x)\in W| x\in X\} .
\end{equation*}
 Then
$  W = X^\Delta \oplus X^\nabla$
is a polarisation of $W$. Let $W_a = W \perp \cH_a$ and
\begin{equation*}
  \iota = \iota_1 \times \iota_2 : G (X_a)  \times G (X') \hookrightarrow G (W_a)
\end{equation*}
be the natural inclusion.
Define $X^\Delta_a = X^\Delta \oplus \ell_a^+$ and $X^\nabla_a =
X^\nabla \oplus \ell_a^-$. Then $W_a$ has a polarisation given by
\begin{equation*}
  W_a = X^\Delta_a \oplus X^\nabla_a.
\end{equation*}
Let $P_a$ denote the Siegel parabolic subgroup of $G (W_a)$ that stabilises $X^\nabla_a$.

\subsection{Certain Eisenstein series}
\label{sec:pole-eis-Qa}

We consider the Eisenstein series on $G (X_a)$ associated to the
parabolic subgroup $Q_a$. We will largely follow the notation in \cite{MR1361168}.

First we determine the Shahidi normalisation (\cite{MR2683009}). Since $Q_a$ is a maximal parabolic subgroup, $\mathfrak
{a}^*_{M_a}$ is one-dimensional. Let $T_0$ be the split component of the center of the minimal Levi subgroup of
$G (X_a)$, which is as fixed according to the set-up of the space $X_a$. 
Then $Q_a$ corresponds to a simple root $\alpha$ in the restricted
root system $\Delta (T_0, G (X_a))$. Let $\rho_{Q_a}$ be the half sum
of positive roots in $N_a$. Then the Shahidi normalisation identifies
$\C$ with $\mathfrak {a}^*_{M_a,\C}$ via $s\mapsto s\til {\alpha}$
where
\begin{equation*}
  \til {\alpha} = \form{\rho_{Q_a}}{\alpha}^{-1} \rho_{Q_a} = \left(\frac{m+a}{2}\right)^{-1}\rho_{Q_a}.
\end{equation*}
Here $\form{\ }{\ }$ is the usual Killing-Cartan form of the
non-restricted root system of $G (X_a)_{\overline {F}}$. As pointed out in \cite
[p. 10] {MR2683009}, one should first lift the relative roots to
absolute ones and then compute the Killing-Cartan form. The
value of the pairing is independent of the choice of lifts. Under this normalisation $\rho_{Q_a} = (m+a)/2$.

Let $H_a: M_a (\A) \rightarrow  \mathfrak {a}_{M_a}$ be the map defined
as follows: for $\chi \in \mathfrak {a}_{M_a}^*$,
\begin{equation*}
  \exp ( H_a (m) (\chi)) =  \prod_v | \chi (m) |_v.
\end{equation*}
Then
\begin{equation*}
  \exp ( H_a (m (x,h)) (s)) = |\det x|_{\A_E}^s
\end{equation*}
and
\begin{equation*}
  \exp ( H_a (m (x,h)) (\rho_{Q_a})) = |\det x|_{\A_E}^{\frac { m+a} {2}},
\end{equation*}
where $x\in R_{E/F}\GL_a (\A)$ and $h \in G (X) (\A)$.   We
extend $H_a$ to a function of $G (X_a) (\A)$ via the Iwasawa
decomposition.

Denote by $\CA_\cusp(G(X))$ the set of equivalence classes of irreducible
cuspidal automorphic representations of $G(X)(\BA)$.
Take $\chi$ to be a character of $E^\times \lmod \A_E^\times$ such
that $\chi|_{\A^\times} = \varepsilon_{E/F}^\epsilon$ for $\epsilon=0$
or $1$. Given such a $\chi$, we set $\epsilon_\chi$ to be the exponent
$\epsilon$. For $\sigma\in\CA_\cusp(G(X))$, denote by $\cA_{a} (s,\chi, \sigma) $ the space of smooth
$\C$-valued functions $\phi$ on $N_a (\A)M _a (F)\lmod G (X_a) (\A)$
that satisfy the following properties:
\begin{enumerate}
\item $\phi$ is right $K_a$-finite;
\item for any $x\in R_{E/F}\GL_a (\A)$ and $g\ \in G (X_a) (\A)$,
  \begin{equation*}
    \phi (m (x, I_X) g) = \chi (\det x) |\det x|_{\A_E}^{s+\rho_{Q_a}} \phi (g);
  \end{equation*}
\item for any fixed $k\in K_a$ and for any  $h \in G (X) (\A)$, the function
  \begin{equation*}
    h \mapsto \phi (m (I_a, h) k)
  \end{equation*}
is a smooth right $K_a\cap M_a^1$-finite vector in the space of $\sigma$.
\end{enumerate}

For $\phi \in \cA_{a}(0,\chi,\sigma)$ and $s\in \C$, define
\begin{equation*}
  \phi_s (g) := \exp ( H_a (g) (s)) \phi (g).
\end{equation*}
It is a section  in $\cA_{a} (s,\chi, \sigma) $ and we may identify it with a
smooth section in $\Ind_{Q_a (\A)}^{G (X_a) (\A)} \chi |\det
|_{\A_E}^s \otimes \sigma$.  Form the Eisenstein series
\begin{equation*}
  E^{Q_a} (g,s,\phi) := \sum_{\gamma \in Q_a (F) \lmod G (X_a) (F)} \phi_s (\gamma g).
\end{equation*}
By Langlands theory of Eisenstein series, it is absolutely convergent
for $\Re (s)> \rho_{Q_a}$ and has meromorphic continuation to
the whole $s$-plane, with finitely many poles in the half plane $\Re
(s) >0$, which are all real in our case (\cite{MR1361168}).

Let $\cP_a (\sigma,\chi)$ denote the set of positive poles of the
Eisenstein series $E^{Q_a} (g,s,\phi)$ for $\phi$ running over
$\cA_{a}(0,\chi,\sigma)$, i.e., $s_0 \in \cP_a (\sigma,\chi)$ if and
only if $s_0>0$ and there exists $\phi \in \cA_{a}(0,\chi,\sigma) $
such that $E^{Q_a} (g,s,\phi)$ has a pole at $s=s_0$. We have the
following proposition regarding the maximal pole in $\cP_1
(\sigma,\chi)$ and $\cP_a (\sigma,\chi)$. This is the unitary analog
to \cite [Remarque 1] {MR1473165} and \cite [Prop.~2.1] {MR2540878}.

\begin{prop}\label{prop:pole1->polea}
  Assume that $\cP_1 (\sigma,\chi)$ is non-empty and that $s_0$ is its maximum member. Then for all integers $a\ge1$, $s= s_0 +\frac{1}{2} (a-1)$ lies in $\cP_a (\sigma,\chi)$ and is its maximum member.
\end{prop}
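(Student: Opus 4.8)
The plan is to prove both containment $s_0+\tfrac12(a-1)\in\cP_a(\sigma,\chi)$ and maximality by an induction on $a$, using the standard technique of relating the Eisenstein series $E^{Q_a}$ to the Eisenstein series $E^{Q_{a-1}}$ via the constant-term / pole-propagation mechanism. The base case $a=1$ is the hypothesis. For the inductive step, I would exploit the factorization of the parabolic $Q_a$: inside $G(X_a)$ one can realize $Q_a$ as (the appropriate intersection with) a chain of parabolics, so that an Eisenstein series attached to $Q_a$ with inducing data $\chi\lvert\det\rvert^s\otimes\sigma$ is, up to normalizing factors, obtained by first forming $E^{Q_{a-1}}$ to produce a residual representation on $G(X_{a-1})$ at $s=s_0+\tfrac12(a-2)$, and then inducing that residual representation one further step along the $\GL_1$-direction. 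Concretely, one compares $E^{Q_a}(g,s,\phi)$ with an iterated Eisenstein series built from $E^{Q_{a-1}}$, and reads off poles through the constant term along $N_a$, whose formula is governed by intertwining operators and hence by ratios of the partial $L$-functions $L^S(s,\sigma\times\chi)$ and abelian $L$-functions of $\chi$ (this is exactly where the $\tfrac12$-shift $s\mapsto s\pm\tfrac12$ between consecutive steps enters, since adjacent roots in the $R_{E/F}\GL_a$-part of the Levi differ by that amount in the Shahidi normalization).

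The key steps, in order, would be: (1) write down the constant term $E^{Q_a}_{Q_a}(g,s,\phi)$ along $N_a$ using Langlands' formula — a sum over the relevant Weyl set of intertwining operators $M(w,s)$ applied to $\phi_s$; (2) identify the normalizing factors of these $M(w,s)$ with products of $L^S(s,\sigma\times\chi)$, $L^S(2s,\ldots)$ (or the $\chi$-twisted abelian $L$-functions) evaluated at arguments shifted by half-integers; (3) show that the unique positive pole of maximal size is forced to sit at $s=s_0+\tfrac12(a-1)$, by combining the location of the maximal pole of $L^S(s,\sigma\times\chi)$ (which at $a=1$ is pinned by the hypothesis $s_0\in\cP_1$, i.e.\ by Langlands' computation of the constant term recalled in this section and by Proposition~\ref{prop:pole1->polea}'s own setting tying $\cP_1$ to that $L$-function) with the half-integer spacing of the abelian factors, which contribute at most simple poles and cannot combine to produce anything larger; (4) for the \emph{existence} direction, choose $\phi$ appropriately — essentially, take the section that at the previous stage realizes the residue, and observe that the leading term of the constant term is then genuinely nonzero, so the pole is attained; (5) check that no pole strictly larger than $s_0+\tfrac12(a-1)$ can occur, which follows because such a pole would, upon taking a further residue or restricting the constant term, produce a pole of $E^{Q_{a-1}}$ strictly above $s_0+\tfrac12(a-2)$, contradicting the inductive hypothesis that that is the maximum of $\cP_{a-1}(\sigma,\chi)$.

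The main obstacle I anticipate is step (3) together with step (5): controlling that the half-integer-shifted abelian $L$-factors and the various intertwining operators do not conspire to create a pole larger than the expected one, and conversely that the expected pole is not accidentally cancelled. This requires knowing precisely which normalized intertwining operators are holomorphic and nonvanishing in the relevant range — information about $L^S(s,\sigma\times\chi)$ (holomorphy for $\Re(s)>s_0$, simple pole at $s_0$) that the paper has set up — and a careful bookkeeping of the Weyl-group contributions to the constant term. The bound $\rho_{Q_a}=(m+a)/2$ computed above and the explicit description of $M_a\cong R_{E/F}\GL_a\times G(X)$ are what make this bookkeeping tractable; once the constant term is in hand, the induction is a matter of matching poles on the two sides and verifying that the leading coefficient survives. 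I would also need the standard fact that all poles in $\Re(s)>0$ are real (recalled above from \cite{MR1361168}), so that "maximal pole" is unambiguous and residues can be taken iteratively without complex-analytic subtleties.
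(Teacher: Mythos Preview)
Your inductive skeleton is reasonable, but there is a genuine gap in step~(3), and it propagates to steps~(4) and~(5). You propose to locate the maximal pole by reading off the normalizing $L$-factors of the intertwining operators and then invoking ``the location of the maximal pole of $L^S(s,\sigma\times\chi)$''. But the hypothesis says nothing about $L^S(s,\sigma\times\chi)$: it says only that $s_0$ is the maximal member of $\cP_1(\sigma,\chi)$, i.e.\ that the \emph{global} intertwining operator for $G(X_1)$ has a pole at $s_0$ and is holomorphic for $\Re s>s_0$. The implication from poles of $L^S$ to $\cP_1$ goes only one way in this paper (Proposition~\ref{prop:L-pole->Eis-pole}, and that is the reverse direction from what you need), and the Gindikin--Karpelevich factors you write down in step~(2) only govern the unramified part of $M(w,s)$; the ramified local operators could in principle create or cancel poles that the partial $L$-function does not see. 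So the argument, as written, does not close.

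The paper's proof bypasses $L$-functions entirely. It takes the constant term of $E^{Q_a}$ along the parabolic $Q_a'$ stabilising a \emph{full} flag in $\ell_a^-$ (not along $Q_a$ itself, and not inductively via $Q_{a-1}$). Cuspidality of $\sigma$ kills all but the Weyl representatives $w_T$ indexed by subsets $T\subset\{1,\dots,a\}$, and each $M(w_T,s)$ is then factored into rank-one pieces: some live in copies of $R_{E/F}\GL_2$ and some in copies of $G(X_1)$. The $\GL_2$ pieces are holomorphic for $\Re s>\tfrac12(a-1)$ directly (rank-one operators between principal series of $\GL_2$ at generic parameters), so every pole of $M(w_T,s)$ in that half-plane comes from a $G(X_1)$ piece. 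But a $G(X_1)$ piece \emph{is} an $a=1$ intertwining operator, evaluated at $s-\tfrac12(a-1)+j-1$ for some $1\le j\le a$; hence its poles are governed by $\cP_1(\sigma,\chi)$ with no $L$-function intermediary. This gives the upper bound at once. For existence, the paper builds a commutative square relating $M(w_\varnothing,s)$ on $G(X_a)$ to $M(w_\varnothing,s-\tfrac12(a-1))$ on $G(X_1)$ via a restriction map with dense image, so the pole at $s_0$ on the $G(X_1)$ side forces one at $s_0+\tfrac12(a-1)$ on the $G(X_a)$ side.

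If you want to salvage the inductive route, you must replace the $L$-function bookkeeping by this rank-one factorization of the global intertwining operators at every step; once you do that, the induction on $a$ and the paper's direct argument are the same computation unrolled differently.
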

\begin{proof}
 Let $Q_a'=M_a' N_a'$ be the parabolic subgroup of $G (X_a)$ that stabilises a full flag of isotropic subspaces
 \begin{equation*}
   \Span \{e_1^-\} \subset \Span \{e_1^-,e_2^-\} \subset\cdots \subset \Span \{e_1^-,\ldots,e_a^-\}
 \end{equation*}
 of $\ell_a^-$.  Thus its Levi part $M_a'$ is isomorphic to
 \begin{equation*}
   \underbrace  { R_{E/F}\GL_1 \times \cdots  \times R_{E/F}\GL_1}_{\text {$a$ fold}} \times G (X).
 \end{equation*}
Since $E^{Q_a} (g,s,\phi)$ is concentrated on $Q_a'$, to determine its  poles we only need to consider the poles of its constant term along $Q_a' $:
  \begin{equation}
\label{eq:const-term-eis}
E^{Q_a}_{Q_a'} (g,s,\phi) = \int_{[N_a']} E^{Q_a} (ng,s,\phi) dn.
  \end{equation}
To unfold we consider the double coset decomposition
\begin{equation*}
  Q_a (F)  \lmod G (X_a) (F) / Q_a' (F),
\end{equation*}
which is parameterised by the double cosets of Weyl groups
\begin{equation*}
  W_{Q_a'} \lmod W_{G (X_a)} / W_{Q_a}.
\end{equation*}
A system of  representatives for $W_{Q_a'} \lmod W_{G (X_a)} / W_{Q_a}$ is chosen to consist of those Weyl elements $w$ that have minimal length in their respective double coset $W_{Q_a'} w W_{Q_a}$.
Thus for $\Re (s)$ large, \eqref{eq:const-term-eis} is equal to
\begin{equation}
\label{eq:const-term-eis2}
  \sum_{w } \int_{ N_{a,w}' (\A)} \int_{[N_a' {}^ {,w}]}
 \sum_{m \in wQ_aw^{-1}\cap M_a' \lmod M_a'} \phi_s (w^{-1}mn_1n_2g) dn_1 dn_2,
\end{equation}
where the summation in $w$ runs over $W_{Q_a'} \lmod W_{G (X_a)} / W_{Q_a}$.
By cuspidality of $\sigma$, it can be seen that the terms corresponding to those $w$ such that $wQ_aw^{-1}\cap M_a' \lmod M_a'$ is not trivial must vanish.

We describe more precisely those contributing Weyl elements. Let $r$ denote the Witt index of $X$. Then the Weyl group $W_{G (X_a)}$ for $G (X_a)$ can be identified with the  signed permutations on $a+r$ letters. Let $T$ be a subset of $\{1,2,\ldots,a\}$. Order the elements of $T$ as $i_1<i_2<\cdots<i_t$ where $t$ is the cardinality of $T$ and the elements of the complement of $T$ as $j_1 < j_2 <\cdots < j_{a-t}$. Define $w_T$ by
\begin{align}\label{eq:w_T1}
  w_T (k) &= i_k \quad &\text {for $1 \le k\le t$ }\\
  \label{eq:w_T2}
  w_T (k) &= -j_{a-k+1} \quad &\text {for $t < k\le a$ }\\
  w_T (k) &=k \quad &\text {for $a < k \le a+r$ }. \nonumber
\end{align}
These are exactly the contributing Weyl elements.
Thus \eqref{eq:const-term-eis2} is equal to
\begin{align*}
  \sum_{T} \int_{ N_{a,w}' (\A)}   \phi_s (w_T^{-1}ng)  dn
\end{align*}
where $T$ runs over subsets of $\{1,2,\ldots,a\}$. Define $M (w_T,s)$ by
\begin{align}\label{eq:M_w_s}
  M (w_T,s)\phi_s (g) = \int_{ N_{a,w}' (\A)}   \phi_s (w_T^{-1}ng)  dn.
\end{align}
This is an intertwining operator for the parabolic subgroup $Q_a'$ and can be meromorphically continued to the whole $s$-plane. We note that $\phi_s$ can be viewed as a smooth section in
\begin{align*}
  \Ind_{Q_a' (\A)}^{G (X_a) (\A)} \chi |\ |_{\A_E}^{s-\frac{1}{2} (a-1)}\otimes
\chi |\ |_{\A_E}^{s-\frac{1}{2} (a-3)} \otimes \cdots \otimes
\chi |\ |_{\A_E}^{s-\frac{1}{2} (1-a)}\otimes \sigma.
\end{align*}
Decompose $w_T$ into ``simple reflections'' as in \cite [Lemma 4.2.1] {MR2683009}. There are two types of ``simple reflections'', one that only involves   subgroups of $G (X_a)$ isomorphic to $R_{E/F}\GL_2$ and one that involves the subgroups of the form $G (X_1)$.
The first type of the intertwining operators $M (w_{i,j},s)$ takes
\begin{equation*}
  \Ind_{ B (\A)}^{R_{E/F}\GL_2 (\A)} \chi|\ |_{\A_E}^{ s - \frac{1}{2} (a-1)+i-1} \otimes \chi|\ |_{\A_E}^{ - ( s - \frac{1}{2} (a-1)+j-1)}
\end{equation*}
to
\begin{equation*}
  \Ind_{ B (\A)}^{R_{E/F}\GL_2 (\A)} \chi|\ |_{\A_E}^{ - ( s - \frac{1}{2} (a-1)+j-1)} \otimes \chi|\ |_{\A_E}^{ s - \frac{1}{2} (a-1)+i-1}
\end{equation*}
for $t<j\le a$ and $j_{a-j+1} \le i \le t$ and the second type of the intertwining operators $M (w_{j},s)$ takes
\begin{equation*}
  \Ind_{ Q_1 (\A)}^{G (X_1) (\A)} \chi|\ |_{\A_E}^{  s - \frac{1}{2} (a-1)+j-1} \otimes \sigma
\end{equation*}
to
\begin{equation*}
   \Ind_{ Q_1 (\A)}^{G (X_1) (\A)} \chi|\ |_{\A_E}^{  - ( s - \frac{1}{2} (a-1)+j-1)} \otimes \sigma
\end{equation*}
for $t<j\le a$.

The intertwining operator $M (w_{i,j},s)$ is always holomorphic when $\Re (s)>\frac{1}{2} (a-1)$. It follows that the holomorphy of $M (w_T,s)$
for $\Re s >\frac{1}{2} (a-1)$ depends  only on the second type of the intertwining operators $M (w_{j},s)$.
Denote by $s_{\text{max}}$ the maximal pole in $\cP_1 (\sigma,\chi)$.
Assume that $s_0 \in \cP_a (\sigma,\chi)$ and $s_0 > \frac{1}{2} (a-1)$. Then for some $T$, $M (w_T,s)$ has a pole at $s=s_0$ and hence for some $j$, $M (w_j,s)$ has a pole at $s=s_0$. Since $M (w_j,s)$ only involves a group isomorphic to $G ( X_1)$, we get
\begin{align}\label{eq:upperbound-pole}
  s_0 - \frac{1}{2} (a-1)+j-1 \le s_{\text{max}}
\end{align}
and a fortiori
\begin{align*}
  s_0 - \frac{1}{2} (a-1) \le s_{\text{max}}.
\end{align*}

Now we show that $s_{\text{max}}+\frac{1}{2} (a-1) \in \cP_a (\sigma,\chi)$. From the computation above we only need to consider if  $M (w_\varnothing,s)$ has a pole at this position. All other $M (w_T,s)$'s are holomorphic at this position by assumption on the maximality of $s_{\text {max}}$. In addition, only the ``simple reflection'' $w_j$ for $j=0$ in the decomposition $w_\varnothing$ can possibly have a pole at $s=s_{\text{max}}+\frac{1}{2} (a-1)$. We try to single out this part as follows.

Let $Q_{1,a-1}$ be the standard parabolic subgroup of $G (X_a)$ that stabilises the flag $\Span \{e_1^-\} \subset \Span \{e_1^-,\ldots e_a^-\}$
and $Q_{a-1,1}$ the one that  stabilises the flag $\Span \{e_1^-,\ldots, e_{a-1}^-\} \subset \Span \{e_1^-,\ldots e_a^-\}$.
Let $w$ be the Weyl element defined by
\begin{align*}
  w (1)&=a;\\
  w (i)& = i-1 \quad \text {for $i \ne 1$.}
\end{align*}
We have an intertwining operator associated to $w_{\{ 1\}}$:
\begin{align}\label{eq:intertwining-w1}
  \Ind_{Q_{1,a-1} (\A)}^{G (X_a) (\A)} \chi|\ |_{\A_E}^{s_1}\otimes \chi|\det|_{\A_E}^{s_2} \otimes \sigma
\rightarrow
  \Ind_{Q_{1,a-1} (\A)}^{G (X_a) (\A)} \chi|\ |_{\A_E}^{s_1}\otimes\chi|\det|_{\A_E}^{-s_2} \otimes \sigma
\end{align}
and an intertwining operator associated to $w$:
\begin{align}\label{eq:intertwining-w}
  \Ind_{Q_{1,a-1} (\A)}^{G (X_a) (\A)} \chi|\ |_{\A_E}^{s_1}\otimes \chi|\det|_{\A_E}^{-s_2} \otimes \sigma
\rightarrow
  \Ind_{Q_{a-1,1} (\A)}^{G (X_a) (\A)} \chi|\det |_{\A_E}^{-s_2}\otimes \chi|\ |_{\A_E}^{s_1} \otimes \sigma.
\end{align}
It clear that there is the following canonical inclusion
\begin{align}\label{eq:inclusion}
  \Ind_{Q_a (\A)}^{G (X_a) (\A)} \chi |\det |_{\A_E}^s \otimes \sigma \hookrightarrow
\Ind_{Q_{1,a-1} (\A)}^{G (X_a) (\A)} \chi|\ |_{\A_E}^{s - \frac{1}{2} (a-1)}\otimes \chi|\det|_{\A_E}^{s+\frac{1}{2}} \otimes \sigma.
\end{align}
We specialise $s_1$ and $s_2$  to $s - \frac{1}{2} (a-1)$ and $s+\frac{1}{2}$ respectively and
 deduce that near $s=s_{\text{max}}+\frac{1}{2} (a-1)$, the intertwining operator \eqref{eq:intertwining-w1} is a bijection  by \eqref{eq:upperbound-pole} and the intertwining operator \eqref{eq:intertwining-w} is also a bijection by computation in several $R_{E/F}\GL_2$'s.

Consider the restriction map from $G (X_a)$ to $G (Ee_a^+ \oplus X\oplus Ee_a^-)$:
\begin{align}\label{eq:restriction}
  \Ind_{Q_{a-1,1} (\A)}^{G (X_a) (\A)} \chi|\det |_{\A_E}^{- (s+\frac{1}{2})}\otimes \chi|\ |_{\A_E}^{s - \frac{1}{2} (a-1)} \otimes \sigma
\rightarrow
\Ind_{Q_{1} (\A)}^{G (X_1) (\A)}  \chi|\ |_{\A_E}^{s - \frac{1}{2} (a-1)} \otimes \sigma.
\end{align}
The image of $\Ind_{Q_a (\A)}^{G (X_a) (\A)} \chi |\det |_{\A_E}^s \otimes \sigma$ under this restriction map in $\Ind_{Q_{1} (\A)}^{G (X_1) (\A)}  \chi|\ |_{\A_E}^{s - \frac{1}{2} (a-1)} \otimes \sigma$ is dense. Consider now the two intertwining operators associated to $w_\varnothing$ for $G (X_1)$ and $G (X_a)$ respectively. Then we have a commutative diagram:
\begin{equation*}
  \xymatrix {
\Ind_{Q_a (\A)}^{G (X_a) (\A)} \chi |\det |_{\A_E}^s \otimes \sigma
\ar [r]  \ar [d]^{M (w_\varnothing,s)}
& \Ind_{Q_{1} (\A)}^{G (X_1) (\A)}  \chi|\ |_{\A_E}^{s - \frac{1}{2} (a-1)} \otimes \sigma
\ar [d]^{M (w_\varnothing,s- \frac{1}{2} (a-1))}\\
\Ind_{Q_a (\A)}^{G (X_a) (\A)} \chi |\det |_{\A_E}^s \otimes \sigma \ar [r] &
\Ind_{Q_{1} (\A)}^{G (X_1) (\A)}  \chi|\ |_{\A_E}^{s - \frac{1}{2} (a-1)} \otimes \sigma
},
\end{equation*}
where the top horizontal arrow is the composed map of  \eqref{eq:inclusion}, \eqref{eq:intertwining-w1}  \eqref{eq:intertwining-w} and \eqref{eq:restriction} and the bottom horizontal arrow is the composition of an inclusion map with \eqref{eq:restriction}.
Since we have assumed that the right vertical arrow produces a pole at $s=s_{\text {max}} + \frac{1}{2} (a-1)$, the left vertical arrow must also produce a pole at $s=s_{\text {max}} + \frac{1}{2} (a-1)$. Thus we have shown $s=s_{\text {max}} + \frac{1}{2} (a-1)\in \cP_a (\sigma,\chi)$.
\end{proof}
Now we relate the poles of the partial $L$-function $L^S (s,\sigma\times \chi)$ to those of Eisenstein series.
\begin{prop} \label{prop:L-pole->Eis-pole}
Assume one of the followings.
  \begin{itemize}
  \item The partial $L$-function $L^S (s,\sigma\times \chi)$  has a pole at $s= s_0 > 1/2$  and is holomorphic for $\Re ( s) > s_0$.
\item  The partial $L$-function $L^S (s,\sigma\times \chi)$ is non-vanishing at $s=s_0=1/2$ and is holomorphic for $\Re (s) > 1/2$. In addition $m-\epsilon_\chi$ is even.
  \end{itemize}
  Then for all integers $a\ge 1$,  $s=s_0+\frac{1}{2} (a-1) \in \cP_a (\sigma,\chi)$.
\end{prop}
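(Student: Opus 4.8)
The plan is to reduce at once to the case $a=1$ by invoking Proposition~\ref{prop:pole1->polea}, and then to locate the poles of $E^{Q_1}(g,s,\phi)$ by inspecting its constant term along $Q_1$. Concretely, it suffices to show that, under either hypothesis, $\cP_1(\sigma,\chi)$ is non-empty with maximal member $s_0$ (which equals $1/2$ in the second case); Proposition~\ref{prop:pole1->polea} then delivers $s=s_0+\tfrac12(a-1)\in\cP_a(\sigma,\chi)$ for every $a\ge1$.

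Since $Q_1$ is maximal and $w_\varnothing M_1w_\varnothing^{-1}=M_1$, the constant term of $E^{Q_1}(g,s,\phi)$ along $Q_1$ is
\begin{equation*}
  E^{Q_1}_{Q_1}(g,s,\phi)=\phi_s(g)+M(w_\varnothing,s)\phi_s(g),
\end{equation*}
and, by the cuspidality of $\sigma$, the Eisenstein series $E^{Q_1}(g,s,\phi)$ is concentrated on $Q_1$, so that for $\Re(s)>0$ its poles are precisely those of $M(w_\varnothing,s)\phi_s$. Factoring $M(w_\varnothing,s)=\bigotimes_vM_v(w_\varnothing,s)$ and applying the Langlands--Shahidi normalisation place by place, I would record the identity
\begin{equation*}
  M(w_\varnothing,s)=\frac{L^S(s,\sigma\times\chi)\;L^S(2s,\eta)}{L^S(s+1,\sigma\times\chi)\;L^S(2s+1,\eta)}\;\bigotimes_v N_v(w_\varnothing,s),
\end{equation*}
up to finitely many local $L$- and $\varepsilon$-factors that are holomorphic and nonvanishing near the point of interest, where $\eta$ is the quadratic Hecke character $\varepsilon_{E/F}^{\,\epsilon_\chi+m}$ of $\A^\times$ arising from the central part of the unipotent radical $N_1$ in the Langlands--Shahidi formalism --- in particular $\eta$ is trivial precisely when $m-\epsilon_\chi$ is even --- and the $N_v(w_\varnothing,s)$ are the normalised local intertwining operators. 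The latter are holomorphic on the relevant right half-plane, and for a section $\phi=\otimes_v\phi_v$ that is unramified outside a finite set $S$ and chosen suitably at the places of $S$ they do not vanish at the point under consideration; consequently, for $\Re(s)>0$, the poles of $E^{Q_1}(g,s,\phi)$ as $\phi$ varies are among the poles of the displayed $L$-ratio, and each such pole that is not cancelled there is attained.

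It then remains to analyse the $L$-ratio at the relevant point in each case. In the first case, at $s=s_0>\tfrac12$ the numerator has a pole because $L^S(s,\sigma\times\chi)$ does, while $L^S(2s_0,\eta)$ lies in the region of absolute convergence of its Euler product (as $2s_0>1$) and so is finite and nonzero, and $L^S(s_0+1,\sigma\times\chi)$ and $L^S(2s_0+1,\eta)$ are finite by the holomorphy hypothesis and by $2s_0+1>1$; hence the $L$-ratio has a pole at $s_0$, so $s_0\in\cP_1(\sigma,\chi)$, and the same holomorphy and (standard) nonvanishing bookkeeping on the half-plane $\Re(s)>s_0$ shows that $s_0$ is its maximal member. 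In the second case $\eta$ is trivial, so $L^S(2s,\eta)$ is the incomplete Dedekind zeta function and has a simple pole at $s=\tfrac12$; since $L^S(\tfrac12,\sigma\times\chi)\neq0$ by hypothesis and $L^S(\tfrac32,\sigma\times\chi)$, $L^S(2,\eta)$ are finite and nonzero, the $L$-ratio again has a simple pole at $s=\tfrac12$, which by the same argument is the maximal member of $\cP_1(\sigma,\chi)$. In either case Proposition~\ref{prop:pole1->polea} now finishes the proof.

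The step I expect to be the main obstacle is the precise bookkeeping in the normalisation identity: correctly identifying the two Langlands--Shahidi $L$-functions attached to the adjoint action of $M_1$ on the unipotent radical of $Q_1$ in $G(X_1)$ --- in particular verifying that the factor $L^S(2s,\eta)$ is $L^S(2s,\varepsilon_{E/F}^{\,\epsilon_\chi+m})$, which is exactly what couples the parity condition ``$m-\epsilon_\chi$ even'' to the zeta pole at $s=\tfrac12$ --- and, closely related, checking that the normalised local intertwining operators $N_v(w_\varnothing,s)$ are holomorphic and can be arranged to be nonvanishing at $s=s_0$ (respectively at the edge point $s=\tfrac12$), which is where local harmonic analysis at the ramified places enters. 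The remaining ingredients --- the constant-term formula, the concentration of $E^{Q_1}$ on $Q_1$, the nonvanishing of Hecke and Rankin--Selberg type $L$-functions on the appropriate right half-planes, and the reduction via Proposition~\ref{prop:pole1->polea} --- are standard.
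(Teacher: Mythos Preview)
Your identification of the Langlands--Shahidi factors is correct: the Gindikin--Karpelevich $c$-function of $M(w_\varnothing,s)$ for $a=1$ is
\[
\frac{L^S(s,\sigma\times\chi)\,L_F^S(2s,\eta)}{L^S(s+1,\sigma\times\chi)\,L_F^S(2s+1,\eta)},\qquad \eta=\varepsilon_{E/F}^{\,m+\epsilon_\chi},
\]
matching the paper's product $I_1 I_3$ with $a=1$ (where $I_2$ is empty), and your observation that $\eta$ is trivial exactly when $m-\epsilon_\chi$ is even is the right coupling of the parity hypothesis to the zeta pole at $s=\tfrac12$.

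The paper, however, takes a different route: rather than reduce to $a=1$ and feed the result into Proposition~\ref{prop:pole1->polea}, it runs the Gindikin--Karpelevich computation directly for arbitrary $a$, obtaining the explicit ratio $I_1\cdot I_2\cdot I_3$ and checking it has a pole at $s_0+\tfrac12(a-1)$ in each of the two cases. Your reduction is more economical but incurs an extra burden: Proposition~\ref{prop:pole1->polea} applies only to the \emph{maximal} element of $\cP_1(\sigma,\chi)$, so you must show that $E^{Q_1}$ is holomorphic on the entire half-plane $\Re(s)>s_0$ --- which, beyond the holomorphy of the normalised local operators you already flag, also requires nonvanishing of $L^S(s+1,\sigma\times\chi)$ there, a piece of information not contained in the hypotheses. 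The paper's direct argument sidesteps this maximality question: it only needs the single $c$-function for $w_\varnothing$ to have a pole at the target point, since the constant-term contributions indexed by distinct $w_T$ carry distinct exponents and so cannot cancel. If you do secure the maximality, your route delivers the bonus that $s_0+\tfrac12(a-1)$ is itself maximal in $\cP_a(\sigma,\chi)$, which the paper does not assert here.
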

\begin{proof}
  Let $S$ be a finite set of places of $F$ such that for $v$ away from $S$, $v\nmid 2, \infty$, $E/F$, $G (X_a)$, $\chi$ and $\sigma$ are unramified. We focus on the unramified computation for $M (w_\varnothing,s)$ which is defined in the proof of Prop.~\ref{prop:pole1->polea}.

By the Gindikin-Karpelevich formula or more concretely by \eqref{eq:intertwining-w1} and \eqref{eq:intertwining-w} we find that the $c$-function is given by
\begin{align*}
  &\prod_{1\le j\le a}\frac{L^S (s-\frac{1}{2} (a-1)+j-1,\sigma\times\chi)}
  {L^S (s-\frac{1}{2} (a-1)+j,\sigma\times\chi)}\\
  \cdot&\prod_{1 \le i<j\le a} \frac{L_E ^S(2s- (a-1)+i+j-2)}{L_E ^S(2s- (a-1)+i+j-1)}\\
\cdot&
\begin{cases}
  \prod_{1\le j\le a}\frac {L_F^S (2s- (a-1)+2j-2,\varepsilon_{E/F}\chi)}
  {L_F^S (2s- (a-1)+2j-1,\varepsilon_{E/F}\chi)} & \text {for $m$ odd;}\\
\prod_{1\le j\le a}\frac {L_F^S (2s- (a-1)+2j-2,\chi)}
  {L_F^S (2s- (a-1)+2j-1,\chi)} & \text {for $m$ even.}
\end{cases}
\end{align*}
Here $L^S_F$ is the partial Tate $L$-function for $F$ away from $S$ and $L^S_E$ is the partial Tate $L$-function for $E$ away from places of $E$ lying above those in $S$.
This can be simplified to $I_1\cdot I_2 \cdot I_3$ where
\begin{align*}
  I_1 &=  \frac{L^S (s-\frac{1}{2} (a-1),\sigma\times\chi)}
  {L^S (s+\frac{1}{2} (a+1),\sigma\times\chi)}\\
  I_2 &= \prod_{2\le j \le a}\frac{L_E^S (2s- (a-1)+j-1)}{L_E^S (2s -  (a-1)+2j-2)}\\
I_3 &=
\begin{cases}
  \prod_{1\le j\le a}\frac {L_F^S (2s- (a-1)+2j-2,\varepsilon_{E/F}\chi)}
  {L_F^S (2s- (a-1)+2j-1,\varepsilon_{E/F}\chi)} & \text {for $m$ odd;}\\
\prod_{1\le j\le a}\frac {L_F^S (2s- (a-1)+2j-2,\chi)}
  {L_F^S (2s- (a-1)+2j-1,\chi)} & \text {for $m$ even.}
\end{cases}
\end{align*}

Assume the first condition. At $s=s_0 + \frac{1}{2} (a-1)$, the denominators  cannot have a pole and the numerators cannot have a zero.
The rest of the intertwining operators will not produce poles at $s=s_0 + \frac{1}{2} (a-1)$ since $s_0$ is `maximal'.
Thus the term corresponding to $M (w_\varnothing,s)$ is the only term that can contribute a pole at $s=s_0 + \frac{1}{2} (a-1) $ to
$E^{Q_a}_{Q_a'} (g,s,\phi)$. Therefore, $s_0 + \frac{1}{2} (a-1)$ belongs to $\cP_a (\sigma,\chi)$.

Assume the second condition. Then $I_1$ and $I_2$  are holomorphic and non-vanishing at $s=\frac {1} {2} + \frac{1}{2} (a-1)$ whereas the numerators of $I_3$  produce a pole and the denominators of $I_3$ are holomorphic at $s=\frac{1}{2}$. Again the rest of the intertwining operators will not produce poles at $s=\frac {1} {2} + \frac{1}{2} (a-1)$. Thus $\frac {1} {2} + \frac{1}{2} (a-1)$ belongs to $\cP_a (\sigma,\chi)$.
\end{proof}

Now we recall the following relation between a Siegel Eisenstein series on the doubled group $G (W_a)$ and $E^{Q_a}$ from \cite [Prop.~5.6, Lemma~5.8] {MR3071813}. We use $\sigma\otimes\chi^{-1}$ instead of $\sigma$ in the statement here to make $\Phi_{ f,s}(g_a)$ lie in $\cA_{a} (s,\chi, \sigma)$. A minor typo in the statement of the absolute convergence range is corrected here.
\begin{prop}\label{prop:EPa-EQa}
Let $\Phi_s$ be a section of $\Ind_{P_a(\A)}^{G(W_a)(\A)}\chi |\det |_{\A_E}^s$, which is $K_{G(W_a)}$-finite,
and $f\in\sigma\otimes\chi^{-1}$. Define
\begin{equation}\label{eq:Phi_fs}
  \Phi_{f, s}(g_a):= \int_{G(X)(\A)}  \Phi_s(\iota( g_a,g))f (g) dg.
\end{equation}
Then
\begin{enumerate}
\item $\Phi_{ f,s}(g_a)$ is absolutely convergent for $\Re s > ( m +a)/2$ and
has meromorphic continuation to the whole $s$-plane;
\item it is a
$K_{G(X_a)}$-finite section in $\cA_{a} (s,\chi, \sigma)$ and
\item we have
  \begin{equation*}
    \int_{[G(X)]} E^{P_a}(\iota(g_a,g),s,\Phi) f(g) dg = E^{Q_a}(g_a,s,\Phi_f).
  \end{equation*}
\end{enumerate}
\end{prop}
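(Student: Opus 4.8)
This is the pullback, or ``doubling'', identity of Piatetski--Shapiro--Rallis, here in the extended form recorded in \cite{MR3071813}; the plan I would follow is the standard one. Throughout one works in the range $\Re s\gg 0$ where every integral and series below converges absolutely, and the assertions for general $s$ then follow by meromorphic continuation.

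\emph{Parts (1) and (2).} For (1), the plan is to majorize $|\Phi_s(\iota(g_a,g))|$ using the Iwasawa decomposition of $G(W_a)(\A)$ relative to $P_a$: restricting a $K_{G(W_a)}$-finite section to $\iota_2(G(X)(\A))$ and left--translating by the fixed $\iota_1(g_a)$, one bounds it, up to a constant depending continuously on $g_a$, by a suitable power of a height function; forming $\sum_{\gamma\in G(X)(F)}$ and folding $\int_{G(X)(\A)}$ into $\int_{[G(X)]}$ against the cusp form $f$ (which is rapidly decreasing there), a Godement-type estimate gives absolute convergence exactly for $\Re s>(m+a)/2=\rho_{P_a}$, locally uniformly in $g_a$. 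For meromorphic continuation I would factor $\Phi_{f,s}$ into local doubling zeta integrals, rational in $q_v^{-s}$ at the finite places and meromorphic at the archimedean ones (Piatetski--Shapiro--Rallis; Li, \cite{MR1166512}); alternatively it follows \emph{a posteriori} from (3) and the meromorphic continuation of $E^{P_a}$. For (2), right $K_{G(X_a)}$-finiteness is inherited from $\Phi_s$. The $Q_a$-equivariance rests on the observations that $\iota_1(m(x,I_X))$ and $\iota_1(n)$ for $n\in N_a$ lie in $P_a$ with Levi determinant $\det x$ and $1$ respectively, together with $\rho_{P_a}=\rho_{Q_a}=(m+a)/2$; left $G(X)(F)$-invariance of $\Phi_{f,s}$ comes from the automorphy of $f$. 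The key point for property (iii) is the identity $\iota_1(m(I_a,h))=\iota(m(I_a,h),h)\,\iota_2(h)^{-1}$ with $\iota(m(I_a,h),h)\in P_a$ of Levi determinant $\det h$, a norm-one idele, so $|\det h|_{\A_E}=1$. Commuting $\iota_2(h)^{-1}$ past $\iota_1(g_a)$, changing variables $g\mapsto hg$, and writing $f=(\chi^{-1}\circ\det)f_0$ with $f_0\in\sigma$, one gets
\begin{equation*}
  \Phi_{f,s}(m(I_a,h)g_a)=\int_{G(X)(\A)}\Phi_s(\iota(g_a,g))\,\chi(\det g)^{-1}f_0(hg)\,dg,
\end{equation*}
which is the value at $h$ of the image of $f_0$ under the integrated right regular action against the kernel $g\mapsto\Phi_s(\iota(g_a,g))\chi(\det g)^{-1}\in L^1(G(X)(\A))$; since $\sigma$ is a closed subrepresentation this lies in the space of $\sigma$. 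This is exactly why one must twist $\sigma$ by $\chi^{-1}$.

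\emph{Part (3).} In the convergent range, substitute $E^{P_a}(\iota(g_a,y),s,\Phi)=\sum_{\gamma\in P_a(F)\backslash G(W_a)(F)}\Phi_s(\gamma\iota(g_a,y))$, interchange sum and integral, and decompose $P_a(F)\backslash G(W_a)(F)$ into orbits for the right action of $\iota(G(X_a)(F)\times G(X)(F))$ (using $G(X')=G(X)$). By the Bruhat/Witt-type classification of these double cosets (as in the doubling analysis of Kudla--Rallis, \cite{MR1289491}; see \cite{MR3071813} for the extended setup) there are finitely many, of which exactly one --- the one containing the identity --- is non-negligible: its stabilizer in $\iota(G(X_a)(F)\times G(X)(F))$ is $\{(q,\pr_{G(X)}(q)):q\in Q_a(F)\}$, where $\pr_{G(X)}$ is the $G(X)$-component of the Levi projection, so the orbit is in bijection with $(Q_a(F)\backslash G(X_a)(F))\times G(X)(F)$. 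Using $\iota(g_a',h)\iota(g_a,y)=\iota(g_a'g_a,hy)$ and the left $G(X)(F)$-invariance of $f$, the sum over the $G(X)(F)$-factor folds $\int_{[G(X)]}$ into $\int_{G(X)(\A)}$, so this orbit contributes $\sum_{\gamma_0\in Q_a(F)\backslash G(X_a)(F)}\Phi_{f,s}(\gamma_0 g_a)=E^{Q_a}(g_a,s,\Phi_f)$. Every other orbit is negligible: its stabilizer contains $\{1\}\times N_P(F)$ for $N_P$ the unipotent radical of a proper parabolic $F$-subgroup of $G(X)$, so the corresponding inner integral over $[G(X)]$ contains a factor $\int_{[N_P]}f(n\,\cdot)\,dn=0$ by cuspidality of $f$, and contributes nothing. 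This establishes (3) for $\Re s\gg0$, hence everywhere.

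\emph{Main obstacle.} The technical heart is the orbit geometry of Part (3): enumerating the double cosets $P_a(F)\backslash G(W_a)(F)/\iota(G(X_a)(F)\times G(X)(F))$, computing the stabilizer of the identity orbit precisely enough that it reproduces the summation defining $E^{Q_a}$, and verifying that the stabilizer of every other orbit genuinely contains the unipotent radical of a proper $F$-parabolic of $G(X)$. The convergence estimate, the equivariance bookkeeping in (2), and the interchanges of summation and integration are routine once this combinatorial input is in place.
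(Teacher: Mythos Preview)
The paper does not give a proof of this proposition at all: it is simply quoted from \cite[Prop.~5.6, Lemma~5.8]{MR3071813}, with the remark that $\sigma\otimes\chi^{-1}$ replaces $\sigma$ so that $\Phi_{f,s}$ lands in $\cA_a(s,\chi,\sigma)$, and that a typo in the convergence range has been corrected. So there is no in-paper argument to compare against.

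That said, your sketch is exactly the standard doubling/pullback argument one expects to find in the cited reference, and the outline is sound. The equivariance computation in (2), in particular the identity $\iota_1(m(I_a,h))=\iota(m(I_a,h),h)\,\iota_2(h)^{-1}$ and the resulting appearance of the factor $\chi(\det h)$ (cancelled by the twist $\sigma\otimes\chi^{-1}$), is precisely the reason the paper highlights for the twist. Your treatment of (3) via the double-coset decomposition $P_a(F)\backslash G(W_a)(F)/\iota(G(X_a)(F)\times G(X)(F))$, with one non-negligible orbit whose stabilizer is the graph of $Q_a(F)\to G(X)(F)$ and all other orbits killed by cuspidality, is the correct mechanism. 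One small point: you assert that the negligible stabilizers contain $\{1\}\times N_P(F)$ for $N_P$ the unipotent radical of a proper parabolic of $G(X)$; in the extended setup with $a>0$ this is the statement that needs the most care (and is where \cite{MR3071813} does the work beyond the original Piatetski--Shapiro--Rallis case $a=0$), so in a full write-up you should either carry out that orbit analysis or cite it precisely.
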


Now we show that the sections of the form $\Phi_{ f,s}$ are enough to detect the poles of Eisenstein series.
\begin{lemma}
  Let
  \begin{equation*}
    d^S (s) = \prod_{0 \le j < \frac{m}{2}} L_F^S (2s+m-2j,\chi) \prod_{0<j\le \frac{m}{2}} L_F^S (2s+m-2j+1,\varepsilon_{E/F}\chi).
  \end{equation*}
Then
\begin{equation*}
  d^S (s+ \frac{1}{2} a) ( L^S (s+\frac{1}{2} (1+a),\sigma \times \chi))^{-1}\Phi_{ f,s}(g_a)
\end{equation*}
is a holomorphic section of $\cA_{a} (s,\chi, \sigma)$.
\end{lemma}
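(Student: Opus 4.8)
The plan is to recognize $\Phi_{f,s}$ as the doubling zeta integral attached to $f$ and to bound its poles place by place. By linearity we may assume $\Phi_s=\otimes_v\Phi_{s,v}$ and $f=\otimes_v f_v$ are $K$-finite and factorizable. Since a section of $\cA_a(s,\chi,\sigma)$ is determined by its restriction to the good maximal compact $K_a=\prod_v K_{a,v}$ through the Iwasawa decomposition $G(X_a)(\A)=Q_a(\A)K_a$ and the $M_a$-transformation law, which is holomorphic in $s$, it suffices to prove the asserted holomorphy of $s\mapsto\Phi_{f,s}(k)$ for a fixed $k=\otimes_v k_v\in K_a$. Unfolding \eqref{eq:Phi_fs} over $G(X)(\A)=\prod_v' G(X)(F_v)$ then gives, for $\Re s>(m+a)/2$ by Proposition~\ref{prop:EPa-EQa}(1),
\begin{equation*}
  \Phi_{f,s}(k)=\prod_v Z_v(s;k_v),\qquad Z_v(s;k_v):=\int_{G(X)(F_v)}\Phi_{s,v}(\iota(k_v,g))\,f_v(g)\,dg,
\end{equation*}
each $Z_v$ continued meromorphically to $\C$; this is the standard unfolding of the doubling method of Piatetski-Shapiro and Rallis (for unitary groups, see \cite{MR1166512}).

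First I would dispose of the places $v\notin S$, where $\Phi_{s,v}$ is $G(W_a)(F_v)$-spherical so that $Z_v(s;k_v)$ is independent of $k_v\in K_{a,v}$. The local integral is then evaluated by the Gindikin--Karpelevich computation --- essentially the one already performed for $M(w_\varnothing,s)$ in the proof of Proposition~\ref{prop:L-pole->Eis-pole}, and also in \cite{MR1166512} --- and comes out equal to $L_v(s+\tfrac{1+a}{2},\sigma_v\times\chi_v)/d_v(s+\tfrac a2)$, where $d_v$ is the local Euler factor of $d^S$ and the shift by $\tfrac a2$ is the contribution of the $2a$-dimensional split space $\cH_a$. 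Multiplying over $v\notin S$,
\begin{equation*}
  \prod_{v\notin S}Z_v(s)=\frac{L^S(s+\tfrac{1+a}{2},\sigma\times\chi)}{d^S(s+\tfrac a2)}.
\end{equation*}

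It remains to treat the finitely many $v\in S$. There $\Phi_{s,v}$ is a holomorphic (standard, $K$-finite) section of $\Ind_{P_a(F_v)}^{G(W_a)(F_v)}\chi_v|\det|_v^s$, and the decisive input is that for such a section the local doubling zeta integral $Z_v(s;k_v)$ is holomorphic in $s$ on all of $\C$; this is a substantive part of the local theory of the doubling method (Piatetski-Shapiro--Rallis, Lapid--Rallis, Yamana; for unitary groups see also \cite{MR1289491} and \cite{gan14:_regul_siegel_weil_formul_secon}). Granting it and combining with the previous paragraph,
\begin{equation*}
  d^S(s+\tfrac a2)\,\bigl(L^S(s+\tfrac{1+a}{2},\sigma\times\chi)\bigr)^{-1}\,\Phi_{f,s}(k)=\prod_{v\in S}Z_v(s;k_v)
\end{equation*}
is holomorphic in $s$, and, being the $K_a$-restriction of a section of $\cA_a(s,\chi,\sigma)$ by Proposition~\ref{prop:EPa-EQa}(2), it is a holomorphic section of $\cA_a(s,\chi,\sigma)$. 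The main obstacle is precisely this ramified input --- holomorphy of the local doubling integral for holomorphic sections --- which is the genuine external ingredient; the unramified identity, though it requires care to match the abelian $L$-functions exactly with $d^S$ and to pin down the half-integral shifts, is routine.
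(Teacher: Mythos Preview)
Your argument follows the paper's: recognize $\Phi_{f,s}$ as the (extended) doubling zeta integral, factor locally, invoke Li's unramified computation \cite{MR1166512} with the shift $s\mapsto s+\tfrac a2$, and appeal to the local doubling theory at the bad places. Two points deserve correction. First, an automorphic form $f$ does not factor as a function on $G(X)(\A)$, so your identity $\Phi_{f,s}(k)=\prod_v Z_v(s;k_v)$ with $Z_v=\int \Phi_{s,v}(\iota(k_v,g))f_v(g)\,dg$ is not well-posed as written; the paper handles this by pairing against a vector $f'_v$ in the contragredient, so that the integrand becomes a matrix coefficient $\langle\sigma_v(g)f_v,f''_v\rangle$, which genuinely factors. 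Second, the unramified doubling integral is \emph{not} evaluated by Gindikin--Karpelevich---that formula computes the intertwining operator $M(w_\varnothing,s)$ on $G(X_a)$, a different object---but by Li's direct computation \cite[Thm.~3.1]{MR1166512}, which you also cite. The paper's added content is the explicit reduction from $a>0$ to $a=0$: using $K_{G(W_a)}$-finiteness one reduces to $g_a$ in the Levi of $P_a$, the $R_{E/F}\GL_a$-part contributes only a holomorphic factor, and the $G(X)$-part is the $a=0$ integral with the shift $a/2$ read off from $\rho_{P_a}-\rho_P$; you simply assert this shifted formula.
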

\begin{proof}
Let $v$ be  a place of $F$. Assume that $f$ and $\Phi$ are decomposable. In order to see the normalising factor for $\Phi_{f,s} (g_a)$ we need to compute
\begin{equation*}
  \int_{G(X)(F_v)}  \form{\Phi_{v, s}(\iota(g_a,g))\sigma_v\otimes\chi_v^{-1} (g)f_v}{f'_v} dg
\end{equation*}
for all $f'_v$ in the dual of $\sigma_v$, or equivalently
\begin{equation*}
  \int_{G(X)(F_v)}  \Phi_{v, s}(\iota_1(g^{-1} g_a))\form{\sigma_v\otimes\chi_v^{-1}  (g)f_v}{f''_v} dg
\end{equation*}
for all $f''_v$ in the dual of $\sigma_v\otimes\chi_v^{-1}$.  Let $S$ be a finite set of places of $F$ away from which  $G (X)_v$, $\sigma_v$ and $\chi_v$ are unramified and $\Phi_v$, $f_v$ and $f_v''$ are spherical.

  For $a=0$, the unramified computation for unitary group was done in \cite [Thm.~3.1] {MR1166512} which follows the line of \cite [Part A, Sec.~6] {MR892097}.

  Consider the case where $a>0$. Fix $v\not\in S$ and suppress it from notation. Since $\Phi$ is $K_{G (W_a)}$-finite, we only need to consider holomorphicity for $g_a\in P_a (F_v)$. Then since $N_{P_a} (F_v)$ acts trivially on $\Phi$ by left translation, we may as well consider $g_a\in M_{P_a} (F_v)$ which is isomorphic to $R_{E/F}\GL_a (F_v)\times G (X) (F_v)$. When $g_a \in R_{E/F}\GL_a (F_v)$, the left translation of $g_a$ on $\Phi$ gives the factor $\chi|\ |_{E_v}^{s}\rho_{P_a} (g_a)$ which has no impact on holomorphicity. Thus we may assume $g_a\in G (X) (F_v)$. Then the problem is reduced to the case where $a=0$. Note that the difference between $\rho_P$ and $\rho_{P_a}$ gives a shift in $s$ by $a/2$.  Thus we prove our statement.
\end{proof}
\begin{lemma}\label{lemma:approx}
  For any $s_1 \in \R$, if $a$ is large enough, then for $\Re (s)>s_1$,
  every section in $\cA_a (s, \chi,\sigma)$ can be approximated by sections of the form $\Phi_{f,s}$ in \eqref{eq:Phi_fs}.
\end{lemma}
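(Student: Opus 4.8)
The plan is to reduce the statement to a local density assertion at each place of $F$, to prove that assertion by a Bruhat (Mackey) analysis of the restriction of the degenerate principal series of $G(W_a)$ to $G(X_a)\times G(X')$ via $\iota$, and then to reassemble globally using the restricted tensor product structure of $\cA_a(s,\chi,\sigma)$ together with the unramified computation of the preceding lemma. Throughout, $s$ is fixed with $\Re(s)>s_1$ and $S$ is a finite set of places outside of which all data are unramified.

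First I would carry out the reduction. Write $\cA_a(s,\chi,\sigma)\cong{\bigotimes_v}'\,\cA_{a,v}(s,\chi,\sigma)$ with $\cA_{a,v}(s,\chi,\sigma)=\Ind_{Q_a(F_v)}^{G(X_a)(F_v)}\chi_v|\det|_{E_v}^s\otimes\sigma_v$, the restricted product taken with respect to the normalized spherical sections for $v\notin S$. For decomposable data one has $\Phi_{f,s}=\bigotimes_v\Phi_{f_v,s}$, where $\Phi_{f_v,s}(g_a)=\int_{G(X)(F_v)}\Phi_v(\iota(g_a,g))f_v(g)\,dg$ is continued meromorphically in $s$. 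Hence it is enough to prove: (i) for every $v\in S$ the $\C$-span of the local sections $\Phi_{f_v,s}$ is dense in $\cA_{a,v}(s,\chi,\sigma)$; and (ii) for every $v\notin S$ the normalized spherical section of $\cA_{a,v}(s,\chi,\sigma)$ lies in that span. Statement (ii) is the content of the preceding lemma read place by place: $\Phi_{f_v^0,s}$ equals the spherical section of $\cA_{a,v}(s,\chi,\sigma)$ times a ratio of local Tate factors coming from $d^S(s+\tfrac a2)$ and $L^S(s+\tfrac{1+a}{2},\sigma\times\chi)$. Once $a$ is taken large enough in terms of $s_1$, the arguments $s+\tfrac a2$ and $s+\tfrac{1+a}{2}$ lie in the region of absolute convergence of those Euler factors for every $s$ with $\Re(s)>s_1$, so the ratio is finite and nonzero; this is the principal use of the hypothesis that $a$ be large, and by the same lemma it also makes each $\Phi_{f,s}$ a holomorphic section on $\Re(s)>s_1$.

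Next I would prove the local density (i). Fix $v\in S$ and suppress it. By Proposition~\ref{prop:EPa-EQa} the map $(\Phi,f)\mapsto\Phi_{f,s}$ factors through the restriction of $\mathcal I_a:=\Ind_{P_a(F_v)}^{G(W_a)(F_v)}\chi_v|\det|^s$ to $\iota(G(X_a)(F_v)\times G(X')(F_v))$, followed by the $G(X')(F_v)$-equivariant pairing against $f\in\sigma_v\otimes\chi_v^{-1}$. I would stratify $\mathcal I_a|_{G(X_a)\times G(X')}$ by the orbits of $G(X_a)\times G(X')$ on $P_a\backslash G(W_a)$. By Witt's theorem there is a unique open orbit, represented by the Lagrangian $X^\Delta_a=X^\Delta\oplus\ell_a^+$ of $W_a$, and a direct computation identifies its stabilizer with
\begin{equation*}
  R=\{(n\,m(x,h),\,h)\ :\ n\in\overline{N}_a(F_v),\ x\in R_{E/F}\GL_a(F_v),\ h\in G(X)(F_v)\}\subset G(X_a)(F_v)\times G(X')(F_v),
\end{equation*}
that is, the parabolic $\overline{Q}_a$ of $G(X_a)$ opposite to $Q_a$ with its $G(X)$-Levi factor glued diagonally onto $G(X')$. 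Thus the open-orbit summand of the restriction is, after the standard modulus normalization, the compact induction $\ind_{R}^{G(X_a)(F_v)\times G(X')(F_v)}\bigl(\chi_v|\det|^s\cdot(\mathrm{modulus})\bigr)$; since the orbit is open we may take $\Phi$ supported in the open cell, so every vector of this module is attained. Inducing in stages — first along $\overline{Q}_a$, then observing that the $G(X')$-factor is the regular representation of $G(X)(F_v)$ — and applying the pairing against $f$, which is exactly the projection of that regular representation onto its $\sigma_v$-isotypic quotient, identifies the image with $\Ind_{\overline{Q}_a(F_v)}^{G(X_a)(F_v)}\chi_v|\det|^s\otimes\sigma_v$, which after conjugation by a Weyl element carrying $\overline{Q}_a$ to $Q_a$ is $\cA_{a,v}(s,\chi,\sigma)$. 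This proves (i) for $s$ in the absolute convergence range of \eqref{eq:Phi_fs}, and, combined with the holomorphy of $\Phi_{f,s}$ on $\Re(s)>s_1$ for $a$ large and analytic continuation, density propagates to all $s$ with $\Re(s)>s_1$.

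The hard part will be the local step (i): one must be sure that, after pairing with $f$, the open-orbit summand genuinely surjects onto all of $\sigma_v$ rather than onto a proper $G(X)(F_v)$-subrepresentation, and that the non-open, lower-dimensional orbits occurring in $\mathcal I_a|_{G(X_a)\times G(X')}$ do not obstruct density. The safest way to handle both is to run the ``cut off near the open cell'' argument strictly inside the region of absolute convergence of \eqref{eq:Phi_fs}, where the integral defining $\Phi_{f,s}$ is honest and support arguments are unambiguous, and only afterwards to continue in $s$; an alternative is to show directly that the annihilator of $\{\Phi_{f,s}\}$ in the continuous dual of $\cA_a(s,\chi,\sigma)$ vanishes, which by duality and unfolding reduces to the non-vanishing of the local and global doubling integrals. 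A secondary, purely bookkeeping, point is to track the modulus characters and $\chi_v$-twists through the induction in stages so that the inducing exponent matches $s$ exactly; the shift by $a/2$ between $\rho_{P_a}$ and $\rho_{Q_a}$ noted in the proof of the preceding lemma must be accounted for at this step.
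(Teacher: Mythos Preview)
Your approach is correct and rests on the same geometric fact as the paper's proof --- the open orbit of $G(X_a)\times G(X')$ on $P_a\backslash G(W_a)$ coming from the doubling method --- but the two arguments are organised rather differently. The paper works concretely: it realises $\cA_{a,v}(s,\chi,\sigma)$ as the image of $C_c^\infty(Q_a^0(F_v)\backslash G(X_a)(F_v),V_{\sigma_v})$ under a Mellin-type integral, approximates a $K$-finite element there by a finite sum $\sum_i \xi_i\otimes\nu_i$ with scalar $\xi_i$, extends each $\xi_i$ to $\til\xi_i\in C_c^\infty(P_a^0(F_v)\backslash G(W_a)(F_v))$ via the open dense inclusion $Q_a^0\backslash G(X_a)\hookrightarrow P_a^0\backslash G(W_a)$, builds $\Phi_s^{(i)}$ from $\til\xi_i$, and then checks directly that $\sum_i\Phi^{(i)}_{\nu_i,s}$ recovers the original section. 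Your Mackey-theoretic argument (stratify $\mathcal I_a|_{G(X_a)\times G(X')}$, identify the open-cell summand with compact induction from the stabiliser $R$, induce in stages, pair with $f$) is the abstract explanation of \emph{why} the paper's construction succeeds; it is conceptually cleaner but leaves more bookkeeping implicit (the modulus characters, the $\chi$-twist, and the precise identification of the $G(X')$-integration with a projection onto $\sigma_v$). Note also that your representative $X_a^\Delta$ lies in the same orbit as the identity coset $X_a^\nabla$ the paper implicitly uses, which is why you see $\overline{Q}_a$ where the paper sees $Q_a$; your Weyl conjugation remark handles this, but working with the identity coset from the start avoids that extra step.
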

\begin{proof}
By the previous lemma, when $a$ is large enough, $\Phi_{f,s} (g_a)$ itself is a  holomorphic section in $\cA_a (s, \chi,\sigma)$ for $\Re ( s) > s_1$. 
The problem of approximation is local. Let $v$ be a place of $F$.
Let $Q_a^0$ be the subgroup of $Q_a$ with Levi part isomorphic to $R_{E/F}\SL_a \times \{I_X\}$ and  unipotent part isomorphic to $N_a$.  Similarly define $P_a^0$ to be the subgroup of $P_a$ with Levi part isomorphic to $R_{E/F}\SL_{m+a}$ and  unipotent part isomorphic to $N_{P_a}$.

We have a surjection from the space $C^\infty_c (Q_a^0 (F_v)\lmod G (X_a) (F_v), V_\sigma)$ of  smooth compactly supported functions valued in the space $V_\sigma$ of $\sigma$ to  $\cA_a (s,\chi,\sigma)$ given by
\begin{align}\label{eq:sec-in-E-s-sigma-chi}
    \xi &\mapsto \left(g_a\mapsto
  \int_{E_v^\times \times G (X) (F_v)} \chi^{-1} (t) |t|_{E_v}^{ -s} \rho_{Q_a} (\hat {t})
  \sigma_v (g^{-1}) \xi (\hat {t}gg_a) d^\times t\; dg\right)  ,
\end{align}
where for $t\in E_v^\times$, $\hat {t}$ denotes the element $\diag (t,I_{a-1}) \in R_{E/F}\GL_a (F_v)$. It can be checked that $K_{ a,v}$-finite elements in the space
\begin{equation*}
 C^\infty_c (Q_a^0 (F_v)\lmod G (X_a) (F_v), V_{\sigma,v})
\end{equation*}
can be approximated by elements from the space
\begin{equation*}
  C^\infty_c (Q_a^0 (F_v)\lmod G (X_a) (F_v))_{\text { $K_{ a,v}$-finite}}  \otimes V_{\sigma,v}.
\end{equation*}
Assume that
 $\xi \in C^\infty_c (Q_a^0 (F_v)\lmod G (X_a) (F_v), V_{\sigma,v})_{\text { $K_{ a,v}$-finite}}$ is approximated by a finite sum
 $\sum_i \xi_i \nu_i $ for $\xi_i\in C^\infty_c (Q_a^0 (F_v)\lmod G (X_a) (F_v))_{\text { $K_{ a,v}$-finite}}$ and $\nu_i \in V_{\sigma,v}$.

We note that by an argument in the original doubling method, the image of
\begin{equation*}
  F_v^\times \times Q_a^0 (F_v)\lmod G (X_a) (F_v) \hookrightarrow F_v^\times \times P_a^0 (F_v) \lmod G (W_a) (F_v)
\end{equation*}
is open and dense in the right-hand side in the Zariski topology.
Thus an element in $C^\infty_c (Q_a^0 (F_v)\lmod G (X_a) (F_v))$ can be naturally extended to an element in $C^\infty_c ( P_a^0 (F_v) \lmod G (W_a) (F_v))$. Furthermore $K_a$-finite functions are extended to $K_{G ( W_a)}$-finite functions.
Then we extend $\xi_i$ to $\til {\xi}_i \in C^\infty_c ( P_a^0 (F_v) \lmod G (W_a) (F_v))_{\text { $K_{G ( W_a)}$-finite}}$ from which we can construct a $K_{G ( W_a)}$-finite section $\Phi_s^{(i)}$ of $\Ind_{P_a (F_v)}^{G (W_a) (F_v)} \chi_v|\det|_{E_v}^s$ by
\begin{equation*}
  \Phi_s^{(i)} ( g_a) =
  \int_{E_v^\times} \chi^{-1} (t) |t|_{E_v}^{ -s} \rho_{P_a} (\hat {t})
  \til {\xi_i} (\hat {t}g_a) d^\times t .
\end{equation*}
Thus we see that the right hand side of  \eqref{eq:sec-in-E-s-sigma-chi} can be approximated by
\begin{align*}
  &\sum_i \int_{  G (X) (F_v)} \Phi_s^{(i)} (\iota_1 ( gg_a))
  \sigma_v (g^{-1})\nu_i  dg\\
=&\sum_i \int_{  G (X) (F_v)} \Phi_s^{(i)} (\iota_1 ( g^{-1} g_a)) \sigma_v (g)\nu_i  dg\\
=&\sum_i \int_{  G (X) (F_v)} \Phi_s^{(i)} (\iota ( g_a,g)) \chi^{-1} (g)\sigma_v (g)\nu_i  dg.
\end{align*}
This is what we are looking for.
\end{proof}

\begin{lemma}\label{lemma:section-general-enough}
Assume that $a$ is large enough and that $s_0$ is the maximal element in $\in\cP_1 (\sigma,\chi)$. Then
there exists a section of $\cA_{a} (s,\chi, \sigma) $ of the form $\Phi_{f, s}$ for $\Phi_s\in\Ind_{P_a(\A)}^{G(W_a)(\A)}\chi |\det |^s$ and $f\in \sigma\otimes\chi^{-1}$ such that $E^{Q_a} (g,s,\Phi_{f,s})$ has pole at $s=s_0$.
\end{lemma}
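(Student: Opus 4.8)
The plan is to deduce the statement from the two preceding lemmas together with Proposition~\ref{prop:pole1->polea}, by a density argument. Set $s_\bullet:=s_0+\tfrac12(a-1)$; by Proposition~\ref{prop:pole1->polea} this is the maximal member of $\cP_a(\sigma,\chi)$, and it is the pole we must realize by a section of the form $\Phi_{f,s}$ (for $a=1$ it is $s_0$ itself). By Proposition~\ref{prop:pole1->polea} there is at least one $\phi^\circ\in\cA_a(0,\chi,\sigma)$ for which $E^{Q_a}(g,s,\phi^\circ_s)$ has a pole at $s=s_\bullet$. Let $N\ge1$ be the maximal order of the pole at $s=s_\bullet$ of $E^{Q_a}(g,s,\phi_s)$ as $\phi$ ranges over $\cA_a(0,\chi,\sigma)$; this is finite because the orders of poles of these Eisenstein series are uniformly bounded in $\phi$, and it is $\ge1$ by the previous sentence. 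Then the leading Laurent coefficient at $s=s_\bullet$,
\begin{equation*}
  D\colon\cA_a(0,\chi,\sigma)\longrightarrow\mathcal{A}(G(X_a)(\A)),\qquad D(\phi):=\lim_{s\to s_\bullet}(s-s_\bullet)^N\,E^{Q_a}(g,s,\phi_s),
\end{equation*}
is a well-defined linear map (each $(s-s_\bullet)^N E^{Q_a}(g,s,\phi_s)$ is holomorphic at $s_\bullet$ by the maximality of $N$), and $D$ is not identically zero.

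Next I would check that $D$ is continuous for the natural $C^\infty$-topology on $\cA_a(0,\chi,\sigma)$. For this I would appeal to the analysis in the proof of Proposition~\ref{prop:pole1->polea}: the pole of $E^{Q_a}(g,s,\phi_s)$ at $s=s_\bullet$ is carried entirely by its constant term along $Q_a'$, and there only by the summand $M(w_\varnothing,s)\phi_s$; moreover, near $s_\bullet$ the operator $M(w_\varnothing,s)$ agrees, up to factors that are holomorphic and invertible there, with the rank-one $G(X_1)$-type intertwining operator occurring in its decomposition that is responsible for the pole (the one governed by $L^S(s,\sigma\times\chi)$ and the Tate $L$-factors), and the residue of the latter is a $C^\infty$-continuous operation on the section. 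Hence $D$ is a composition of continuous maps. Alternatively, one may simply invoke the general principle that residues of Eisenstein series depend continuously on the inducing data.

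Finally I would conclude by density. For $a$ large enough the preceding normalizing-factor lemma shows that $\Phi_{f,s}$ is holomorphic in a neighbourhood of $s=s_\bullet$: the only possible poles of $\Phi_{f,s}$ lie among the zeros of $L^S(s+\tfrac12(1+a),\sigma\times\chi)$ and the poles of $d^S(s+\tfrac12 a)$, and both sets avoid a neighbourhood of $s_\bullet$ once $a$ is large, since the $L$-argument there is $s_0+a$, which lies in the region of absolute convergence, while the poles of the Tate factors are pushed into $\Re s\ll0$. Thus the sections $\Phi_{f,0}$ genuinely lie in $\cA_a(0,\chi,\sigma)$. Applying Lemma~\ref{lemma:approx} with its free real parameter set to $s_0$, for $a$ large the sections $\Phi_{f,s}$ are dense in $\cA_a(s,\chi,\sigma)$ for $\Re s>s_0$, hence, via $\phi\mapsto\phi_s$, the sections $\Phi_{f,0}$ are dense in $\cA_a(0,\chi,\sigma)$ (note $s_\bullet>s_0$ for $a\ge2$, so this approximation is valid near $s_\bullet$). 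Since $D$ is continuous and nonzero, $\ker D$ is a proper closed subspace and therefore cannot contain a dense set; hence $D(\Phi_{f,0})\ne0$ for some choice of $\Phi$ and $f$, which says exactly that $E^{Q_a}(g,s,\Phi_{f,s})$ has a pole (of order $N$) at $s=s_\bullet$, as desired.

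The hard part will be the continuity of $D$: one must verify that ``the Eisenstein series has a pole at $s_\bullet$'' is a closed condition on the inducing section, which is precisely what makes the density of Lemma~\ref{lemma:approx} usable. The remaining points are bookkeeping — matching the lower bounds on $a$ demanded by the normalizing-factor lemma and by Lemma~\ref{lemma:approx}, and reading off the location of the maximal pole from Proposition~\ref{prop:pole1->polea}.
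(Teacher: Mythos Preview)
Your proposal is correct and follows the same route as the paper: the paper's proof is the single line ``This follows from Prop.~\ref{prop:pole1->polea} and Lemma~\ref{lemma:approx}.'' You have simply unpacked what this entails --- namely that density of the $\Phi_{f,s}$'s (Lemma~\ref{lemma:approx}) combined with the existence of a pole (Prop.~\ref{prop:pole1->polea}) forces some $\Phi_{f,s}$ to realize the pole --- and you correctly flag that the passage requires the leading-coefficient map $D$ to be continuous, a point the paper leaves implicit. Your reading of the pole location as $s_\bullet=s_0+\tfrac12(a-1)$ rather than the literal $s_0$ is also the intended one, as confirmed by the later uses of the lemma (e.g.\ in the proof of Prop.~\ref{prop:non-siegel-eis-pole}). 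One small simplification: since $Q_a$ is a maximal parabolic, Langlands' theory gives that the poles in $\Re s>0$ are simple, so your $N$ is automatically $1$ and the residue map is just $\phi\mapsto\res_{s=s_\bullet}E^{Q_a}(g,s,\phi_s)$.
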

\begin{proof}
  This follows from  Prop.~\ref{prop:pole1->polea} and Lemma~\ref{lemma:approx}.
\end{proof}

The poles of $E^{P_a} (g,s,\Phi)$ are determined in \cite{MR1692899}.
\begin{prop}\label{prop:Siegel-eis-pole}
  The poles in the right half space $\Re (s) \ge 0$ of $E^{P_a} (g,s,\Phi)$ are at most simple and are contained in the set
  \begin{equation*}
    \Xi_{m+a} (\chi) =\{\frac{1}{2} (m+a-\epsilon_\chi) - j  |  j\in \Z, 0 \le j < \frac{1}{2} (m+a-\epsilon_\chi)\}.
  \end{equation*}
\end{prop}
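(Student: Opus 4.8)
The plan is to reduce the statement to the analytic behaviour of the standard intertwining operator attached to the Siegel parabolic $P_a$ of $G(W_a)$, and to extract the poles from a Gindikin--Karpelevich computation. This is exactly the analysis carried out in \cite{MR1692899}; below I indicate the mechanism and how the set $\Xi_{m+a}(\chi)$ emerges.

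Since $P_a$ is a maximal parabolic subgroup with Levi factor $R_{E/F}\GL_{m+a}$, and the standard section $\Phi_s$ is entire in $s$, the poles of $E^{P_a}(g,s,\Phi)$ in $\Re(s)\ge 0$ coincide with those of its constant term along $P_a$, which by Langlands' theory of Eisenstein series has the form $\Phi_s + M(w_0,s)\Phi_s$, where $w_0$ is the nontrivial Weyl element carrying $P_a$ to its opposite. Hence it is enough to locate the poles of the global intertwining operator $M(w_0,s)$ on the finite-dimensional span of the $K_{G(W_a)}$-types of $\Phi$. Factoring $M(w_0,s)=\bigotimes_v M_v(w_0,s)$, the Gindikin--Karpelevich formula evaluates $M_v(w_0,s)$ on the normalised spherical vector, for $v$ outside a suitable finite set $S$, as a ratio of local abelian $L$-factors; the product over $v\notin S$ therefore contributes a global normalising factor which is a ratio of products of partial Hecke $L$-functions of $F$ attached to $\chi|_{\A^\times}$ and to $\varepsilon_{E/F}\chi|_{\A^\times}$, with arguments linear in $s$ --- the shape being that of the $d^S$ in the preceding Lemma with $m$ replaced by $m+a$. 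At the places $v\in S$ one uses that, once this normalising factor is divided out, the remaining local intertwining operators on $\Ind_{P_a(F_v)}^{G(W_a)(F_v)}\chi_v|\det|_{E_v}^{s}$ are holomorphic for $\Re(s)\ge 0$; this rests on the explicit module structure of these degenerate principal series and is established in \cite{MR1692899}.

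It then remains to read off the poles. A partial Hecke $L$-function $L^S(w,\mu)$ of a unitary Hecke character is holomorphic and non-vanishing for $\Re(w)\ge 1$ except for a single simple pole at $w=1$, which is present precisely when $\mu$ is trivial; the denominator factors of the normalising factor have arguments shifted by $1$ and are hence holomorphic and non-zero in $\Re(s)\ge 0$. Since $\chi|_{\A^\times}=\varepsilon_{E/F}^{\epsilon_\chi}$, exactly one of $\chi|_{\A^\times}$ and $\varepsilon_{E/F}\chi|_{\A^\times}$ is trivial; the numerator factors carrying that character have arguments forming an arithmetic progression of step $2$, and setting such an argument equal to $1$ over the admissible range gives exactly the points $s=\frac{1}{2}(m+a-\epsilon_\chi)-j$ with $0\le j<\frac{1}{2}(m+a-\epsilon_\chi)$, that is, the set $\Xi_{m+a}(\chi)$. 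As the arguments are pairwise distinct, at most one factor is singular at any one of these points, so the corresponding pole is at most simple; the remaining numerator factors are holomorphic there and contribute nothing. This gives the proposition.

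The main obstacle is the local input at the places $v\in S$: one must control the normalised local intertwining operators of these degenerate principal series of $G(W_a)(F_v)$ --- showing holomorphy in $\Re(s)\ge 0$, and that a local pole cannot raise the order of a global pole --- which is the substantive part of \cite{MR1692899} and, at the archimedean places, depends on the explicit structure of the degenerate principal series.
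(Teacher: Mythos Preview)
Your proposal is correct and in fact goes further than the paper itself: the paper does not give any argument for this proposition, merely recording that ``the poles of $E^{P_a}(g,s,\Phi)$ are determined in \cite{MR1692899}.'' Your sketch accurately reconstructs the mechanism of that reference --- reduction to the constant term, Gindikin--Karpelevich computation of $M(w_0,s)$, identification of the global normalising factor as a product of partial Hecke $L$-functions for $\chi|_{\A^\times}$ and $\varepsilon_{E/F}\chi|_{\A^\times}$, and the local input that the normalised operators are holomorphic in $\Re(s)\ge 0$ --- and correctly reads off the set $\Xi_{m+a}(\chi)$ from the location of the simple pole of the trivial-character Hecke $L$-function.
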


Assume that $\epsilon_\chi=0$. We may associate to $\chi$ a character $\nu_\chi$ which at each local place is a character of $E_v^1$ defined by
\begin{equation}\label{eq:nu_chi}
  \nu_{\chi, v} (x/\bar {x}) = \chi_v (x).
\end{equation}
\begin{prop} \label{prop:non-siegel-eis-pole}
Let $\sigma\in\CA_\cusp(G(X))$ and $s_0$ be the maximal element in $\cP_1  (\sigma,\chi)$. Then the following hold.
  \begin{enumerate}
  \item  We have
  $s_0 = \frac{1}{2} (m+1-\epsilon_\chi)-j$ for
  some integer $j$ such that $0 \le j < \frac{1}{2} (m+1-\epsilon_\chi)$.
\item The maximal element $s_0$ can take the value $\frac {1} {2} (m+1)$ only when $\epsilon_\chi =0$, and in this case,
$\sigma$ is isomorphic to the one dimensional representation $\nu_\chi \circ \det$. 
  \end{enumerate}
\end{prop}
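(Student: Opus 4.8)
The plan is to bootstrap the already known location of the poles of the \emph{Siegel} Eisenstein series $E^{P_a}$ on the doubled groups $G(W_a)$ (Proposition~\ref{prop:Siegel-eis-pole}) to the Eisenstein series $E^{Q_1}$, using the doubling identity of Proposition~\ref{prop:EPa-EQa} and the fact that the sections $\Phi_{f,s}$ are ``general enough'' (Lemma~\ref{lemma:section-general-enough}), and then, in the extremal case, to identify the residual representation of $G(W_a)$ at the edge explicitly.

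For item (1), fix an integer $a$ large enough that Lemma~\ref{lemma:approx} applies (with, say, threshold $s_1=0$) and that the sections $\Phi_{f,s}$ are holomorphic near the maximal element of $\cP_a(\sigma,\chi)$ (by the holomorphy lemma preceding Lemma~\ref{lemma:approx}). By Proposition~\ref{prop:pole1->polea} that maximal element is $s_1:=s_0+\tfrac12(a-1)$, and, combining this with Lemma~\ref{lemma:section-general-enough}, there are $\Phi_s\in\Ind_{P_a(\A)}^{G(W_a)(\A)}\chi|\det|^s$ and $f$ in the space of $\sigma\otimes\chi^{-1}$ with $E^{Q_a}(\cdot,s,\Phi_f)$ having a pole at $s=s_1$. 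Now Proposition~\ref{prop:EPa-EQa}(3) gives
\begin{equation*}
  \int_{[G(X)]} E^{P_a}(\iota(g_a,g),s,\Phi)\,f(g)\,dg = E^{Q_a}(g_a,s,\Phi_f).
\end{equation*}
Since $f$ is a cusp form on the finite-volume space $[G(X)]$ and $E^{P_a}$ is of moderate growth, the left side is holomorphic wherever $s\mapsto E^{P_a}(g',s,\Phi)$ is holomorphic for all $g'$; hence $E^{P_a}(g',s,\Phi)$ has a pole at $s=s_1$ for some $g'$, so by Proposition~\ref{prop:Siegel-eis-pole} we get $s_1\in\Xi_{m+a}(\chi)$, i.e. $s_1=\tfrac12(m+a-\epsilon_\chi)-j$ for some integer $j\ge0$. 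Subtracting $\tfrac12(a-1)$ yields $s_0=\tfrac12(m+1-\epsilon_\chi)-j$, and $s_0>0$ forces $j<\tfrac12(m+1-\epsilon_\chi)$, proving (1). The first assertion of (2) is then immediate: $s_0=\tfrac12(m+1)$ forces $j=-\tfrac12\epsilon_\chi\ge0$, hence $\epsilon_\chi=0$ (and $j=0$).

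For the second assertion of (2), assume $s_0=\tfrac12(m+1)$, so $\epsilon_\chi=0$ and, in the notation above, $s_1=\rho_{P_a}=\tfrac12(m+a)$ is the edge point. By item (1) the Siegel Eisenstein series $E^{P_a}(\cdot,s,\Phi)$ has a (simple, by Proposition~\ref{prop:Siegel-eis-pole}) pole at $s=\rho_{P_a}$, and its residue there equals $c_\Phi\cdot(\nu_\chi\circ\det)$ for a scalar $c_\Phi$, because the residual representation of $G(W_a)(\A)$ at the edge of the Siegel Eisenstein series attached to $\chi|\det|^s$ is the one-dimensional representation $\nu_\chi\circ\det$ (the boundary case of the Siegel--Weil theory for unitary groups; cf. \cite{MR1692899} and \cite{gan14:_regul_siegel_weil_formul_secon}). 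Taking residues at $s=\rho_{P_a}$ in the identity of Proposition~\ref{prop:EPa-EQa}(3), interchanging residue and integral, and using that $\det_{W_a}(\iota(g_a,g))=\det_{X_a}(g_a)\det_{X'}(g)$ together with $G(X')=G(X)$ and $\det_{X'}=\det_X$, we obtain
\begin{equation*}
  \Res_{s=\rho_{P_a}} E^{Q_a}(g_a,s,\Phi_f) = c_\Phi\,\nu_\chi(\det_{X_a}g_a)\int_{[G(X)]}\nu_\chi(\det_X g)\,f(g)\,dg .
\end{equation*}
The left side is not identically zero, so $c_\Phi\neq0$ and the period $\int_{[G(X)]}\nu_\chi(\det g)\,f(g)\,dg$ is nonzero for some $f$ in the space of $\sigma\otimes\chi^{-1}$. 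Untwisting by the definition of $\sigma\otimes\chi^{-1}$, this says that $\sigma$ carries a nonzero pairing against the automorphic character $\nu_\chi\circ\det$ of $G(X)(\A)$; since $\nu_\chi\circ\det$ is irreducible and $\sigma$ lies in the discrete spectrum, this forces $\sigma\cong\nu_\chi\circ\det$.

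The main obstacle is the last step: one must pin down that the residue of the Siegel Eisenstein series at the edge $s=\rho_{P_a}$ is exactly (a scalar multiple of) the one-dimensional automorphic representation $\nu_\chi\circ\det$ when $\epsilon_\chi=0$, and keep the various $\chi$-twists consistent throughout (the $\chi$ in $\cA_a(s,\chi,\sigma)$, the twist $\sigma\otimes\chi^{-1}$, and the identification of $\nu_\chi$ with the relevant character of $E^1$ via $\nu_{\chi,v}(x/\bar x)=\chi_v(x)$). If one prefers to avoid the Siegel--Weil input, item (2)(ii) can instead be derived as follows: the pole of $E^{Q_1}$ at $s_0=\tfrac12(m+1)$, together with the Gindikin--Karpelevich computation of its constant term carried out in the proof of Proposition~\ref{prop:L-pole->Eis-pole}, shows that $L^S(s,\sigma\times\chi)$ is holomorphic for $\Re(s)>\tfrac12(m+1)$ and has a simple pole at $s=\tfrac12(m+1)$; by the discussion in the Introduction this means the simple global Arthur parameter $(\chi,m)$ occurs in $\psi_\sigma$ with maximal $b=m$, and since $\psi_\sigma$ has total size $m=\dim X$ this forces $\psi_\sigma=\chi\boxtimes[m]$, whose associated member of the discrete spectrum of $G(X)$ is precisely $\nu_\chi\circ\det$.
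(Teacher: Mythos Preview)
Your argument is correct and follows essentially the same route as the paper: reduce part~(1) to the known pole set $\Xi_{m+a}(\chi)$ of the Siegel Eisenstein series via Proposition~\ref{prop:pole1->polea}, Lemma~\ref{lemma:section-general-enough} and Proposition~\ref{prop:EPa-EQa}, and for part~(2) use that the residue of $E^{P_a}$ at the edge $s=\tfrac12(m+a)$ is the one-dimensional representation $\nu_\chi\circ\det$, so the doubling identity forces the period of $f\in\sigma\otimes\chi^{-1}$ against $\nu_\chi\circ\det$ to be nonzero. Your alternative route through the Arthur classification is logically independent but somewhat circular in the context of this paper, and the step ``pole of $E^{Q_1}$ at $s_0$ implies pole of $L^S(s,\sigma\times\chi)$ there'' is not literally contained in Proposition~\ref{prop:L-pole->Eis-pole} (which only proves the converse direction); the primary argument you give is the one the paper uses.
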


\begin{proof}
  Assume that $s_0$ is  the maximal element in $\cP_1  (\sigma,\chi)$. Then for $a$ large enough, by Lemma~\ref{lemma:section-general-enough}, there exists a section of $ \Ind_{Q_a(\A)}^{G(X_a)(\A)}\chi |\ |^s\otimes \sigma$ of the form $\Phi_{f, s}$ with $f\in \sigma\otimes\chi^{-1}$ such that $E^{Q_a} (g,s,\Phi_{f,s})$ has a pole at $s=s_0+\frac{1}{2} (a-1)$.   From Prop.~\ref{prop:EPa-EQa}  and Prop.~\ref{prop:Siegel-eis-pole}, we see that $s_0+\frac{1}{2} (a-1)$ is in
$ \Xi_{m+a} (\chi)$. Thus $s_0$ is of the form $\frac{1}{2}
(m+1-\epsilon_\chi) - j$ for some integer $j$ such that $0 \le j <
\frac{1}{2} (m+1-\epsilon_\chi)$.

The space of residues of $E^{P_a} (g,s,\Phi)$ at $s=\frac{1}{2} (m+a)$ is the one-dimensional representation $\nu_\chi\circ \det$. Then from Prop.~\ref{prop:EPa-EQa} in order to produce a non-vanishing residue for $E^{Q_a} (g_a,s,\Phi_{f,s})$, $\sigma\otimes\chi^{-1}$ must be the one-dimensional representation $\nu_\chi^{-1} \circ \det$. Hence $\sigma$ must be the one-dimensional representation $\nu_\chi \circ \det$.
\end{proof}

\section{First Occurrence in Theta Correspondences}
\label{sec:fo-theta}
\subsection{Unitary dual reductive pairs}
\label{sec:theta-lift-unitary}
Let $X$ be a skew-Hermitian space and $Y$ a Hermitian space. Then $G
(X)$ and $G (Y)$ form a dual reductive pair. Fix a pair of characters
$(\chi_1,\chi_2)$ of $E^\times \lmod\A_E^\times$ such that $\chi_1|_{\A_F^\times} =
\varepsilon_{E/F}^{\dim Y}$ and $\chi_2|_{\A_F^\times} =
\varepsilon_{E/F}^{\dim X}$. Along with the additive character $\psi$,
$\chi_1$ (resp. $\chi_2$) determines a splitting of the metaplectic group $\Mp(R_{E/F} ( Y\otimes_EX) ) (\A)$ 
over $G (X) (\A)$ (resp. $G (Y)(\A)$). Then we can define the Weil representation
$\omega_{\psi,\chi_1,\chi_2}$ of $G (X) (\A) \times G (Y) (\A)$ on the
Schwartz space $\cS (V (\A))$ where
$V$ is a maximal isotropic subspace of
$R_{E/F}(Y\otimes_EX)$.  See \cite{MR1286835} for more details.

If we change $\chi_2$ to another eligible character $\chi_2'$,
then (c.f. \cite [eq.~(1.8)] {MR1327161} and \cite [eq.~(2.9)] {MR3071813})
\begin{equation}
  \label{eq:change-chi2}
  \omega_{\psi,\chi_1,\chi_2'} (g,h) = \nu_{\chi_2'\chi_2^{-1}} (\det h)\omega_{\psi,\chi_1,\chi_2} (g,h)
\end{equation}
for $(g,h) \in  G (X) (\A) \times G (Y) (\A)$. The character $\nu_{\chi_2'\chi_2^{-1}}$ is defined in \eqref{eq:nu_chi}. Similar result holds if we change $\chi_1$.

For $\xi\in \cS (V (\A))$ we form the theta series
\begin{equation*}
  \theta_{\psi,\chi_1,\chi_2,X,Y} (g,h,\xi) :=
\sum_{v\in V (F)} \omega_{\psi,\chi_1,\chi_2} (g,h)\xi (v)
\end{equation*}
for $g\in G (X) (\A)$ and $h\in G (Y) (\A)$.
For $\sigma\in\CA_\cusp(G(Y))$, and for $f\in\sigma$ and $\xi\in \cS (V (\A))$, we define
\begin{equation*}
  \theta_{\psi,\chi_1,\chi_2,Y}^X (g,f,\xi) := \int_{[G (Y)]} \theta_{\psi,\chi_1,\chi_2,X,Y} (g,h,\xi) f (h)dh.
\end{equation*}
Then the  theta lift $\theta_{\psi,\chi_1,\chi_2, Y}^{X} (\sigma)$  is defined to be the span of all such 
$\theta_{\psi,\chi_1,\chi_2,Y}^X (g,f,\xi)$'s. 
Similarly if $\sigma\in\CA_\cusp(G(X))$, $\theta_{\psi,\chi_1,\chi_2, X}^{Y} (\sigma)$ is defined to be the span of
\begin{equation*}
  \theta_{\psi,\chi_1,\chi_2,X}^Y (g,f,\xi) := \int_{[G (X)]} \theta_{\psi,\chi_1,\chi_2,X,Y} (g,h,\xi) f (g)dg
\end{equation*}
for all  $f\in\sigma$ and $\xi\in \cS (V (\A))$. We note that the choice of $V$ is not important.

In the case where $\dim Y =0$ the Weil representation is simply the one-dimensional representation in which $G (X) (\A)$ acts by $\nu_{\chi_1}\circ\det$ and the theta series is given by
\begin{equation*}
  \theta_{\psi,\chi_1,\chi_2,X,Y} (g,I,\xi) := \omega_{\psi,\chi_1,\chi_2} (g,I)\xi (0)
= \nu_{\chi_1} (\det g) \xi (0)
\end{equation*}
for $g\in G (X) (\A)$.

\subsection{First occurrence and lowest occurrence}
\label{sec:folo}

Let $Y$ be a (possibly trivial) Hermitian  space over $E$ . Let $r$ be its Witt index. We define
similarly the Hermitian space $Y_b$ for $b\ge -r$ by adjoining or stripping away some copies of hyperbolic plane.
  Let $G (Y_b)$
be the isometry groups. Then $G (X)$ and $G (Y_b)$ form a dual
reductive pair and we consider the theta lift
$\theta_{\psi,\chi,\chi_2,X}^{Y_b} (\sigma)$ of $\sigma$ from $G
(X)$ to the Witt tower of unitary groups $G (Y_b)$ for varying
$b$'s. Note that $\chi$ is used for the splitting over $G (X)$ and
$\chi_2$ is used for the splitting over $G (Y_b)$ for all $b$.

Assume that for $b=b_0$, $\theta_{\psi,\chi,\chi_2,X}^{Y_b}
(\sigma)$ is nonzero and cuspidal. Then we say that the first
occurrence $\FO_{\psi,\chi,\chi_2}^Y (\sigma)$ of $\sigma$ in the
Witt tower of $G (Y)$ is $\dim (Y_{b_0})$. Once we fix $\chi$ the
parity of the dimension of $Y$ is fixed: $\epsilon_{\chi} \equiv \dim
Y \pmod {2}$. Note that $\FO_{\psi,\chi,\chi_2}^Y (\sigma) = \FO_{\psi,\chi,\chi_2'}^Y (\sigma)$ for any eligible $\chi_2'$, 
because the corresponding theta  lifts are off just by a twist of character. Thus we define the lowest occurrence
\begin{align*}
  \LO_{\psi,\chi} (\sigma) := \min  \{\FO_{\psi,\chi,\chi_2}^Y (\sigma) | Y: \dim Y \equiv  \epsilon_{\chi} \pmod 2\}.
\end{align*}
With this setup we can state the following.
\begin{thm}\label{thm:Eis-pole-LO}
For $\sigma\in\CA_\cusp(G(X))$, if $s_0$ is the maximal element of $\cP_1(\sigma,\chi)$, then
  \begin{enumerate}
  \item $s_0 = \frac{1}{2} (m+1-\epsilon_\chi)-j$ for
  some integer $j$ such that $0 \le j < \frac{1}{2} (m+1-\epsilon_\chi)$ ;
  \item $\LO_{\psi,\chi} (\sigma\otimes\chi^{-1}) \le  2j+\epsilon_\chi$;
    \item $2j+\epsilon_\chi \ge r_X$ where $r_X$ is the Witt index of $X$.
  \end{enumerate}
\end{thm}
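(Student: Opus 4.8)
The plan is to reduce everything to the Siegel--Weil formula on the doubled group, following the pattern of the orthogonal case of \cite{MR2540878} but with unitary target towers. Part~(1) requires nothing new: it is Proposition~\ref{prop:non-siegel-eis-pole}(1), which holds for both values of $\epsilon_\chi$, so from now on I write $s_0=\tfrac12(m+1-\epsilon_\chi)-j$ with $0\le j<\tfrac12(m+1-\epsilon_\chi)$ and set $k:=2j+\epsilon_\chi$, so that $k\equiv\epsilon_\chi\pmod 2$ and $0\le k<m+1$. The substance is in (2) and (3); the idea for both is to transport the pole of $E^{Q_a}$ at $s_0+\tfrac12(a-1)$ to a pole of the Siegel Eisenstein series $E^{P_a}$ on $G(W_a)$ via Proposition~\ref{prop:EPa-EQa}, to recognize its residue through the Siegel--Weil formula as a theta integral from a Hermitian space $Y$ with $\dim Y=k$, and then, by a seesaw, to read off the non-vanishing of a theta lift of $\sigma\otimes\chi^{-1}$.

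For (2) I would first take $a$ large in the sense of Lemmas~\ref{lemma:approx} and~\ref{lemma:section-general-enough} and put $s_*:=s_0+\tfrac12(a-1)=\tfrac12(m+a-\epsilon_\chi)-j\in\Xi_{m+a}(\chi)$. By Proposition~\ref{prop:pole1->polea} and Lemma~\ref{lemma:section-general-enough} there are a section $\Phi_s\in\Ind_{P_a(\A)}^{G(W_a)(\A)}\chi|\det|_{\A_E}^s$ and $f\in\sigma\otimes\chi^{-1}$ for which $E^{Q_a}(g,s,\Phi_{f,s})$ has a pole at $s_*$; Proposition~\ref{prop:EPa-EQa}(3) then shows that $E^{P_a}(\cdot,s,\Phi)$ has a (simple, by Proposition~\ref{prop:Siegel-eis-pole}) pole at $s_*$ whose residue, paired over $[G(X)]$ against $f$, is not identically zero. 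Since $0\le k<m+a$ and $a$ is large, $s_*$ lies well inside the first-term range for $G(W_a)$ and the relevant theta integrals over $[G(Y)]$ converge absolutely; the crucial step is then to invoke the Siegel--Weil formula for the unitary dual pair $(G(W_a),G(Y))$ (\cite{gan14:_regul_siegel_weil_formul_secon}, \cite{MR3071813}, together with the doubling computation of \cite{MR1166512} behind the lemmas above) to write $\Res_{s=s_*}E^{P_a}(\cdot,\Phi)$ as a finite sum, over Hermitian spaces $Y$ with $\dim Y=k=2j+\epsilon_\chi$, of theta integrals $\int_{[G(Y)]}\theta_{\psi,\chi,\chi_2,W_a,Y}(\,\cdot\,,h,\phi)\,dh$; the location $s_*=\tfrac12(m+a-\epsilon_\chi)-j$ is what pins down $\dim Y=2j+\epsilon_\chi$, the case $j=0$ recovering the one-dimensional residue $\nu_\chi\circ\det$ already met in the proof of Proposition~\ref{prop:non-siegel-eis-pole}. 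I would then use $W_a=X_a\perp X'$ and the seesaw formed by the dual pairs $(G(W_a),G(Y))$ and $(G(X_a)\times G(X'),\,G(Y)\times G(Y))$, with $\iota=\iota_1\times\iota_2\colon G(X_a)\times G(X')\hookrightarrow G(W_a)$ and the diagonal $G(Y)^\Delta\hookrightarrow G(Y)\times G(Y)$: restricting $\theta_{W_a,Y}$ along $\iota$ factors it as a product $\theta_{X_a,Y}(g_a,h,\cdot)\,\theta_{X',Y}(g,h,\cdot)$, and after a Fubini (licit by absolute convergence) the nonvanishing residue becomes
\begin{equation*}
  \int_{[G(Y)]}\theta_{\psi,\chi,\chi_2,X_a,Y}(g_a,h,\phi_1)\,\theta^{Y}_{\psi,\chi,\chi_2,X'}(h,f,\phi_2)\,dh\not\equiv0 .
\end{equation*}
Hence $\theta^{Y}_{\psi,\chi,\chi_2,X'}(\,\cdot\,,f,\phi_2)\not\equiv0$ for some data, that is, identifying $G(X')$ with $G(X)$, the theta lift of $\sigma\otimes\chi^{-1}$ to $G(Y)$ is nonzero for this $Y$ of dimension $2j+\epsilon_\chi$. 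Thus $\FO_{\psi,\chi,\chi_2}^Y(\sigma\otimes\chi^{-1})\le 2j+\epsilon_\chi$, and so $\LO_{\psi,\chi}(\sigma\otimes\chi^{-1})\le 2j+\epsilon_\chi$, which is (2).

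For (3), write $\sigma':=\sigma\otimes\chi^{-1}$, let $k_0\le 2j+\epsilon_\chi$ be the first occurrence of $\sigma'$ in the Witt tower of the $Y$ just produced, and let $Y_0$ be the member of that tower with $\dim Y_0=k_0$. I would use the Rallis tower property: the lift $\tau:=\theta^{Y_0}_{\psi,\chi,\chi_2,X}(\sigma')$ is nonzero and cuspidal on $G(Y_0)$, and then, by the Rallis inner product formula (\cite{gan14:_regul_siegel_weil_formul_secon}), $\sigma'$ occurs in the back theta lift $\theta^{X}_{\psi,\chi,\chi_2,Y_0}(\tau)$. On the other hand, the Fourier coefficients of any automorphic form in $\theta^{X}_{\psi,\chi,\chi_2,Y_0}(\tau)$ along the (Heisenberg-type) unipotent radical of the maximal parabolic of $G(X)$ stabilizing a maximal isotropic subspace are supported on Hermitian matrices of rank $\le\dim Y_0=k_0$, because the matching Fourier coefficient of the theta kernel $\theta_{X,Y_0}$ vanishes on Hermitian matrices not represented by $Y_0$. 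Hence $\sigma'$ has rank $\le k_0$; but $\sigma'$ is cuspidal, so Li's theorem on the nonexistence of singular cusp forms for classical groups (\cite{MR1168122}) forces the rank of $\sigma'$ to equal the Witt index $r_X$ of $X$. Therefore $r_X\le k_0\le 2j+\epsilon_\chi$.

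The hard part will be the middle step of~(2): making the (regularized) Siegel--Weil formula for $(G(W_a),G(Y))$ precise at the boundary point $s_*=\tfrac12(m+a-\epsilon_\chi)-j$ --- identifying $\dim Y=2j+\epsilon_\chi$ and keeping honest track of the two splitting characters carried by the residue, in particular seeing how $\chi$ reappears on the $G(X')$-side as the twist responsible for $\sigma\otimes\chi^{-1}$ rather than $\sigma$ --- and checking that the section supplied by Lemma~\ref{lemma:section-general-enough} can be arranged so that the seesaw unfolding goes through. The remaining inputs --- convergence of the theta integrals for $a$ large, the tower property, the rank estimate for theta lifts, and Li's nonsingularity theorem --- are, by now, routine.
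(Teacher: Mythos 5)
Your parts (1) and (2) follow the paper's route, but there is a gap in (2): the claim that ``the relevant theta integrals over $[G(Y)]$ converge absolutely'' is false. With $\dim Y = 2j+\epsilon_\chi < m+a$ you are in the first-term range of the regularized Siegel--Weil formula, where the naive theta integral $\int_{[G(Y)]}\theta_{W_a,Y}(g,h,\phi)\,dh$ diverges. The paper's formula (Eq.~\eqref{eq:siegel-weil}) therefore carries a regularizing Hecke operator $\omega_{\psi,\chi,\mathbf{1}}(\alpha_Y)$ acting on the Schwartz function, and it is the integral with $\omega(\alpha_Y)\phi_Y$ that converges; one must then choose $\omega(\alpha_Y)\phi_Y$ to factor as $\phi_Y^{(1)}\otimes\phi_Y^{(2)}$ along the seesaw polarization before separating variables, and the Fubini has to be done with this regularized integrand. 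Your final paragraph mentions ``the (regularized) Siegel--Weil formula'' so you are aware of the issue, but the absolute-convergence assertion in the body contradicts it and, taken literally, would dissolve the main technical point rather than resolve it. Also be aware that the restriction of $\theta_{W_a,Y}$ along $\iota$ produces the extra twist $\chi_2^{-1}(\det h)$ in the paper's seesaw, coming from the change-of-splitting relation \eqref{eq:change-chi2}; your factorization is stated without it.

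For (3) you take a genuinely different route. The paper forms $\pi = \theta^Z_{\psi,\chi,\chi_2,X}(\sigma\otimes\chi^{-1})$ with $\dim Z = 2j'+\epsilon_\chi$, invokes the stable-range bound $\FO^X_{\psi^{-1},\chi,\chi_2,Z}(\chi_2^{-1}\otimes\pi)\le m_0 + 2\dim Z$ and the involutive identity $\theta^X_{\psi^{-1},\chi,\chi_2,Z}(\chi_2^{-1}\otimes\pi)=\sigma$ from \cite[Thms.~5.2, 5.3]{MR3071813}, and deduces $m=m_0+2r_X\le m_0+2(2j'+\epsilon_\chi)$. You instead compute Fourier coefficients of the back lift directly: the coefficients of $\theta^X_{Y_0}(\tau)$ along the Hermitian center of the Siegel unipotent of $G(X)$ are supported on Hermitian matrices of rank at most $\dim Y_0 = k_0$ (the theta kernel $\theta_{X,Y_0}$ vanishes on matrices not represented by $Y_0$), while Li's nonsingularity theorem \cite{MR1168122} forces the cuspidal $\sigma'=\sigma\otimes\chi^{-1}$ to have a nonvanishing nonsingular coefficient of rank $r_X$. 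This is correct, and in fact it makes explicit the rank estimate that underlies the stable-range bound; it is closer in spirit to the Fourier-coefficient computations of Section 5 of the paper than to the paper's own proof of (3). Do keep track of the conjugate character $\psi^{-1}$ (and the attendant $\chi_2^{-1}$ twist) for the back lift, as the paper does.
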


\begin{rmk}
  The first part is just Prop.~\ref{prop:non-siegel-eis-pole}.
   The proof will be given  in Sec.~\ref{sec:pf-thm} by using  the regularised Siegel-Weil formula.
\end{rmk}

The following theorem is a corollary to Thm.~\ref{thm:Eis-pole-LO}.
\begin{thm}\label{thm:poleL}
For $\sigma\in\CA_\cusp(G(X))$, the following hold.
  \begin{enumerate}
  \item Assume that the partial $L$-function $L^S (s,\sigma \times \chi)$  has a pole at $s= \half (m+1-\epsilon_\chi) - j >0$  or assume that   $m-\epsilon_\chi$ is even and that $L^S (s,\sigma \times \chi)$ is non-vanishing at $s=\half (m+1-\epsilon_\chi) - j=1/2$. Then $\LO_{\psi,\chi} (\sigma\otimes\chi^{-1}) \le 2j+ \epsilon_\chi$.
  \item If $\LO_{\psi,\chi} (\sigma\otimes\chi^{-1}) = 2j + \epsilon_\chi < m +1$, then $L^S (s,\sigma \times \chi)$ is holomorphic for $\Re (s ) > \half (m+1-\epsilon_\chi) - j$.
  \item If $\LO_{\psi,\chi} (\sigma\otimes\chi^{-1}) = 2j + \epsilon_\chi \ge m +1$, then $L^S (s,\sigma \times \chi)$ is holomorphic for $\Re (s ) \ge 1/2$.
  \end{enumerate}
\end{thm}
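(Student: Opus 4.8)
The plan is to deduce all three assertions from Proposition~\ref{prop:L-pole->Eis-pole} together with Theorem~\ref{thm:Eis-pole-LO}, via a single bridge: \emph{if $L^S(s,\sigma\times\chi)$ has a pole at some real $s'\ge 1/2$, or if $m-\epsilon_\chi$ is even, $L^S(s,\sigma\times\chi)$ is holomorphic for $\Re(s)>1/2$ and $L^S(1/2,\sigma\times\chi)\ne0$, then $\cP_1(\sigma,\chi)\ne\varnothing$ and its maximal element $s^\ast$ satisfies $s^\ast\ge s'$ (resp.\ $s^\ast\ge 1/2$); writing $s^\ast=\tfrac12(m+1-\epsilon_\chi)-\tilde\jmath$ with $\tilde\jmath$ an integer, $0\le\tilde\jmath<\tfrac12(m+1-\epsilon_\chi)$, as in Theorem~\ref{thm:Eis-pole-LO}(1), one has}
\[
\LO_{\psi,\chi}(\sigma\otimes\chi^{-1})\le 2\tilde\jmath+\epsilon_\chi .
\]
To prove the bridge I would let $s_1\ge s'$ be the rightmost pole of $L^S(s,\sigma\times\chi)$ (the poles in $\Re(s)\ge 1/2$ form a finite real set; I take this from the doubling-method description of these poles, equivalently from their being inherited from the poles of $E^{Q_1}$, which are real by \cite{MR1361168}). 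If $s_1>1/2$, the first bullet of Proposition~\ref{prop:L-pole->Eis-pole} puts $s_1\in\cP_1(\sigma,\chi)$; in the non-vanishing alternative, the second bullet puts $1/2\in\cP_1(\sigma,\chi)$. Then $s^\ast=\max\cP_1(\sigma,\chi)\ge s_1\ge s'$ (resp.\ $\ge 1/2$), and applying Theorem~\ref{thm:Eis-pole-LO}(1)–(2) with $s_0=s^\ast$ gives the stated form of $s^\ast$ and the displayed bound.

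Granting the bridge, each part is a comparison of the two inequalities $0\le\tilde\jmath<\tfrac12(m+1-\epsilon_\chi)$ and $s^\ast\ge s'$. For (1): a pole at $s_0=\tfrac12(m+1-\epsilon_\chi)-j>0$ has $s_0\ge1/2$ since $j\in\Z$, so the bridge gives $\tfrac12(m+1-\epsilon_\chi)-\tilde\jmath=s^\ast\ge s_0$, hence $\tilde\jmath\le j$ and $\LO_{\psi,\chi}(\sigma\otimes\chi^{-1})\le 2\tilde\jmath+\epsilon_\chi\le 2j+\epsilon_\chi$; in the non-vanishing case $\tfrac12(m+1-\epsilon_\chi)-j=1/2$ means $2j+\epsilon_\chi=m$, and I split according to whether $L^S$ is holomorphic for $\Re(s)>1/2$ (apply the second alternative of the bridge, getting $\tilde\jmath\le\tfrac{m-\epsilon_\chi}{2}=j$) or has a pole to the right of $1/2$ (apply the first alternative to that rightmost pole, getting $\tilde\jmath<j$). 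For (2): since $2j+\epsilon_\chi<m+1$ forces $s_0:=\tfrac12(m+1-\epsilon_\chi)-j\ge1/2$, a pole of $L^S$ with real part $>s_0$ would lie at a real $s'>s_0\ge1/2$, so the bridge gives $s^\ast\ge s'>s_0$, hence $\tilde\jmath<j$ and $\LO\le 2\tilde\jmath+\epsilon_\chi<2j+\epsilon_\chi=\LO$, a contradiction; thus $L^S$ is holomorphic for $\Re(s)>s_0$. For (3): a pole of $L^S$ at real $s'\ge1/2$ would give, via the bridge and $\tilde\jmath<\tfrac12(m+1-\epsilon_\chi)$, that $\LO\le 2\tilde\jmath+\epsilon_\chi<m+1\le\LO$, again a contradiction; so $L^S$ is holomorphic for $\Re(s)\ge1/2$.

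I expect the one genuinely non-formal point to be the borderline case $s_1=s'=1/2$ inside the bridge: Proposition~\ref{prop:L-pole->Eis-pole} as stated does not convert a pole of $L^S$ \emph{at} $s=1/2$ into a pole of $E^{Q_1}$. To handle it I would re-run, for $a=1$, the constant-term computation in the proof of that proposition — there the $c$-function of $M(w_\varnothing,s)$ is $I_1 I_3$ with $I_1=L^S(s,\sigma\times\chi)/L^S(s+1,\sigma\times\chi)$; at $s=1/2$ the denominator $L^S(3/2,\sigma\times\chi)$ is finite and non-zero and $I_3$ has no zero, so $I_1$ contributes the pole, and one checks, exactly as in that proof and using that $1/2$ is the rightmost pole, that no other Weyl term in $E^{Q_1}_{Q_1'}$ cancels it, so $1/2\in\cP_1(\sigma,\chi)$. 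Everything else is bookkeeping with the two defining inequalities for $\tilde\jmath$ and the ordering $s^\ast\ge s'$.
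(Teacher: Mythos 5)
Your proof takes essentially the same route as the paper: reduce via Proposition~\ref{prop:L-pole->Eis-pole} to a statement about the maximal element of $\cP_1(\sigma,\chi)$, apply Theorem~\ref{thm:Eis-pole-LO} to bound $\LO_{\psi,\chi}(\sigma\otimes\chi^{-1})$ in terms of that maximal element, and obtain parts (2) and (3) from part (1) by contradiction. Your extra attention to the borderline case of a rightmost pole of $L^S(s,\sigma\times\chi)$ exactly at $s=1/2$ (which the stated Proposition~\ref{prop:L-pole->Eis-pole} does not cover, since its first alternative needs $s_0>1/2$ and its second assumes holomorphy there) is a legitimate point that the paper's proof passes over without comment, and your proposed re-examination of the $c$-function for $M(w_\varnothing,s)$ at $a=1$ is the right patch.
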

\begin{proof}
  Assume that $L^S (s,\sigma \times \chi)$  has a pole at $s= \half (m+1-\epsilon_\chi) - j >0$. Let $\half (m+1-\epsilon_\chi) - j'$ be its maximal pole. Then by Prop.~\ref{prop:L-pole->Eis-pole}, $\half (m+1-\epsilon_\chi) - j' \in \cP_1 (\sigma,\chi)$, i.e., the Eisenstein series $E^{Q_1} (g,s,f)$ has a pole there. Let $\half (m+1-\epsilon_\chi) - j''$ be the maximal member of $\cP_1 (\sigma,\chi)$. We note that $j'' \le j' \le j$. By Thm.~\ref{thm:Eis-pole-LO}, the lowest occurrence $\LO_{\psi,\chi} (\sigma) \le 2j'' + \epsilon_\chi$. Hence $\LO_{\psi,\chi} (\sigma) \le 2j + \epsilon_\chi$.

Assume that  $L^S (s,\sigma \times \chi)$ is non-vanishing at $s=1/2$ and that   $m-\epsilon_\chi$ is even. If $L^S (s,\sigma \times \chi)$ has a pole in $\Re s > 1/2$, this reduces to the case in the preceding paragraph. Thus we assume that $L^S (s,\sigma \times \chi)$ is holomorphic in  $\Re s > 1/2$. In this case $j=\half (m-\epsilon_\chi)$. Again by Prop.~\ref{prop:L-pole->Eis-pole}, we have $1/2=\half (m+1-\epsilon_\chi) - j \in \cP_1 (\sigma,\chi)$. Then the same reasoning as in the preceding paragraph shows that $\LO_{\psi,\chi} (\sigma) \le 2j + \epsilon_\chi = m$. This concludes the proof of part (1).

For part (2),  assume that $\LO_{\psi,\chi} (\sigma\otimes\chi^{-1}) = 2j + \epsilon_\chi < m +1$ and  that $L^S (s,\sigma \times \chi)$ has a pole in $\Re (s ) > \half (m+1-\epsilon_\chi) - j$. Part (1) shows that $\LO_{\psi,\chi} (\sigma\otimes\chi^{-1}) \le 2j'+ \epsilon_\chi$ for some $j' < j$. Thus $\LO_{\psi,\chi} (\sigma\otimes\chi^{-1})$ is strictly less than $2j+ \epsilon_\chi$, which is a contradiction.

For part (3), assume that $\LO_{\psi,\chi} (\sigma\otimes\chi^{-1}) = 2j + \epsilon_\chi \ge m +1$ and that $L^S (s,\sigma \times \chi)$ has a pole in $\Re (s )\ge 1/2$. Then part (1) implies that $\LO_{\psi,\chi} (\sigma\otimes\chi^{-1})\le m$. We get a contradiction.
\end{proof}

\subsection{Degenerate principal series representation}
\label{sec:deg-prin-rep}

We summarise the results on the structure of degenerate principal series representation here to prepare for the proof of Thm.~\ref{thm:Eis-pole-LO}. We work in the local case in this subsection.
Fix a place $v$ of $F$. First assume that $v$ is non-split. Let $W$ be a split skew-Hermitian space of dimension $2n$ over $E_v$, so $G (W) $ is a quasi-split unitary group.
Let $P$ be a Siegel parabolic subgroup of $G (W)$.
The irreducible components of the induced representation $I (s,\chi):=\Ind_P^{G (W)}\chi|\ |_E^s$ are related to theta correspondence as we explain below. Let $Y$ be a Hermitian space over $E_v$ of dimension $l$ such that $l \equiv \epsilon_\chi \mod 2$.
We consider the dual reductive pair of $G (Y)$ and the quasi-split unitary group $G (W)$.
As we reviewed, the Weil representation for unitary dual reductive pairs depends on the choice of two characters $\chi_1$ and $\chi_2$ in addition to the additive character $\psi$.  We emphasise our convention  that $\chi_1$  is used for the  splitting over $G (W)$ and $\chi_2$ is used for the  splitting over  $G (Y)$. Since $\dim W$ is even, we may and do let $\chi_2 = \mathbf {1}$. Also we set $\chi_1=\chi$. Then we  get the corresponding local Weil representation of the dual reductive pair.

Next assume that $v$ is split. Then $E_v=F_v\oplus F_v$. The non-trivial Galois element of $E/F$ exchanges the two factors of $E_v$. In fact the above discussion for $E_v/F_v$ with non-split $v$ actually goes through. The skew-Hermitian space $W$ can be viewed as $W_1 \oplus W_2$ where $W_1$ and $W_2$ are vector spaces over $F_v$. The non-degenerate skew-Hermitian form on $W$ induces a non-degenerate pairing of $W_1$ and $W_2$. Thus $G (W)$ consists of elements in $\GL (W_1) \times \GL (W_2)$ that are of the form $(g, ( g^*)^{-1})$ where $g^*$ is the adjoint of $g$ under the pairing of $W_1$ and $W_2$. Projecting $G (W)$ to the factor $\GL (W_1)$ we get an isomorphism of $G (W)$ with $\GL (W_1)$. Similarly writing $Y$ as $Y_1\oplus Y_2$ for  two vector spaces $Y_1$ and $Y_2$ over $F$ we can identify $G (Y)$ with  $\GL (Y_1)$. The Siegel parabolic subgroup $P$ of $G (W)$ under this isomorphism becomes the parabolic subgroup $P_{n,n}$ of $\GL_{2n}$ corresponding to the partition $[n,n]$ of $2n$. Write $\chi$ as $\chi^{(1)}  \times \chi^{(2)}$. Then $\Ind_P^{G (W)}\chi|\ |_E^s$ is identified with $\Ind_{P_{n,n}}^{\GL (W_1)} \chi^{(1)}|\ |_F^s\otimes (\chi^{(2)})^{-1} |\ |_F^{-s}$.

When $v$ is non-split let $R_{\psi,\chi} (Y)$ be the image of the map of  forming ``Siegel-Weil section'':
\begin{align*}
  \cS (R_{E_v/F_v} ( Y\otimes_E W^+)) &\rightarrow I (s,\chi)\\
  \xi &\mapsto \omega_{\psi,\chi,\mathbf {1},W,Y} (g) \xi (0)
\end{align*}
where $W^+$ is a maximal isotropic subspace of $W$.
Up to isometry there are two Hermitian spaces with given dimension when $v$ is  non-archimedean.

When $v$ is split let $R_{\psi,\chi} (Y_1)$ be the image of the map of  forming ``Siegel-Weil section'':
\begin{align*}
  \cS ( Y_1\otimes_F W_1) &\rightarrow I (s,\chi)\\
  \xi &\mapsto \omega_{\psi,\chi,\mathbf {1},W,Y} (g) \xi (0).
\end{align*}

From \cite{MR1459856} for the non-archimedean case and \cite{MR1443883,MR2378075} for the archimedean case we have:
\begin{prop} Let
    $0\le s=\frac{1}{2} (n-\epsilon_\chi)-j < \frac{1}{2} n$.
  \begin{enumerate}
  \item Assume that $v$ is non-archimedean and $E_v/F_v$ is not split.  Then the maximal semi-simple
    submodule of $I (-s,\chi)$ is
  \begin{align*}
    \oplus_{Y} R_{\psi,\chi} (Y)
  \end{align*}
  where $Y$ runs over Hermitian spaces over $E_v$ of dimension
  $2j+\epsilon_\chi$.
\item Assume that $v$ is non-archimedean and $E_v/F_v$ is split. Then the maximal semi-simple
    submodule of $I (-s,\chi)$ is
  \begin{align*}
    R_{\psi,\chi} (Y)
  \end{align*}
  where $Y$ is the vector  spaces over $F_v$ of dimension
  $2j+\epsilon_\chi$.
  \item Assume that $E_v/F_v=\C/\R$.  Then the maximal semi-simple
    submodule of $I (-s,\chi)$ is
  \begin{align*}
    \oplus_{Y} R_{\psi,\chi} (Y)
  \end{align*}
where $Y$ runs over Hermitian spaces over $\C$ of dimension $2j + \epsilon_\chi$.
\item Assume that $F_v=\C$, so $E_v/F_v$ split. Then the maximal semi-simple
    submodule of $I (-s,\chi)$ is
 \begin{align*}
     R_{\psi,\chi} (Y)
  \end{align*}
where $Y$ is a vector space over $\C$ of dimension $2j + \epsilon_\chi$.
\item The spaces $R_{\psi,\chi} (Y)$ above are irreducible and unitarisable.
  \end{enumerate}
\end{prop}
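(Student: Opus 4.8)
The plan is to obtain the proposition by transcribing, into the normalisation fixed in Section~\ref{sec:notation}, the determination of the socle (maximal semisimple submodule) of the local degenerate principal series $I(s,\chi)$ of $G(W)$: this is carried out in \cite{MR1459856} in the non-archimedean case and in \cite{MR1443883,MR2378075} in the archimedean case. Those references compute, at each reducibility point, the socle of $I(s,\chi)$, identify its irreducible constituents with the images of the ``Siegel--Weil'' maps $\xi\mapsto\omega_{\psi,\chi,\mathbf{1},W,Y}(g)\xi(0)$ attached to spaces $Y$, and establish the irreducibility and unitarisability of these images. So the substance of the argument is a careful reconciliation of conventions, not new representation-theoretic input.

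First I would pin down the dictionary of parameters. With $\dim W = 2n$, the half sum $\rho_P$ for the Siegel parabolic $P$ of $G(W)$ is a fixed multiple of the relevant fundamental weight, and the reducibility points of $I(s,\chi)$ in $\Re(s)\ge 0$ are exactly the numbers $s=\tfrac{1}{2}(n-\epsilon_\chi)-j$ with $0\le j<\tfrac{1}{2}n$; the proposition concerns the socle of $I(-s,\chi)$ at such a point. The key compatibility to verify is that the Siegel--Weil map, which produces a section of $I(s',\chi)$ for an $s'$ depending on $\dim Y$, lands in $I(-s,\chi)$ precisely when $\dim Y = 2j+\epsilon_\chi$. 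This follows by evaluating the left translation action of $M_P(F_v)\cong R_{E/F}\GL_n(F_v)$ on $\omega_{\psi,\chi,\mathbf{1},W,Y}(g)\xi(0)$ via the explicit Weil representation formulas, together with the parity constraint $\dim Y\equiv\epsilon_\chi\pmod 2$ forced here by the choices $\chi_1=\chi$ and $\chi_2=\mathbf{1}$. Granting this, part (1) is exactly the statement of \cite{MR1459856}, with the sum over $Y$ ranging over the two Hermitian $E_v$-spaces of dimension $2j+\epsilon_\chi$, and part (3) is \cite{MR1443883,MR2378075}, with the sum over $Y$ ranging over the Hermitian $\C$-spaces of that dimension, i.e. over signatures $(p,q)$ with $p+q=2j+\epsilon_\chi$.

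For the split cases (2) and (4) I would first invoke the identification recalled in Section~\ref{sec:deg-prin-rep}: there $G(W)\cong\GL_{2n}(F_v)$ (resp. $\GL_{2n}(\C)$) and $I(s,\chi)$ becomes $\Ind_{P_{n,n}}^{\GL_{2n}(F_v)}\chi^{(1)}|\ |_F^{s}\otimes(\chi^{(2)})^{-1}|\ |_F^{-s}$. The submodule structure of such a degenerate principal series of $\GL_{2n}$ is classical; since there is no Hasse/Witt invariant over $F_v$, there is a single bilinear space $Y$ of each dimension, so the socle at the relevant point is a single irreducible subrepresentation, which one then matches with $R_{\psi,\chi}(Y_1)$ for $\dim Y_1=2j+\epsilon_\chi$ by the same Weil-representation computation as above; here one must check that the lone constituent predicted by the $\GL$ theory is indeed the Siegel--Weil section image and not merely isomorphic to it.

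Finally, for part (5): irreducibility of each $R_{\psi,\chi}(Y)$ is part of what is proved in the cited references (it is the image of the first-term Siegel--Weil map at the point in question, equivalently the local theta lift of the trivial representation of $G(Y)$ in the convergent range), and unitarisability follows because such a lift is a subquotient of the Weil representation carrying an invariant positive-definite Hermitian form, or alternatively is realised discretely via the regularised Siegel--Weil formula; this too is recorded in \cite{MR1459856,MR1443883,MR2378075}. I expect the main obstacle to be precisely this reconciliation of conventions: the various papers use different coordinates for $\rU(n,n)$, different induction and normalisation conventions for the degenerate principal series, and different twists by $\chi$, so the genuine work is to confirm that ``$\dim Y=2j+\epsilon_\chi$'' and ``$R_{\psi,\chi}(Y)$ defined by $\omega_{\psi,\chi,\mathbf{1},W,Y}(g)\xi(0)$'' correspond exactly to the socle data in those references; once that dictionary is fixed, the four structural assertions are immediate.
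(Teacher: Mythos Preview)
Your proposal is correct and matches the paper's approach: the paper gives no proof at all for this proposition, merely stating it as a consequence of \cite{MR1459856} (non-archimedean) and \cite{MR1443883,MR2378075} (archimedean), so your plan to invoke those references and reconcile normalisations is exactly what is intended. If anything, you have been more explicit than the paper about the dictionary-checking involved.
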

\begin{rmk}
  The norm $|\ |_\C$ that we use is the square of the usual norm of $\C$, whereas the  usual norm of $\C$ is used in \cite{MR2378075}.
\end{rmk}

\section{Proof of Theorem~\ref{thm:Eis-pole-LO}}
\label{sec:pf-thm}
We return to the global case. Temporarily, let $G$ denote the quasi-split unitary group $\mathrm {U} (n,n)$. Let $B$ be its Borel subgroup and  $P$ the standard Siegel parabolic subgroup with Levi subgroup $M$ and unipotent radical $N$.

As defined in the last section, $R_{\psi,\chi} (Y_v)$ is regarded as a subquotient of $\Ind_{P (F_v)}^{G (F_v)} \chi |\ |_{E_v}^s$
for certain values of $s$, where $Y_v$  is a Hermitian space over $E_v$ with $\dim Y_v \le n$ and $\dim Y_v \equiv \epsilon_\chi \pmod 2$.

 Let $\cC = \{Y_v\}_v$ be a collection of such Hermitian spaces $Y_v$ of the same dimension and  let $\Pi_{\psi,\chi} (\cC)$ denote the representation
\begin{equation*}
  \otimes_v R_{\psi,\chi} (Y_v).
\end{equation*}
 When the collection $\cC$ comes from a global Hermitian space $Y$ over $E$, then we will denote $\Pi_{\psi,\chi} (\cC)$ by $\Pi_{\psi,\chi} (Y)$. When $\cC$ does not come from a global Hermitian space then we say that $\cC$ is incoherent. We  note that $\cC$ is coherent if and only if $\prod_v \epsilon_{E_v/F_v} (\disc (Y_v))=1$.

 \begin{prop}\label{prop:no-incoherent}
  Assume that  $\cC$ is an incoherent collection of Hermitian spaces of dimension $\ell$ with $0< \ell \le n$ and $\ell \equiv \epsilon_\chi \pmod 2$. Then
  \begin{equation*}
    \dim \Hom_{G} (\Pi_{\psi,\chi} (\cC),\cA (G)) =0.
  \end{equation*}
\end{prop}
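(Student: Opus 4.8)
The plan is to argue by contradiction via the Siegel--Weil formula, exploiting the fact that a nonzero $G$-equivariant map from $\Pi_{\psi,\chi}(\cC)$ into $\cA(G)$ would force the existence of a global Hermitian space realizing $\cC$ at (almost) every place. First I would suppose that $\Hom_G(\Pi_{\psi,\chi}(\cC),\cA(G)) \neq 0$ and fix a nonzero automorphic realization; concretely this produces a nonzero automorphic form $\varphi$ on $G(\A) = \mathrm{U}(n,n)(\A)$ whose local components at every $v$ span $R_{\psi,\chi}(Y_v)$. Writing $s = \tfrac12(n-\epsilon_\chi) - j$ as in the degenerate principal series proposition, each $R_{\psi,\chi}(Y_v)$ sits inside $\Ind_{P(F_v)}^{G(F_v)} \chi|\ |_{E_v}^{-s}$, so $\varphi$ can be compared with the value (or leading term) at $s=-s$ of a Siegel Eisenstein series $E^{P}(g,s,\Phi)$ built from a global section $\Phi = \otimes_v \Phi_v$ with $\Phi_v$ in the image of the local Siegel--Weil map. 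The residue/value of such an Eisenstein series is governed by the regularized Siegel--Weil formula (in the unitary setting, by the work of Kudla--Rallis and Gan--Qiu--Takeda cited in the introduction), which identifies it with a theta integral over $G(Y)(F)\backslash G(Y)(\A)$ for a genuine global Hermitian space $Y$ of dimension $\ell = 2j+\epsilon_\chi$.

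The key steps, in order, are: (i) translate the hypothesis $\Hom_G \neq 0$ into a statement about the image in $\cA(G)$ of the restriction to $G$ of a global Siegel--Weil section, using multiplicity-one type properties of the local pieces $R_{\psi,\chi}(Y_v)$ and the fact (stated in the degenerate principal series proposition) that they are irreducible and unitarizable; (ii) observe that the restricted tensor product $\otimes_v' R_{\psi,\chi}(Y_v)$ can only be automorphic through the (regularized) Siegel--Weil construction, i.e. as the span of special values of theta integrals $I(g,\xi)$ with $\xi \in \cS((Y\otimes_E W^+)(\A))$; (iii) note that the theta integral requires a \emph{global} Hermitian space $Y$ whose localizations are the prescribed $Y_v$, which exists precisely when $\cC$ is coherent, i.e. $\prod_v \epsilon_{E_v/F_v}(\disc(Y_v)) = 1$; (iv) conclude that incoherence of $\cC$ forces the putative automorphic realization to vanish, contradicting $\Hom_G \neq 0$. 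A clean way to package (ii)--(iii) is the standard incoherence vanishing argument: the sum over $G(W)(F)\backslash G(W)(\A)$ defining the Eisenstein series, when its section is Siegel--Weil at every place, telescopes (after Poisson summation / the basic identity) into a sum over rational points of a quadratic-type space that is empty exactly when $\cC$ is incoherent, so the Eisenstein series has no pole and its leading term is zero along the relevant flag.

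The main obstacle I anticipate is step (ii): showing that \emph{every} element of $\Hom_G(\Pi_{\psi,\chi}(\cC),\cA(G))$ arises from the Siegel--Weil section and not from some other automorphic realization of the same abstract representation. This is where one needs the full strength of the classification of the automorphic spectrum supported on $\Ind_P^G \chi|\ |^{s}$ — essentially the statement that the only square-integrable (or, more precisely, the only) automorphic realizations of these specific unitarizable subquotients of degenerate principal series are the Siegel--Weil ones; this can be extracted from the analysis of poles of $E^{P}(g,s,\Phi)$ in \cite{MR1692899} (invoked above in Prop.~\ref{prop:Siegel-eis-pole}) together with the regularized Siegel--Weil formula, but making the ``incoherent $\Rightarrow$ no pole $\Rightarrow$ no automorphic realization'' implication airtight requires care about leading terms versus residues and about the archimedean places. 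Once that is in place, the coherence/incoherence dichotomy $\prod_v \epsilon_{E_v/F_v}(\disc(Y_v)) = 1$ does the rest, and the proposition follows.
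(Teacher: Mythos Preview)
Your approach via Siegel--Weil and the classification of automorphic realizations is not the one the paper takes, and it has a genuine gap at precisely the step you flag. The difficulty in (ii) is not merely technical: you want to show that \emph{any} automorphic realization of $\Pi_{\psi,\chi}(\cC)$ must arise through the Siegel--Weil machinery, but the Siegel--Weil formula only describes specific automorphic forms (residues of Eisenstein series, regularized theta integrals), not the full space $\Hom_G(\Pi_{\psi,\chi}(\cC),\cA(G))$. In the paper the logical flow in fact runs the other way: Proposition~\ref{prop:no-incoherent} is an input used, together with square-integrability of the residue, to conclude that the residue map $A_{-1}$ factors through coherent collections. Invoking a classification of automorphic realizations of these subquotients to prove the proposition would be circular, or would require an independent multiplicity statement you have not supplied. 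Your ``clean packaging'' via Poisson summation on an Eisenstein series is still an argument about one particular family of automorphic forms, not about an arbitrary $D$.

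The paper's argument is much more direct and bypasses this issue. Given a nonzero $D\in\Hom_G(\Pi_{\psi,\chi}(\cC),\cA(G))$, one composes $D$ with the $\beta$-th Fourier (Whittaker) functional $W_\beta$ along the Siegel unipotent $N$, for $\beta\in\Her_n(F)$. An argument of Li and Kudla--Rallis shows that if $D\neq 0$ then $W_\beta\circ D\neq 0$ for some $\beta$ of rank exactly $\ell$. Now $W_\beta\circ D$ factors through the local $\psi_\beta$-Whittaker functionals on each $R_{\psi,\chi}(Y_v)$, and a local computation (Kudla--Sweet) shows these are nonzero only when $\beta$ is represented by $Y_v$, i.e.\ $\disc(Y_v)\equiv\pdet(\beta)\bmod\bN E_v^\times$. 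The product formula then forces
\[
\prod_v\epsilon_{E_v/F_v}(\disc(Y_v))=\prod_v\epsilon_{E_v/F_v}(\pdet\beta)=1,
\]
contradicting incoherence. This Fourier-coefficient argument applies to \emph{every} $D$, with no need to identify its image with a Siegel--Weil section or to know anything about the automorphic spectrum beyond the existence of a Fourier expansion.
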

\begin{proof}
 Given  an $n\times n $-Hermitian matrix $\beta$ with entries in $E_v$, let $\psi_\beta$  be the character of $N$ given by
  \begin{equation*}
    \psi_\beta (n (b)) = \psi (\frac{1}{2}\tr_{E/F} (\tr (\beta b)) ).
  \end{equation*}
Let $W_\beta$ be the Whittaker functional on $\cA (G)$ defined as follows:
\begin{equation*}
  W_\beta (f) = \int_{[N]} f (n)\psi_\beta^{-1} (n)dn.
\end{equation*}
Assume that $D \in \Hom_{G} (\Pi_{\psi,\chi} (\cC),\cA (G))$. In order for $D$ to be nonzero, by an analogous argument of the proof of \cite [Lemma~2.2] {MR1150600} or \cite [Thm.~3.1] {kudla94:_regul_siegel_weil_formul},  there must exist a $\beta \in \Her_n (F)$ with $\rank \beta = \ell$  such that $W_\beta\circ D \neq 0$. Let $v$ be a place of $F$. Then by \cite [Lemma~4.4] {MR1459856}, $W_\beta\circ D_v$ can be nonzero if and only if $\beta$ is represented by $Y_v$.  Thus we need that $\disc (Y_v)$ is equal to the pseudo-determinant $\pdet (\beta)$ of $\beta$ modulo $\bN E_v^\times$. Then
 \begin{equation*}
   \prod_v \epsilon_{E_v/F_v} (\disc (Y_v)) =    \prod_v \epsilon_{E_v/F_v} (\pdet\beta) =1
 \end{equation*}
by the product formula. This contradicts the fact that $\cC$ is incoherent. Thus $D$ must be the zero map.
\end{proof}

\begin{prop} \label{prop:residue-square-integrable}
  Let $s_0 = \frac{1}{2} (n-\ell) \in \Xi_{n} (\chi)$. For  a $K$-finite section $\Phi_s$ of $\Ind_{P (\A)}^{G (\A)}\chi|\ |_{\A_E}^s$,
the residue $ \res_{s=s_0} E^{P} (g,\Phi_s) $ is a square-integrable automorphic form.
\end{prop}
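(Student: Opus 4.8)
The plan is to verify Langlands' criterion for square-integrability (\cite{MR1361168}) by computing the constant term of $\res_{s=s_0}E^{P}(g,\Phi_s)$ along every proper standard parabolic subgroup $P'$ of $G=\mathrm U(n,n)$, showing that the only non-vanishing such constant term is the one along the Siegel parabolic $P$ itself, and that there the exponent lies in the range that forces membership in $L^2$.

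First I would record the constant term along $P$. Since $P$ is maximal and the relative Weyl set $W(\mathfrak{a}_M,\mathfrak{a}_M)$ consists of the identity together with a single non-trivial element $w$ (whose intertwining operator $M(w,s)$ carries the poles of $E^{P}$, cf. Prop.~\ref{prop:Siegel-eis-pole}), the constant term of $E^{P}(\cdot,s,\Phi_s)$ along $P$ is the two-term sum $\Phi_s+M(w,s)\Phi_s$. As $\Phi_s$ is entire and $s_0>0$, taking residues kills the first term:
\begin{equation*}
  \bigl(\res_{s=s_0}E^{P}(\cdot,s,\Phi_s)\bigr)_P=\res_{s=s_0}M(w,s)\Phi_s .
\end{equation*}
This is a section of the degenerate principal series induced from $P$ whose only exponent along $\mathfrak{a}_M$, in the normalisation of Subsection~\ref{sec:pole-eis-Qa}, is $-s_0\til\alpha$, with $\til\alpha$ the fundamental weight attached to the simple root defining $P$. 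Since $s_0\in\Xi_n(\chi)$ forces $s_0>0$, this exponent has real part strictly in the open negative cone generated by $\til\alpha$, which is precisely what the square-integrability criterion demands at the parabolic $P$.

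Next I would dispose of every other proper standard parabolic $P'=M'N'$. Any such $P'\neq P$ is contained in a standard maximal parabolic $P''\neq P$: if $P'$ is itself maximal this is clear, and if $P'$ is not maximal then it is contained in $\dim\mathfrak{a}_{M'}\ge 2$ standard maximal parabolics, at most one of which can be $P$. By transitivity of constant terms, $\bigl(E^{P}\bigr)_{P'}=\bigl((E^{P})_{P''}\bigr)_{P'\cap M''}$, so it suffices to show that $\bigl(E^{P}(\cdot,s,\Phi_s)\bigr)_{P''}$ vanishes identically for each standard maximal $P''\neq P$. For such $P''$ the constant term is a sum of intertwining operators indexed by $\{w'\in W:w'\mathfrak{a}_M=\mathfrak{a}_{M''}\}$, and this set is empty because the Levi $M''$ of $P''$ is not associate to the Siegel Levi $R_{E/F}\GL_n$ (no other standard Levi of $\mathrm U(n,n)$ is associate to it). Hence $\bigl(\res_{s=s_0}E^{P}\bigr)_{P'}=0$ for every proper standard $P'\neq P$, and the square-integrability criterion is vacuous at all of these parabolics. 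Combined with the previous paragraph, this shows that $\res_{s=s_0}E^{P}(g,\Phi_s)\in\cA(G)$ is square-integrable.

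The step I expect to require the most care is the bookkeeping that makes the two displayed facts precise for a degenerate, non-cuspidally-supported Eisenstein series: identifying the contributing Weyl elements in each constant-term formula, checking that the Siegel Levi is self-associate while no other standard Levi of $\mathrm U(n,n)$ is associate to it, and matching the normalisations relating $\rho_P$, $\til\alpha$ and the point $s_0\in\Xi_n(\chi)$ so that the single surviving exponent $-s_0\til\alpha$ genuinely certifies $L^2$-membership. An alternative route, also available here, would invoke the regularised Siegel--Weil formula to identify the residue with a theta integral attached to a Hermitian space of dimension $\ell$ (the associated collection being coherent by Prop.~\ref{prop:no-incoherent}); but the constant-term argument is self-contained and uses only Prop.~\ref{prop:Siegel-eis-pole}.
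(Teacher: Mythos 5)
The proposal has a genuine gap: it applies the constant-term formula for \emph{cuspidal} Eisenstein series to a \emph{degenerate} Eisenstein series, where it does not hold.

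The section $\Phi_s$ is induced from the character $\chi\circ\det$ of the Siegel Levi $R_{E/F}\GL_n$, and a character of $\GL_n$ is not a cusp form on $\GL_n$ once $n\ge 2$ (its constant terms along proper parabolics of $\GL_n$ do not vanish). Consequently, the assertion that the constant term of $E^{P}$ along a standard maximal parabolic $P''\ne P$ is indexed by $\{w':w'\mathfrak a_M=\mathfrak a_{M''}\}$, and hence vanishes because the Levis are not associate, is false here: that description is valid only when the inducing data on $M$ is cuspidal. The constant term $(E^{P})_{P''}$ is in fact nonzero — it is a degenerate Eisenstein series on $M''$ — and the same issue undermines the claim that $(E^{P})_{P}$ consists of just the two terms $\Phi_s + M(w,s)\Phi_s$. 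The attempt to ``dispose of every other proper standard parabolic'' therefore collapses, and with it the whole second half of the argument. What is true, and what the paper actually uses, is that $E^P$ (and its residue) is \emph{concentrated on the Borel $B$}: its cuspidal support lies on $B$, so the Langlands $L^2$-criterion reduces to checking that the exponents of the constant term along $B$ lie in the negative obtuse Weyl chamber. That constant term $E^P_B$ is a sum over \emph{all} Weyl coset representatives $w_T$, $T\subset\{1,\dots,n\}$, each contributing a term with exponents $(x_1,\dots,x_n)$ relative to the torus, and the paper proves that whenever $\sum_{k\le b}x_k\ge 0$ for some $b$ the corresponding $M(w_T,s)\Phi_s$ is actually holomorphic at $s_0$ (via the explicit Gindikin--Karpelevich $L$-factors), so such $w_T$ drop out of the residue. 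You never compute $E^P_B$, never consider the $w_T$'s, and only produce the single exponent $-s_0\tilde\alpha$ along $\mathfrak a_M$, which is not enough to apply the $L^2$-criterion for a form whose cuspidal support is $B$. (You also omit the case $\ell=0$, which the paper treats separately by identifying the residue with the one-dimensional representation $\nu_\chi\circ\det$.)
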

\begin{proof}
When $\ell=0$ the space of residues is isomorphic to the one-dimensional representation $\nu_\chi\circ \det$. Thus we assume $\ell>0$.

First we note that $ \res_{s=s_0} E^{P} ( g,\Phi_s) $ is concentrated on the Borel subgroup $B$ of $G $. Thus by a criterion of Langlands we only need to check if the exponents of the constant term of $ \res_{s=s_0} E^{P} ( g,\Phi_s) $ along $B$ lie in the negative obtuse Weyl chamber  of $G$.
The constant term $E^{P}_B ( g,\Phi_s)$ is given by
\begin{equation*}
    \sum_{w_T} M (w_T,s)  \Phi_s (g) .
\end{equation*}
via an analogue of \eqref{eq:const-term-eis}. The sum is over all subsets $T$ of $\{1,\ldots,n\}$. We will also adopt the notation introduced there. In particular we note that $T$ consists of elements $\{i_0<i_1<\cdots < i_t\}$ and $w_T$ is given by \eqref{eq:w_T1} and \eqref{eq:w_T2} where we change $a$ to $n$.
The section $\Phi_s$ lies in
\begin{equation*}
  \Ind_{B (\A)}^{G (\A)} \chi|\ |_{\A_E}^{s-\frac{1}{2} (n-1)}\otimes \chi|\ |_{\A_E}^{s-\frac{1}{2} (n-3)} \otimes \cdots \otimes \chi|\ |_{\A_E}^{s-\frac{1}{2} (1-n)} .
\end{equation*}
Assume that $M (w_T,s_0)  \Phi_{s_0} (g)$ lies in
\begin{equation*}
  \Ind_{B (\A)}^{G (\A)} \chi|\ |_{\A_E}^{x_1}\otimes \chi|\ |_{\A_E}^{x_2} \otimes \cdots \otimes \chi|\ |_{\A_E}^{x_n} .
\end{equation*}
It is clear that the exponents of $M (w_T,s)  \Phi_s (g)$ lie in the negative obtuse Weyl chamber if and only if $\sum_{k\le b} x_k < 0$ for all $1\le b \le n$.

Let $- e = s_0-\frac{1}{2} (n-1) = \frac{1}{2} (1-\ell)$ and $e'= s_0-\frac{1}{2} (1-n) = \frac{1}{2} (2n-\ell-1)$. Let $c$ be the maximal integer such that $i_c \le b$.  Then
\begin{equation*}
  \sum_{k\le b} x_k  = \sum_{k=1}^c (-e +k -1) - \sum_{k'=1}^{b-c} (e'-k'+1).
\end{equation*}
The second sum, if nonempty, is positive:
\begin{equation*}
   \sum_{k'=1}^{b-c} (e'-k'+1) = \frac {1} {2} (b-c) (2n-\ell- (b-c)) \ge \frac {1} {2} (b-c) (n-\ell) >0 .
\end{equation*}
If the first sum is positive, then $-e + (-e +c -1) >0$, which implies that $c> 2e +1$. In this case the unramified computation gives the normalising factor:
\begin{equation}\label{eq:L-w_T}
  \prod_{t<j\le n} \frac{L_F^S (-2e + 2j -2,\chi)}{L_F^S (-2e + 2j -2+1,\chi)}\\
  \cdot\prod_{t < j \le n}\prod_{j_{n-j+1} \le i \le t} \frac{L_E^S (-2e+i+j-2,\chi)}{L_E^S (-2e+i+j-2+1,\chi)}.
\end{equation}
We note that
\begin{align*}
  -2e + 2j -2 \ge -2e + 2t \ge -2e + 2c > -2e + 4e +2 >1,\\
  -2e+i+j-2 \ge -2e +  t \ge -2e + c > -2e + 2e +1 =1.
\end{align*}
Hence $M (w_T,s)  \Phi_{s} (g)$ is in fact holomorphic at $s=s_0$.

Now assume that the second sum is empty. Thus $c=b$. If the first sum is non-negative, from
\begin{equation*}
  \sum_{k=1}^c (-e +k -1) = \sum_{k=1}^b (-e +k -1) = \frac{1}{2}b (-2e +b -1)\ge 0,
\end{equation*}
we get $b\ge 2e+1$. We check that
\begin{align*}
  -2e + 2j -2 &\ge -2e + 2t \ge -2e + 2b \ge -2e + 4e +2 >1;\\
  -2e+i+j-2 &\ge -2e + b+1 +  t \\
            &\ge -2e + 2b +1 \ge -2e + 4e +2 +1 >1.
\end{align*}
Still using \eqref{eq:L-w_T}, we find that $M (w_T,s)  \Phi_{s} (g)$ is holomorphic at $s=s_0$. Thus we get a nonzero residue only when the exponents are in the negative obtuse Weyl chamber.
\end{proof}

Taking residue of the Eisenstein series at the point $s=\frac{1}{2} (n-\ell)$ in $\Xi_{n} (\chi)$ yields a $G (\A)$-intertwining map
\begin{equation*}
  A_{-1}: \Ind_{P (\A)}^{G (\A)}\chi|\ |_{\A_E}^s \rightarrow \cA (G).
\end{equation*}

\begin{cor}
  When $0\le \ell < n$, the $G (\A)$-intertwining map $A_{-1}$ factors through
\begin{equation*}
  \oplus_{Y} \Pi_{\psi,\chi} (Y)
\end{equation*}
where $Y$ runs over global Hermitian spaces of dimension $\ell$.
\end{cor}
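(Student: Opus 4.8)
Set $s_0=\tfrac{1}{2}(n-\ell)>0$ and $\ell=2j+\epsilon_\chi$, so that (when $\ell>0$) the point $s_0=\tfrac{1}{2}(n-\epsilon_\chi)-j$ is exactly the one to which the structure proposition of Subsection~\ref{sec:deg-prin-rep} applies. The plan is to view the image of $A_{-1}$ as a submodule of the discrete automorphic spectrum, decompose it into irreducibles, identify each local component from that structure proposition, and then use Proposition~\ref{prop:no-incoherent} to rule out the incoherent collections. (The case $\ell=0$, where $s_0=n/2$ lies outside the range of the structure proposition, is immediate: by the proof of Proposition~\ref{prop:residue-square-integrable} the residue space is $\nu_\chi\circ\det$, which is $\Pi_{\psi,\chi}(Y)$ for the zero Hermitian space $Y$. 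So assume $\ell>0$.)

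By Proposition~\ref{prop:residue-square-integrable} the image of $A_{-1}$ is contained in the space of square-integrable automorphic forms, which decomposes discretely; hence the image is a semisimple admissible $G(\A)$-module, and I would write it as $\bigoplus_i\pi^{(i)}$ with each $\pi^{(i)}=\bigotimes_v\pi^{(i)}_v$ irreducible and unitarisable. Since $A_{-1}$ is $G(\A)$-equivariant and surjects onto its image, each $\pi^{(i)}$ is a quotient of $\Ind_{P(\A)}^{G(\A)}\chi|\ |_{\A_E}^{s_0}$; fixing decomposable vectors at all but one place shows that each $\pi^{(i)}_v$ is an irreducible quotient of $\Ind_{P(F_v)}^{G(F_v)}\chi_v|\ |_{E_v}^{s_0}$.

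Next I would identify $\pi^{(i)}_v$. An irreducible quotient of $\Ind_{P(F_v)}^{G(F_v)}\chi_v|\ |_{E_v}^{s_0}$ is a summand of its maximal semisimple quotient. Dualising the structure proposition --- using that $P$ is conjugate to its opposite by the long Weyl element, that parabolic induction commutes with passage to the smooth contragredient, and that each $R_{\psi,\chi}(Y_v)$ is unitarisable by part (5) of that proposition, hence self-dual up to the corresponding replacement of the character data --- one obtains that this maximal semisimple quotient equals $\bigoplus_{Y_v}R_{\psi,\chi}(Y_v)$, the sum over the local Hermitian spaces $Y_v$ with $\dim Y_v=\ell$ (over $E_v$, or over $F_v$ at split places); the archimedean places are handled in the same way using parts (3)--(4). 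Consequently $\pi^{(i)}_v\cong R_{\psi,\chi}(Y^{(i)}_v)$ for a local Hermitian space $Y^{(i)}_v$ of dimension $\ell$, and $\pi^{(i)}\cong\bigotimes_v R_{\psi,\chi}(Y^{(i)}_v)=\Pi_{\psi,\chi}(\cC^{(i)})$ with $\cC^{(i)}=\{Y^{(i)}_v\}_v$.

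Finally, $\pi^{(i)}$ occurs in $\cA(G)$, so $\Hom_G(\Pi_{\psi,\chi}(\cC^{(i)}),\cA(G))\neq 0$; by Proposition~\ref{prop:no-incoherent} the collection $\cC^{(i)}$ is not incoherent, i.e.\ $\prod_v\varepsilon_{E_v/F_v}(\disc Y^{(i)}_v)=1$, so it arises from a global Hermitian space $Y^{(i)}$ of dimension $\ell$ and $\pi^{(i)}\cong\Pi_{\psi,\chi}(Y^{(i)})$. Since the local computations above globalise to a canonical surjection from $\Ind_{P(\A)}^{G(\A)}\chi|\ |_{\A_E}^{s_0}$ onto $\bigoplus_{\cC}\Pi_{\psi,\chi}(\cC)$ (the sum over all collections $\cC$ of local Hermitian spaces of dimension $\ell$), it follows that $A_{-1}$ factors through the subsum $\bigoplus_Y\Pi_{\psi,\chi}(Y)$ over the global $Y$ of dimension $\ell$, which is the assertion of the corollary. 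The delicate point is the dualisation in the third paragraph: one must check that no irreducible unitarisable constituent beyond the $R_{\psi,\chi}(Y_v)$ appears in the maximal semisimple quotient at $s_0$, and keep track of which character (namely $\chi_v$ or its Galois conjugate) accompanies the contragredient --- matters settled by the local results cited just before the structure proposition but worth making explicit.
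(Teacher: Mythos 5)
Your proof is correct and follows the same strategy as the paper's own (very terse, two-sentence) proof: square-integrability of the residue (Proposition~\ref{prop:residue-square-integrable}) makes the image a semisimple discrete automorphic module, the local factors of each irreducible summand are identified via the structure of the degenerate principal series, and incoherent collections $\cC$ are then ruled out by Proposition~\ref{prop:no-incoherent}. The extra steps you make explicit --- the separate treatment of $\ell=0$, the dualisation converting the maximal-semisimple-submodule statement at $-s_0$ into a maximal-semisimple-quotient statement at $s_0$, the place-by-place factorisation, and the self-duality of the $R_{\psi,\chi}(Y_v)$ coming from unitarisability --- are exactly the details the paper compresses into its first sentence, and your handling of them is correct.
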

\begin{proof}
By Prop.~\ref{prop:residue-square-integrable}, $A_{-1}$ factors through $\oplus_{\cC= \{Y_v\}_v} \Pi_{\psi,\chi} (Y_v)$. By Prop.\ref{prop:no-incoherent} we know that $A_{-1}$ further factors through $\oplus_{Y} \Pi_{\psi,\chi} (Y_v)$ where $Y$ is a global Hermitian space of dimension $\ell$.
\end{proof}

Now we go back to our usual notation. Recall that $s_0=\frac{1}{2} (m+1-\epsilon_\chi)-j$ is the maximal member in $ \cP_1 (\sigma,\chi)$. Then the above discussion and  the regularised Siegel-Weil formula in \cite{MR2064052} shows that if $\dim Y < m+a$, we have
\begin{multline}\label{eq:siegel-weil}
  \res_{s=s_0+\frac{1}{2} (a-1)} E^{P_a} (\iota (g_a,g),\Phi_s)\\
 =\sum_{Y:\dim Y = 2j+\epsilon_\chi} c_Y \int_{[G (Y)]} \theta_{\psi,\chi,\mathbf {1}} (\iota (g_a,g),h,\omega_{\psi,\chi,\mathbf {1}} (\alpha_Y)\phi_Y)dh.
\end{multline}
Here $\alpha_Y$ is an element in the local Hecke algebra of $G (W_a)$ at one local place
that is used to regularise the theta integral; $c_Y$ is some non-zero constant; $\phi_Y$ is some $K_{G (W_a)}$-finite Schwartz function in $\cS (Y^{m+a} (\A_E))$.

Finally we are ready to complete the proof of Thm.~\ref{thm:Eis-pole-LO}.
\begin{proof}[Proof of Thm.~\ref{thm:Eis-pole-LO}]
  Assume that $s=s_0=\frac{1}{2} (m+1-\epsilon_\chi)-j$ is the maximal member in $\cP_1 (\sigma,\chi)$. By Prop.~\ref{prop:pole1->polea}, $s=\frac{1}{2} (m+a-\epsilon_\chi)-j$ is a member in $\cP_a (\sigma,\chi)$. Thus by Lemma~\ref{lemma:section-general-enough}, there exists a section of $\cA_a (s,\chi,\sigma)$ of the form $\Phi_{f,s}$ (c.f. \eqref{eq:Phi_fs}) such that $E^{Q_a} (g,s,\Phi_{f,s})$ has a pole at $s=s_0$. Thus by Prop.~\ref{prop:EPa-EQa}, the Siegel Eisenstein series $E^{P_a}(\iota(g_a,g),s,\Phi)$ must have a pole at $s=\frac{1}{2} (m+a-\epsilon_\chi)-j$ for the section $\Phi$ of $\Ind_{P_a(\A)}^{G(W_a)(\A)}\chi |\det |_{\A_E}^s$. Then by \eqref{eq:siegel-weil}
 for $a$ big enough, there exists a Hermitian space $Y$ with
$\dim Y = 2j +\epsilon_\chi$ such that
\begin{equation}\label{eq:theta-integral}
   \int_{[G (Y)]} \theta_{\psi,\chi,\mathbf {1}} (\iota (g_a,g),h,\omega_{\psi,\chi,\mathbf {1}} (\alpha_Y)\phi_Y)dh
\end{equation}
is non-vanishing for some $\phi_Y \in \cS (Y^{m+a} (\A_E))$.
We may just assume that $\omega_{\psi,\chi,\mathbf {1}} (\alpha_Y)\phi_Y = \phi_Y^{(1)}\otimes \phi_Y^{(2)}$ for $\phi_Y^{(1)} \in \cS ((R_{E/F} (Y\otimes_E X_a))^+ (\A) )$ and $\phi_Y^{(2)} \in \cS ((R_{E/F} (Y\otimes_E X))^+ (\A) )$ where $^+$ denotes taking a maximal isotropic subspace. Let $\chi_2$ be a character of $E^\times\lmod \A_E^\times$ such that $\chi_2|_{\A_F^\times} = \epsilon_{E/F}^{m}$.
We separate variables in \eqref{eq:theta-integral} to get (c.f. \cite [Eq.~(5.3), (5.4)] {MR3071813})
\begin{equation*}
  \int_{[ G (Y)]} \chi_2^{-1} (\det h) \theta_{\psi,\chi,\chi_2} (g_a,h,\phi_Y^{(1)})
    \theta_{\psi,\chi,\chi_2} (g,h,\phi_Y^{(2)}) dh  .
\end{equation*}
Thus the residue $\res_{s=s_0+\frac{1}{2} (a-1)} E^{Q_a} ( g_a,\Phi_{f,s})$ is equal to
\begin{align*}
  &\int_{[G (X)]} \int_{[ G (Y)]} \chi_2^{-1} (\det h) \theta_{\psi,\chi,\chi_2} (g_a,h,\phi_Y^{(1)})
    \theta_{\psi,\chi,\chi_2} (g,h,\phi_Y^{(2)}) f (g) dh  dg\\
=& \int_{[ G (Y)]} \chi_2^{-1} (\det h) \theta_{\psi,\chi,\chi_2} (g_a,h,\phi_Y^{(1)}) \int_{[G (X)]}
    \theta_{\psi,\chi,\chi_2} (g,h,\phi_Y^{(2)}) f (g) dg dh . \\
\end{align*}
The non-vanishing of the residue implies that the inner integral is non-vanishing.
 Since it is exactly the theta lift of $f\in \sigma\otimes\chi^{-1}$ from $G (X)$ to $G (Y)$, we obtain that
 $\LO_{\psi,\chi,\chi_2} (\sigma\otimes\chi^{-1}) \le 2j+\epsilon_\chi$ for any $\chi_2$ with $\epsilon_{\chi_2}\equiv m \pmod 2$. Thus we conclude the second part of the theorem.

For the third part, assume that the lowest occurrence is realised in the Witt tower of $Y$ so that $\FO_{\psi,\chi,\chi_2}^Y (\sigma\otimes\chi^{-1}) = 2j'+ \epsilon_\chi$ for $j'\le j$. Let $\pi = \theta_{\psi,\chi,\chi_2,X}^Z (\sigma\otimes\chi^{-1})$ for $Z$ in the Witt tower of $Y$ with dimension $2j'+ \epsilon_\chi$. Then $\pi$ is cuspidal and hence by \cite [Thm.~5.3] {MR3071813} is irreducible.   It is known that $\FO_{\psi^{-1},\chi,\chi_2,Z}^{X} (\chi_2^{-1}  \otimes \pi) \le m_0 + 2( 2j'+ \epsilon_\chi)$ where $m_0$ is the dimension of the anisotropic kernel of $X$. On the other hand by \cite [Thm.~5.2] {MR3071813}
\begin{equation*}
  \theta_{\psi^{-1},\chi,\chi_2,Z}^{X} (\chi_2^{-1}  \otimes \pi) =
\theta_{\psi^{-1},\chi,\chi_2,Z}^{X} (\chi_2^{-1} \otimes \theta_{\psi,\chi,\chi_2,X}^Z (\sigma\otimes\chi^{-1})) = \sigma.
\end{equation*}
Thus $m\le m_0 + 2( 2j'+ \epsilon_\chi)$ from which we conclude that $r_X \le 2j'+ \epsilon_\chi \le 2j + \epsilon_\chi $.
\end{proof}

\section{Certain Periods and Theta Correspondences}
\label{sec:period}
\subsection{Periods and the first occurrence}
Let $\sigma\in\CA_\cusp(G(X))$ and $Y_0$ a (possibly trivial) anisotropic Hermitian space. We will consider the first occurrence of $\sigma$ rather than $\sigma\otimes\chi^{-1}$ in this section. The first occurrence $\FO_{\psi,\chi_1,\chi_2,X}^{Y_0} (\sigma)$ puts constraints on periods of the product of a cuspidal automorphic form in $\sigma$ and a theta series on $X$ and $Y_0$.  In some sense, $\sigma$ is $\theta_{\psi,\chi_1,\chi_2,X,Y_0} $-distinguished. To alleviate notation, we will generally suppress the dependence of the Weil representation on $\psi$, $\chi_1$ and $\chi_2$ and  taking $F$-points of the various unitary groups in this section.

\begin{prop}\label{prop:period-X->Y}
  Let $\sigma\in\CA_\cusp(G(X))$ and $Y_0$ a (possibly trivial) anisotropic Hermitian space. Assume that
  \begin{equation*}
    \FO_{\psi,\chi_1,\chi_2,X}^{Y_0} (\sigma) = \dim Y_0 + 2r
  \end{equation*}
 with $r \le \dim X$ and that the first occurrence is realised by $Y$.  Then
  the following hold.
  \begin{enumerate}
  \item There exist a non-degenerate subspace $Z$  of $X$ with dimension $\dim X - r$, a cusp form $f\in \sigma$ and a Bruhat-Schwartz function
  $\phi \in \cS (R_{E/F} (Y_0\otimes X)^+ (\A))$ such that
  \begin{equation}
    \label{eq:key-period}
    \int_{[G (Z)]} f (g) \theta_{\psi,\chi_1,\chi_2,X,Y_0} (g,1,\phi) dg
  \end{equation}
is non-vanishing.
\item For any subspace $Z$ of $X$ with dimension greater than $\dim X - r$, the analogous period integral always vanishes.
\item The period integrals of the form \eqref{eq:key-period} converge absolutely.
  \end{enumerate}
\end{prop}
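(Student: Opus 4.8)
The plan is to reduce the period integral in \eqref{eq:key-period} to a computation with the theta lift and its Fourier coefficients, following the pattern of \cite[Section 5, 6]{MR2540878} but adapted to the unitary situation and to the new family of periods coming from Li's work \cite{MR1168122}. Since $\FO_{\psi,\chi_1,\chi_2,X}^{Y_0}(\sigma) = \dim Y_0 + 2r$, the theta lift $\pi := \theta_{\psi,\chi_1,\chi_2,X}^{Y}(\sigma)$ to the space $Y = (Y_0)_r$ of dimension $\dim Y_0 + 2r$ is nonzero and cuspidal, and by the tower property all lifts to lower members of the Witt tower vanish. First I would unwind the definition: a typical vector in $\pi$ is $\theta_{\psi,\chi_1,\chi_2,X}^Y(h,f,\xi) = \int_{[G(X)]}\theta_{\psi,\chi_1,\chi_2,X,Y}(g,h,\xi)f(g)\,dg$, and the nonvanishing of $\pi$ means this is a nonzero cusp form on $[G(Y)]$ for some choice of $f$ and $\xi$. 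The idea is then to compute a Fourier coefficient of this cusp form along a suitable unipotent subgroup of $G(Y)$ attached to the flag exhibiting $Y$ as $Y_0$ with $r$ hyperbolic planes adjoined; because $\pi$ is cuspidal and nonzero, some nondegenerate Fourier coefficient (indexed by a nondegenerate Hermitian form of rank governed by $r$) is nonzero, by Li's non-singularity theorem for classical groups. Writing this Fourier coefficient out and using the formula for the Fourier expansion of the theta kernel, one swaps the order of integration and recognizes the resulting inner integral over a smaller unitary group $G(Z)$, where $Z \subset X$ is the orthogonal complement (inside $X$) of an isotropic-plus-anisotropic piece matched to the Fourier datum; this gives exactly \eqref{eq:key-period}, establishing part (1).

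For part (2), the vanishing for $\dim Z > \dim X - r$ should come from the tower property in the other direction: a nonvanishing period over a larger $G(Z)$ would, by running the same unfolding in reverse, force a nonzero theta lift of $\sigma$ to a member of the Witt tower of $Y_0$ of dimension strictly smaller than $\dim Y_0 + 2r$, contradicting the definition of first occurrence. Concretely I would argue that such a period computes (up to nonvanishing factors and the regularizing Hecke operator $\alpha_Y$ as in \eqref{eq:siegel-weil}) a Fourier coefficient, along a correspondingly smaller unipotent, of a theta lift to a lower Witt tower member; but first occurrence forces that lift to be zero, so the period vanishes. One must be a little careful that the relevant Fourier coefficient genuinely detects nonvanishing of the lift — again this is where Li's classification of the occurring Fourier coefficients of cusp forms on unitary groups (no singular forms) is used.

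Part (3), absolute convergence, is the technical heart and I expect it to be the main obstacle. The integrand in \eqref{eq:key-period} is a product of a cusp form $f$ on $[G(Z)]$ with the restriction to $[G(Z)]$ (embedded in $[G(X)]$) of the theta series $\theta_{\psi,\chi_1,\chi_2,X,Y_0}(g,1,\phi)$; the cusp form is rapidly decreasing, but the theta series is only of moderate growth, so one needs that the product is integrable over the non-compact quotient $[G(Z)]$. The plan is to bound the theta series by a finite sum of (absolute values of) theta series attached to positive-definite majorants of the form on $R_{E/F}(Y_0\otimes X)$, i.e. by a Siegel-type majorant argument, showing it has at most polynomial growth in a Siegel domain, and then to invoke that a cusp form decays faster than the inverse of any polynomial. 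This is the place where, as noted in the introduction, ``the original proof in \cite{MR2540878} needs more details'': one must track that after restricting $g$ to lie in $G(Z) \subset G(X)$ and setting the $G(Y_0)$ variable to $1$, the majorant estimate still holds uniformly. I would handle this by choosing a maximal compact and a Siegel domain for $G(Z)(\A)$ compatibly with one for $G(X)(\A)$, writing $g = n a k$ in the Iwasawa decomposition, and estimating $|\omega_{\psi,\chi_1,\chi_2}(g,1)\phi(v)|$ explicitly for $v$ in a lattice, reducing to convergence of a theta-like series $\sum_{v} e^{-\pi \|a\cdot v\|^2}$ times a polynomial in the entries of $a$; this converges and is dominated, uniformly for $k$ in the compact, by a fixed polynomial in $\|a\|$, after which rapid decay of $f$ closes the argument.
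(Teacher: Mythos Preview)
Your overall strategy matches the paper's: compute a nondegenerate Fourier coefficient of the cuspidal first-occurrence lift along the unipotent radical attached to the $r$ hyperbolic planes in $Y$, unfold the theta kernel, and recognize the inner integral as the period \eqref{eq:key-period}; Li's non-singularity theorem then gives part (1), and running the same identity with $Y$ replaced by a lower member of the tower gives part (2). A few corrections, however.

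For part (3) you have significantly overestimated the difficulty. The cusp form $f$ lies in $\sigma$ on $G(X)$, not on $G(Z)$; it is therefore rapidly decreasing on a Siegel domain of $G(X)$, and since the theta series has moderate growth the product is rapidly decreasing there. One then only needs that a Siegel domain for $G(Z)$ sits inside some Siegel domain for $G(X)$, which is straightforward. The paper proves convergence in three sentences along these lines; the majorant estimate you outline is correct but unnecessary. The remark in the introduction about ``more details to justify the convergence'' refers to the truncated periods of \emph{residues of Eisenstein series} treated in the sequel papers, not to the cuspidal period here.

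For part (2), neither Li's theorem nor the regularizing Hecke operator $\alpha_Y$ enters. The earlier theta lift (to $Y_{-a}$) is identically zero by the definition of first occurrence, so all of its Fourier coefficients vanish; the point is only that a nonvanishing period would allow you to \emph{build} a nonzero Fourier coefficient of that earlier lift by choosing the remaining Schwartz data appropriately, which is a direct consequence of the unfolding identity read in reverse. Also, in part (1) your description of $Z$ as the complement of ``an isotropic-plus-anisotropic piece'' is slightly off: the Fourier index $c$ is a nondegenerate $r\times r$ Hermitian matrix, so $Z$ is simply the orthogonal complement in $X$ of the nondegenerate $r$-dimensional subspace spanned by a fixed representative of $2c$.
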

\begin{rmk}
  If $Y_0= \{0\}$ then the theta series reduces to a constant times $\nu_{\chi_1}\circ \det (g)$. Compare with Prop.~5.2 in \cite{MR2540878}. When $r=0$ the period integral  is just the integral for theta lift of $\sigma$ to $G (Y_0)$.
In fact the above integrals can be replaced by
\begin{equation*}
  \int_{[G (Z)]} f (g) \theta_{\psi,\chi_1,\chi_2,Z,Y_0} (g,1,\phi) dg.
\end{equation*}
\end{rmk}
The symmetric version where we consider theta lift from $G (Y)$ to $G (X)$ is as follows.
\begin{prop}
\label{prop:period-Y->X}
Let $\sigma\in\CA_\cusp(G(Y))$ and $X_0$ a (possibly trivial) anisotropic skew-Hermitian space. Assume that
\begin{equation*}
\FO_{\psi,\chi_1,\chi_2,Y}^{X_0} (\sigma)  = \dim X_0  + 2r
\end{equation*}
 with $r \le \dim Y$ and that the first occurrence is realised by $X$. Then the following hold.
  \begin{enumerate}
  \item There exist a non-degenerate subspace $Z$  of $Y$ with dimension $\dim Y - r$,  a cusp form $f\in \sigma$ and a Bruhat-Schwartz function $\phi \in \cS (R_{E/F} (Y \otimes X_0)^+(\A))$ such that
  \begin{equation}
    \label{eq:key-period-dual}
    \int_{[G (Z)]} f (h)\theta_{\psi,\chi_1,\chi_2,X_0,Y} (1,h,\phi)dh,
  \end{equation}
is non-vanishing.
\item For any subspace $Z$ of $Y$ with dimension greater than $\dim Y - r$, the analogous period integral always vanishes.
\item The period integrals of the form \eqref{eq:key-period-dual} converge absolutely.
  \end{enumerate}
\end{prop}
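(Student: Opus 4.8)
The plan is to deduce Proposition~\ref{prop:period-Y->X} by the same mechanism that proves Proposition~\ref{prop:period-X->Y}, interchanging the Hermitian space $Y$ with the skew-Hermitian space $X$; both rest on two inputs, namely Li's theorem on the non-existence of singular cusp forms for classical (here unitary) groups \cite{MR1168122}, and an explicit unfolding of the top Fourier coefficient of a theta lift. First I would produce the relevant cusp form: since $\FO_{\psi,\chi_1,\chi_2,Y}^{X_0}(\sigma)=\dim X_0+2r$ is realised by $X$, the space $X$ has $X_0$ as its anisotropic kernel and hence Witt index exactly $r$, and the theta lift $\Pi:=\theta_{\psi,\chi_1,\chi_2,Y}^{X}(\sigma)$ is nonzero and cuspidal on $G(X)$. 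I fix $f\in\sigma$ and a Schwartz function $\xi$ with $\Theta(g):=\theta_{\psi,\chi_1,\chi_2,Y}^{X}(g,f,\xi)\not\equiv0$, and let $Q_r=M_rN_r$ be the parabolic of $G(X)$ built exactly as in Section~\ref{sec:notation} with $X_0,r$ in the roles of $X,a$, so that $M_r\cong R_{E/F}\GL_r\times G(X_0)$. The relevant Fourier coefficients $(\Theta)_{\psi_\beta}$ of $\Theta$ along $N_r$ (a genuine Fourier coefficient when $X_0=\{0\}$, and of Fourier--Jacobi type in general) are indexed by $r\times r$ (skew-)Hermitian matrices $\beta$, and I call $\beta$ non-degenerate when $\rank\beta=r$.

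The core step will be to establish, for every non-degenerate $\beta$ that is represented by $Y$, an identity
\begin{equation*}
  (\Theta)_{\psi_\beta}(1)=c_\beta\int_{[G(Z_\beta)]}f(h)\,\theta_{\psi,\chi_1,\chi_2,X_0,Y}(1,h,\phi_\beta)\,dh,
\end{equation*}
where $Z_\beta\subset Y$ is the orthogonal complement of the non-degenerate $r$-dimensional subspace of $Y$ cut out by $\beta$ (so $\dim Z_\beta=\dim Y-r$), $c_\beta\ne0$, and $\phi_\beta\in\cS(R_{E/F}(Y\otimes X_0)^+(\A))$ is manufactured from $\xi$. To obtain it I would expand $\Theta$ from the definition of the theta kernel, integrate over $[N_r]$ against $\psi_\beta^{-1}$ using the action of $N_r$ in the Weil representation (through the Schr\"odinger model for a polarisation adapted to $\ell_r^+\subset\cH_r$), use $\rank\beta=r$ both to pin down the support of the surviving lattice sum and to collapse the outer $[G(Y)]$-integration onto $[G(Z_\beta)]$, and keep the splitting characters $\chi_1,\chi_2$ under control via \eqref{eq:change-chi2}; the leftover directions of $N_r$ recombine into the theta series $\theta_{\psi,\chi_1,\chi_2,X_0,Y}$. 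Running the identical computation with $X_0\perp\cH_{r'}$ in place of $X$ will express, for a non-degenerate $Z'\subset Y$ with $\dim Z'=\dim Y-r'$ and $r'<r$, the analogous period over $[G(Z')]$ as a nonzero multiple of a Fourier coefficient of a form in $\theta_{\psi,\chi_1,\chi_2,Y}^{X_0\perp\cH_{r'}}(\sigma)$.

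Given the identity, the three assertions follow. For (1): by Li's theorem \cite{MR1168122} the nonzero cusp form $\Theta$ on $G(X)$ is non-singular, so $(\Theta)_{\psi_\beta}\not\equiv0$ for some non-degenerate $\beta$, which must then be represented by $Y$ (otherwise the coefficient is zero); after translating $\xi$ we may assume $(\Theta)_{\psi_\beta}(1)\ne0$, and the displayed identity produces the desired period with $Z=Z_\beta$ and $\phi=\phi_\beta$. For (2): if $\dim Z'>\dim Y-r$, say $\dim Z'=\dim Y-r'$ with $r'<r$, the period over $[G(Z')]$ is a multiple of a Fourier coefficient of a form in $\theta_{\psi,\chi_1,\chi_2,Y}^{X_0\perp\cH_{r'}}(\sigma)$, and this lift vanishes because $\dim(X_0\perp\cH_{r'})<\dim X$ while $X$ realises the first occurrence; a degenerate $Z'$ of dimension $>\dim Y-r$ reduces to its non-degenerate part by cuspidality of $f$. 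For (3): $f$ is a cusp form, hence rapidly decreasing on $[G(Y)]$ and a fortiori on $[G(Z)]$, whereas $h\mapsto\theta_{\psi,\chi_1,\chi_2,X_0,Y}(1,h,\phi)$ is slowly increasing, so the integrand is rapidly decreasing on the finite-volume space $[G(Z)]$; this gives absolute convergence and, along the way, justifies the interchange of integrations in the core step.

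I expect the core computation to be the main obstacle: unfolding the top Fourier coefficient precisely enough that the output is exactly the period in the statement --- in particular verifying that a rank-$r$ datum $\beta$ really does cut $Y$ into a non-degenerate $(\dim Y-r)$-dimensional subspace together with its non-degenerate complement, keeping the Weil-representation cocycle and the splitting characters $\chi_1,\chi_2$ under control throughout the unfolding, and correctly handling the leftover Jacobi directions of $N_r$ that assemble into $\theta_{\psi,\chi_1,\chi_2,X_0,Y}$. By contrast, the appeal to Li's theorem and the convergence estimate for (3) are routine.
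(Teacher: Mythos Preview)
Your strategy matches the paper's: compute a non-singular Fourier coefficient of the cuspidal first-occurrence lift $\Theta$, unfold, and invoke Li's theorem for (1); run the same unfolding on an earlier (hence vanishing) lift for (2); use rapid decrease of $f$ on a Siegel domain for (3). Two points in your ``core step'' need adjustment, however. First, the paper integrates only over the \emph{center} $N'\cong\Her_r$ of $N_r$, not the full unipotent radical; the inner theta series $\theta_{X_0,Y}$ does not come from ``leftover directions of $N_r$'' but from the lattice summation over $w\in(R_{E/F}(Y\otimes X_0))^+(F)$ in the mixed Schr\"odinger model realised on $\cS\bigl(R_{E/F}(Y\otimes\ell_r^+)(\A)\oplus(R_{E/F}(Y\otimes X_0))^+(\A)\bigr)$. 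Second, the identity one actually obtains is not $(\Theta)_{\psi_\beta}(1)=c_\beta\cdot(\text{single period})$ but rather
\[
(\Theta)_{\psi_{E,c}}(g)=\int_{G(Z)(\A)\backslash G(Y)(\A)}\Bigl(\int_{[G(Z)]}f(h'h)\,\theta_{X_0,Y}\bigl(1,h',\omega_{X,Y}(g,h)\Phi(y^0,\cdot)\bigr)\,dh'\Bigr)dh,
\]
an outer $G(Z)(\A)\backslash G(Y)(\A)$-integral of periods with data varying in $h$ and $g$. With this correction your deductions of (1)--(3) go through exactly as you describe: nonvanishing of the left side forces some inner period to be nonzero, and conversely a nonvanishing period lets one choose $\Phi$ so that the corresponding Fourier coefficient of an earlier lift is nonzero, contradicting first occurrence.
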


\begin{proof}
To make use of Prop.~3.1 of \cite{MR3071813} with minimal change of notation, we prove the statement in Prop.~\ref{prop:period-Y->X}.

Since $f$ is cuspidal, it is rapidly decreasing over the Siegel domain of $G (Y)$. Thus the whole integrand is rapidly decreasing over the Siegel domain of $G (Y)$. It can be checked that  Siegel domain of $G (Z)$ is contained in some Siegel domain of $G (Y)$. Thus the whole integrand is rapidly decreasing over the Siegel domain of $G (Z)$. We conclude that the period integrals are absolutely convergent.

Next we relate the period integral to Fourier coefficients of theta lift.
We write $X$ as $\ell_r^+\oplus X_0 \oplus \ell_r^-$. Choose dual bases $e_1^+,\ldots, e_r^+$  for $\ell_r^+$ and $e_1^-,\ldots, e_r^-$ for $\ell_r^-$. Let $Q$ be the parabolic subgroup of $G (X)$ that stabilises $\ell_r^-$ and $N$ the unipotent radical of $Q$. Let $N'$ be the subgroup of $N$ that acts as identity on $X_0$. In fact $N'$ can be identified with $(r\times r)$-Hermitian matrices $\Her_r$.

Fix a polarisation of $R_{E/F} (Y\otimes X_0)$:
\begin{equation*}
  R_{E/F} (Y\otimes X_0) = ( R_{E/F} (Y\otimes X_0))^+ \oplus ( R_{E/F} (Y\otimes X_0))^-.
\end{equation*}
Then the  polarisation of $R_{E/F} (Y\otimes X)$ is chosen to be
\begin{align*}
  R_{E/F} (Y\otimes X)= (R_{E/F} (Y\otimes X))^+ \oplus (R_{E/F} (Y\otimes X))^-
  \end{align*}
with
\begin{align*}
  (R_{E/F} (Y\otimes X))^+ &= R_{E/F} (Y\otimes \ell_r^+) \oplus ( R_{E/F} (Y\otimes X_0))^+;\\
  (R_{E/F} (Y\otimes X))^- &= R_{E/F} (Y\otimes \ell_r^-) \oplus ( R_{E/F} (Y\otimes X_0))^-.
\end{align*}

Let $\Phi \in \cS ((R_{E/F} (Y\otimes X))^+ (\A))$. Let $c$ be a non-degenerate $(r\times r)$-Hermitian matrix.  Define an additive character of $ \Her_r(\A)$ as follows
\begin{equation*}
  \psi_{E,c} (\beta) = \psi_E (\tr (\beta c)).
\end{equation*}
Consider the $c$-th Fourier coefficient of the theta lift of $f$. It is given by
\begin{equation}\label{eq:Fourier-coeff-of-theta-lift}
  \int_{[N']} \int_{[G (Y)]} \theta_{\psi,\chi_1,\chi_2,X,Y} (ng,h,\Phi)f (h) \overline {\psi_{E,c} (\beta)} dh dn.
\end{equation}
Here $n=n (\beta) \in N' (\A)$ for $\beta \in \Her_r (\A)$.
We plug in the definition of theta series in \eqref{eq:Fourier-coeff-of-theta-lift} to get
\begin{multline*}
  \int_{[\Her_r ]} \int_{[G (Y)]} \sum_{y\in R_{E/F} (Y\otimes \ell_r^+)}
  \sum_{w\in (R_{E/F} (Y\otimes X_0))^+}
  \omega_{X,Y} (n (\beta)g,h)\Phi (y,w) \\
\cdot f (h)  \overline {\psi_{E,c} (\beta)} dh d\beta .
\end{multline*}
Since
\begin{equation*}
  \omega_{X,Y} (n (\beta),1)\Phi (y,w) = \Phi (y,w) \psi_E (\half \tr \form{y}{y}_Y\beta),
\end{equation*}
integration over $\beta$ vanishes unless $\form{y}{y}_Y = 2c$. Fix $y^0 \in Y^r$ that represents $2c$. Let $Z$ be the orthogonal complement to $y^0$ in $Y$. Then the integral above is equal to
\begin{align*}
&   \int_{[G (Y)]} \sum_{\substack{y\in R_{E/F} (Y\otimes \ell_r^+)\\ \form{y}{y}_Y = 2c}} \sum_{w\in (R_{E/F} (Y\otimes X_0))^+}
  \omega_{X,Y} (g,h)\Phi (y,w) f (h)
   dh   \\
=&  \int_{[G (Y)]} \sum_{\gamma \in G (Z) \lmod G (Y) } \sum_{w\in (R_{E/F} (Y\otimes X_0))^+}
  \omega_{X,Y} (g,h)\Phi (\gamma^{-1} y^0,w) f (h)
   dh .
\end{align*}
 Since we take summation over $w$, the above is equal to
\begin{align*}
  &  \int_{[G (Y)]} \sum_{\gamma \in G (Z)\lmod G (Y)} \sum_{w\in (R_{E/F} (Y\otimes X_0))^+}
  \omega_{X,Y} (g,\gamma h)\Phi (y^0,w) f (h)
   dh \\
=     &  \int_{G (Z) (F)\lmod G (Y) (\A)}  \sum_{w\in (R_{E/F} (Y\otimes X_0))^+}\omega_{X,Y} (g,h)\Phi (y^0,w) f (h)dh   \\
=& \int_{G (Z) (\A)\lmod G (Y) (\A)} \int_{[G (Z)]}  \sum_{w}\omega_{X,Y} (g, h' h)\Phi (y^0,w) f (h' h)dh' dh \\
=&\int_{G (Z) (\A)\lmod G (Y) (\A)} \int_{[G (Z)]}  f (h'h)\theta_{X_0,Y} (1, h', \omega_{X,Y} (g, h)\Phi (y^0,\cdot)) dh' dh.
\end{align*}
 The inner integral over $h'$ is of the form
\begin{align}\label{eq:inner-integral}
  \int_{[G (Z)]}  f (h')\theta_{X_0,Y} (1, h', \phi) dh'
\end{align}
for some $f\in\sigma$ and $\phi \in \cS ((R_{E/F} (Y\otimes X_0))^+ (\A))$.

By the Main Theorem of \cite{MR1168122}, for any cuspidal automorphic form there  always exists a  non-singular Fourier coefficient that does not vanish. Since we are taking Fourier coefficient of  the first occurrence representation, which is cuspidal,  the Fourier coefficient \eqref{eq:Fourier-coeff-of-theta-lift} is non-vanishing for some choice of data. It follows that the inner integral \eqref{eq:inner-integral} is non-vanishing for some choice of data.

Now we consider the case where $\dim Z = \dim Y - (r-a)$ for $1 \le a \le r$.   Consider theta lift of $\sigma$ from $G (Y)$ to $G (X_{-a})$. This is an earlier lift than the first occurrence, so must vanish. We replace $X$ by $X_{-a}$ in the above computation. If the inner integral is non-vanishing for some choice of data then it is easy to see that  we can choose $\Phi \in \cS (R_{E/F} (Y\otimes \ell_{r-a}^+) (\A)\oplus  (R_{E/F} (  Y\otimes X_0))^+ (\A))$  so that the Fourier coefficient is non-vanishing, which leads to a contradiction. Thus \eqref{eq:key-period-dual} must vanish identically.
\end{proof}

\subsection{$(\chi,b)$-factors and periods}\label{sec:mainperiod}
We are now ready to state and prove the main result of this paper, addressing the relation between
the existence of a $(\chi,b)$-factor in the global Arthur parameter $\psi$ associated to a $\sigma\in\CA_\cusp(G(X))$ and the non-vanishing
of certain period integral of $\sigma$.

\begin{thm}\label{thm:mainperiod}
For $\sigma\in\CA_\cusp(G(X))$, let $\psi_\sigma$ be its global Arthur parameter. If a simple global Arthur parameter $(\chi,b)$, 
with a maximal possible integer $b\geq 1$, occurs
in $\psi_\sigma$ as a simple summand, then there exist a (possibly trivial) anisotropic Hermitian space $Y_0$ with $\dim Y_0 \le \dim X  -b$ and $\dim Y_0 \equiv \epsilon_\chi \mod{2}$, a non-degenerate subspace $Z$ of $X$, with its dimension satisfying
\begin{equation*}
  \frac{\dim X+b+\dim Y_0}{2}\leq \dim Z \le \dim X,
\end{equation*}
 a Bruhat-Schwartz function
  $\phi \in \cS (R_{E/F} (Y_0\otimes X)^+ (\A))$ and a cusp form $f\in \sigma\otimes\chi^{-1}$, such that the period integral
  \begin{equation}\label{eq:period-sigma}
        \int_{[G (Z)]} f (g) \theta_{\psi,\chi_1,\chi_2,X,Y_0} (g,1,\phi) dg
  \end{equation}
converges absolutely and does not vanish.
\end{thm}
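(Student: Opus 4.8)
The plan is to derive this as a consequence of Theorem~\ref{thm:poleL} and Proposition~\ref{prop:period-X->Y}, after translating the hypothesis on $\psi_\sigma$ into a statement about poles of the partial $L$-function. Write $m=\dim X$. As recalled in the introduction (following \cite[Section~8.1]{jiang14:_autom_integ_trans_class_group_i}), the occurrence of a simple summand $(\chi,b)$ with maximal $b$ in $\psi_\sigma$ is equivalent to $L^S(s,\sigma\times\chi)$ being holomorphic for $\Re(s)>\frac{b+1}{2}$ and having a simple pole at $s=\frac{b+1}{2}$. The parity constraint $b\equiv m-\epsilon_\chi\pmod 2$ on conjugate self-dual summands of an Arthur parameter of $G(X)$ (which is in any case implicit in Proposition~\ref{prop:non-siegel-eis-pole}) shows that
\[
  j:=\tfrac12(m+1-\epsilon_\chi)-\tfrac{b+1}{2}=\tfrac12(m-\epsilon_\chi-b)
\]
is a non-negative integer, with $0\le j<\tfrac12(m+1-\epsilon_\chi)$, and the pole of $L^S(s,\sigma\times\chi)$ sits at $s=\tfrac12(m+1-\epsilon_\chi)-j>0$. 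Hence Theorem~\ref{thm:poleL}(1) applies and yields
\[
  \LO_{\psi,\chi}(\sigma\otimes\chi^{-1})\ \le\ 2j+\epsilon_\chi\ =\ m-b\ =\ \dim X-b.
\]

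Next I would unwind the definition of the lowest occurrence. By definition $\LO_{\psi,\chi}(\sigma\otimes\chi^{-1})$ is the minimum, over Witt towers of Hermitian spaces $Y$ with $\dim Y\equiv\epsilon_\chi\pmod 2$, of the first occurrences $\FO^{Y}_{\psi,\chi,\chi_2,X}(\sigma\otimes\chi^{-1})$, and these are independent of the auxiliary character $\chi_2$; being a minimum over a nonempty set of non-negative integers, it is attained. Let $Y_0$ be the anisotropic kernel of a Witt tower attaining it. Then $\dim Y_0\equiv\epsilon_\chi\pmod 2$ and
\[
  \FO^{Y_0}_{\psi,\chi,\chi_2,X}(\sigma\otimes\chi^{-1})=\dim Y_0+2r\ \le\ \dim X-b
\]
for some integer $r\ge 0$. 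In particular $\dim Y_0\le\dim X-b$, and $2r\le\dim X-b-\dim Y_0\le\dim X$, so $r\le\dim X$. These are exactly the hypotheses of Proposition~\ref{prop:period-X->Y} for the cuspidal representation $\sigma\otimes\chi^{-1}\in\CA_\cusp(G(X))$, the anisotropic space $Y_0$, and characters $\chi_1=\chi$ and any $\chi_2$ with $\chi_2|_{\A^\times}=\varepsilon_{E/F}^{\dim X}$.

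Proposition~\ref{prop:period-X->Y} then produces a non-degenerate subspace $Z\subset X$ with $\dim Z=\dim X-r$, a cusp form $f\in\sigma\otimes\chi^{-1}$, and a Bruhat--Schwartz function $\phi\in\cS(R_{E/F}(Y_0\otimes X)^+(\A))$ for which the integral \eqref{eq:period-sigma} converges absolutely (part~(3)) and is non-zero (part~(1)). Finally I would read off the range of $\dim Z$: from $\dim Y_0+2r\le\dim X-b$ one gets $r\le\tfrac12(\dim X-b-\dim Y_0)$, hence
\[
  \dim Z=\dim X-r\ \ge\ \frac{\dim X+b+\dim Y_0}{2},
\]
while $\dim Z=\dim X-r\le\dim X$ since $r\ge 0$. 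This is the asserted inequality, and the theorem follows.

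The substantive content has been front-loaded into the earlier results --- Theorem~\ref{thm:poleL}, which rests on the regularised Siegel--Weil formula, and Proposition~\ref{prop:period-X->Y}, which rests on Li's theorem on non-singular Fourier coefficients of cusp forms --- so the present argument is largely bookkeeping. The one place demanding genuine care is the opening translation: one must check that the pole dictated by the $(\chi,b)$-condition indeed lands at a value of the form $\tfrac12(m+1-\epsilon_\chi)-j$ with $j\in\Z_{\ge 0}$ (the parity $b\equiv m-\epsilon_\chi\pmod 2$), and one must keep straight the character bookkeeping so that Proposition~\ref{prop:period-X->Y} is applied to $\sigma\otimes\chi^{-1}$ rather than to $\sigma$ --- which is why the resulting cusp form $f$ lives in $\sigma\otimes\chi^{-1}$.
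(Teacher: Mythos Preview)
Your proof is correct and follows essentially the same route as the paper: translate the $(\chi,b)$-hypothesis into a pole of $L^S(s,\sigma\times\chi)$ at $s=\tfrac{b+1}{2}$, invoke Theorem~\ref{thm:poleL} to bound the lowest occurrence by $\dim X-b$, then apply Proposition~\ref{prop:period-X->Y} to the tower realising that lowest occurrence and read off the bound on $\dim Z$. You supply a few details the paper leaves implicit (the parity $b\equiv m-\epsilon_\chi\pmod 2$, the check $r\le\dim X$, and the remark that the minimum defining $\LO$ is attained), but the architecture is identical.
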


\begin{proof}
The condition that a simple global Arthur parameter $(\chi,b)$ occurs
in $\psi_\sigma$ as a simple summand implies that  the partial $L$-function $L^S(s,\sigma\times\chi)$ has a simple pole at $s=\frac{b+1}{2}$ and is holomorphic for $\Re(s)>\frac{b+1}{2}$. By Thm.~\ref{thm:poleL} the lowest occurrence $\LO_{\psi,\chi} (\sigma\otimes\chi^{-1})$ is less than or equal to $\dim X -b$. Assume that the lowest occurrence is realised by the Hermitian space $Y$ of  dimension $m-b'$  with $b'\ge b$ and Witt index $r$. 
Then by Prop.~\ref{prop:period-X->Y}, there exists a non-degenerate subspace $Z$ of $X$ of dimension $\dim X - r$ such that the period integral \eqref{eq:period-sigma} converges absolutely and does not vanish for some choice of cusp form $f\in \sigma\otimes\chi^{-1}$ and Bruhat-Schwartz function  $\phi \in \cS (R_{E/F} (Y_0\otimes X)^+ (\A))$.  The dimension of  $Z$ can be seen to satisfy the following
\begin{align*}
  &\dim Z =\dim X - r = \dim X -\half (\dim Y - \dim Y_0) \\
= &\dim X -\half (\dim X -b' - \dim Y_0) = \half (\dim X +b' + \dim Y_0) \\
\ge&\half (\dim X +b + \dim Y_0).
\end{align*}
This concludes the proof of the theorem.
\end{proof}

It is clear that when $b=0$, the character $\chi$ has no relation with $\sigma$ in the framework of the endoscopic classification of
Arthur (\cite{MR3135650}).
However, the $L$-function $L^S(s,\sigma\times\chi)$ may still carry information about $\sigma$ and $\chi$. In fact, in a special case
treated in Theorem ~\ref{thm:poleL}, the case when $b=0$ is related to the condition that the partial $L$-function
$L^S(s,\sigma\times\chi)$ does not vanish at $s=\frac{1}{2}$. By Prop.~\ref{prop:period-X->Y}, we have the following result, which
complements Thm.~\ref{thm:mainperiod}.
\begin{thm}\label{thm:mainperiod-b=0}
For $\sigma\in\CA_\cusp(G(X))$,  assume that $L^S (s,\sigma \times \chi)$ is non-vanishing at $s=1/2$ and holomorphic for $\Re s > 1/2$ and that   $m-\epsilon_\chi$ is even. Then there exist a (possibly trivial) anisotropic Hermitian space $Y_0$ with $\dim Y_0 \le \dim X $ and $\dim Y_0 \equiv \epsilon_\chi \mod{2}$, a non-degenerate subspace $Z$ of $X$, with its dimension satisfying
\begin{equation*}
  \frac{\dim X + \dim Y_0}{2}\leq \dim Z \le \dim X,
\end{equation*}
 a Bruhat-Schwartz function
  $\phi \in \cS (R_{E/F} (Y_0\otimes X)^+ (\A))$ and a cusp form $f\in \sigma\otimes\chi^{-1}$, such that the period integral
  \begin{equation}\label{eq:period-sigma}
        \int_{[G (Z)]} f (g) \theta_{\psi_F,\chi_1,\chi_2,X,Y_0} (g,1,\phi) dg
  \end{equation}
converges absolutely and does not vanish.
\end{thm}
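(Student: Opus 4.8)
The plan is to run the proof of Theorem~\ref{thm:mainperiod} with $b=0$, substituting the non-vanishing branch of Theorem~\ref{thm:poleL}(1) for the pole branch; this is exactly the sense in which the present statement complements Theorem~\ref{thm:mainperiod}.

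First I would apply part~(1) of Theorem~\ref{thm:poleL}. Since $m-\epsilon_\chi$ is even, the special point $\frac{1}{2}(m+1-\epsilon_\chi)-j$ equals $1/2$ exactly for the integer $j=\frac{1}{2}(m-\epsilon_\chi)$, and for this $j$ the hypotheses of the present theorem, namely that $L^S(s,\sigma\times\chi)$ is non-vanishing at $s=1/2$ and holomorphic for $\Re s>1/2$, are precisely those of that branch. It therefore yields
\[
\LO_{\psi,\chi}(\sigma\otimes\chi^{-1})\;\le\;2j+\epsilon_\chi\;=\;m\;=\;\dim X .
\]
Next I would fix a Witt tower realising this lowest occurrence, with anisotropic kernel a Hermitian space $Y_0$, and write the associated first occurrence as $\FO_{\psi,\chi,\chi_2,X}^{Y_0}(\sigma\otimes\chi^{-1})=\dim Y_0+2r$, so that $\dim Y_0+2r\le\dim X$. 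In particular $\dim Y_0\le\dim X$, the congruence $\dim Y_0\equiv\epsilon_\chi\pmod 2$ holds by the parity convention on Witt towers, and $r\le\dim X$; hence all hypotheses of Proposition~\ref{prop:period-X->Y} are met for the cuspidal representation $\sigma\otimes\chi^{-1}\in\CA_\cusp(G(X))$ (with $\chi_1=\chi$).

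Applying Proposition~\ref{prop:period-X->Y} then produces a non-degenerate subspace $Z\subset X$ with $\dim Z=\dim X-r$, a cusp form $f\in\sigma\otimes\chi^{-1}$, and a Bruhat-Schwartz function $\phi\in\cS(R_{E/F}(Y_0\otimes X)^+(\A))$ for which the period integral \eqref{eq:period-sigma} converges absolutely and does not vanish. The asserted dimension bounds follow by the same bookkeeping as in Theorem~\ref{thm:mainperiod}: from $2r=\LO_{\psi,\chi}(\sigma\otimes\chi^{-1})-\dim Y_0\le\dim X-\dim Y_0$ one obtains $\dim Z=\dim X-r\ge\frac{1}{2}(\dim X+\dim Y_0)$, while $\dim Z\le\dim X$ since $r\ge0$.

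There is no essential obstacle: all of the real work sits in the two inputs, Theorem~\ref{thm:poleL} (which rests on the regularised Siegel-Weil formula) and Proposition~\ref{prop:period-X->Y} (which rests on Li's theorem on non-singular Fourier coefficients of cusp forms), and the present argument merely assembles them. The only points deserving a line of care are the identification $j=\frac{1}{2}(m-\epsilon_\chi)$, which pins the special point to $s=1/2$ and makes the bound come out to exactly $\dim X$, and the degenerate possibility $\dim Y_0=r=0$, which can occur when $\epsilon_\chi=0$: this case is still covered by Proposition~\ref{prop:period-X->Y}, the period then reducing to the pairing of $f\in\sigma\otimes\chi^{-1}$ with a constant multiple of $\nu_{\chi_1}\circ\det$ over $[G(X)]$.
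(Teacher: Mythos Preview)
Your proposal is correct and follows exactly the approach of the paper: invoke the non-vanishing branch of Theorem~\ref{thm:poleL}(1) to bound the lowest occurrence by $\dim X$, then rerun the proof of Theorem~\ref{thm:mainperiod} with $b=0$ via Proposition~\ref{prop:period-X->Y}. If anything, you have spelled out the dimension bookkeeping and the degenerate $Y_0=0$ case in more detail than the paper itself, which simply refers back to Theorem~\ref{thm:mainperiod}.
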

\begin{proof}
  The argument is similar to that of Thm.~\ref{thm:mainperiod}. By Thm.~\ref{thm:poleL}, the assumptions imply  that the lowest occurrence $\LO_{\psi,\chi} (\sigma\otimes\chi^{-1})$ is less than or equal to $\dim X$. Then the rest of the argument in the proof of Thm.~\ref{thm:mainperiod} goes through with $b$ replaced by $0$.
\end{proof}

\subsection{Certain useful periods}
The next  proposition shows the vanishing of some period integrals over a Jacobi group containing $G (Z)$ where $Z$ is a non-degenerate subspace of $X$ of dimension $\dim X - r$ with $r$ determined by the first occurrence, but not any $G (Z)$ where $Z$ is of dimension greater than $\dim X - r$.
\begin{prop}
  Let $\sigma\in\CA_\cusp(G(X))$ and $Y_0$ a (possibly trivial) anisotropic Hermitian space. Assume that
  \begin{equation*}
    \FO_{\psi,\chi_1,\chi_2,X}^{Y_0} (\sigma) = \dim Y_0+2r
  \end{equation*}
 with $1\le r \le \dim X$. Assume that the first occurrence is realised by $Y$.  Let $Z$ be a non-degenerate subspace of $X$ of dimension $\dim X - (r-2)$  containing  an isotropic vector  $z_0$. Let 
  $R_1^0$ be the subgroup consisting of those elements of $G (Z)$ that fix $z_0$.
Then
\begin{equation*}
  \int_{[R_1^0]} \theta_{\psi,\chi_1,\chi_2,X,Y_0} (g,1,\Phi) f (g) dg
\end{equation*}
is absolutely convergent and vanishes
  for all $\Phi \in \cS ((R_{E/F} (Y_0\otimes X))^+ (\A))$ and $f\in\sigma$.
\end{prop}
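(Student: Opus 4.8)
The plan is to follow the strategy of the proof of Proposition~\ref{prop:period-Y->X} and to reduce the claim to the vanishing statement in Proposition~\ref{prop:period-X->Y}.

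First I would dispose of absolute convergence exactly as in the proof of Proposition~\ref{prop:period-Y->X}: since $f$ is a cusp form it is rapidly decreasing on a Siegel domain of $G(X)$; a Siegel domain of $R_1^0$ is contained in a Siegel domain of $G(X)$; and the theta series $\theta_{\psi,\chi_1,\chi_2,X,Y_0}$ is of moderate growth. Hence the integrand is rapidly decreasing on a Siegel domain of $R_1^0$, and the integral converges absolutely. This convergence also legitimates the interchanges of order of integration used below.

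For the vanishing, write $Z = E z_0 \oplus Z_1 \oplus E z_0'$, where $z_0'$ is an isotropic vector with $\form{z_0}{z_0'}_Z \neq 0$ and $Z_1 = (E z_0 \oplus E z_0')^{\perp}$ is a non-degenerate subspace of $X$ with $\dim Z_1 = \dim Z - 2 = \dim X - r$. Then $R_1^0 = G(Z_1) \ltimes N_1$, where $N_1$ is the (Heisenberg) unipotent radical of the maximal parabolic subgroup of $G(Z)$ stabilising $E z_0$. I would write the period as the iterated integral
\begin{equation*}
  \int_{[G(Z_1)]} \left( \int_{[N_1]} \theta_{\psi,\chi_1,\chi_2,X,Y_0}(n g_1, 1, \Phi)\, f(n g_1)\, dn \right) dg_1
\end{equation*}
and then unfold the theta series along $N_1$. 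Since $z_0$ is isotropic and $Y_0$ is anisotropic, a direct computation in the chosen mixed model of the Weil representation shows that the centre of $N_1$ acts through the stabiliser of the chosen maximal isotropic subspace; performing the $[N_1]$-integration and absorbing the $N_1$-translations into the ambient integration, precisely as in the unfolding step of the proof of Proposition~\ref{prop:period-Y->X}, I expect the period to reorganise as a finite sum of period integrals of the form
\begin{equation*}
  \int_{[G(Z')]} f'(g')\, \theta_{\psi,\chi_1,\chi_2,X,Y_0}(g', 1, \phi')\, dg',
\end{equation*}
with $f' \in \sigma$ and $\phi'$ built from $\Phi$, where $Z'$ runs over non-degenerate subspaces of $X$ of dimension $\dim X - (r-2)$, together with contributions that are constant terms of $f$ along unipotent subgroups of $G(X)$ and hence vanish by cuspidality of $f$. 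Since $\dim Z' = \dim X - (r-2) > \dim X - r$, each surviving period vanishes by part~(2) of Proposition~\ref{prop:period-X->Y}; therefore the whole integral vanishes. The case $r = 2$ is the degenerate base case: here $Z = X$, $N_1$ is the unipotent radical of a genuine parabolic subgroup of $G(X)$, and the period is already a constant term of $f$ up to the $[G(Z_1)]$-integration, so it vanishes directly by cuspidality.

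The main obstacle will be the bookkeeping in this unfolding: writing down explicitly the action of the Heisenberg group $N_1$ on the Weil representation in the mixed model (this is where the hypotheses that $z_0$ is isotropic and $Y_0$ anisotropic enter), identifying exactly which subspaces $Z'$ occur and checking that each has dimension strictly greater than $\dim X - r$, and separating off cleanly the constant-term contributions so that every resulting piece is killed either by cuspidality of $f$ or by Proposition~\ref{prop:period-X->Y}(2). A minor preliminary point is to choose the polarisation of $R_{E/F}(Y_0 \otimes X)$ compatibly with the orthogonal decomposition of $X$ determined by $z_0$, $z_0'$, $Z_1$ and $Z^{\perp}$, so that these computations become transparent.
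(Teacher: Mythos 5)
The paper does not prove this proposition by directly unfolding the $R_1^0$-period; your proposed route takes a different direction and has a genuine gap. You assert that integrating over $[N_1]$ should ``reorganise'' the period into a sum of period integrals over non-degenerate $Z'\subset X$ with $\dim Z' = \dim X - (r-2)$, plus constant terms. This step is not substantiated, and I do not see how it could go through: $R_1^0$ has reductive part $G(Z_1)$ of dimension $\dim X - r$, and Fourier expansion of the theta series along the Heisenberg group $N_1$ produces Fourier--Jacobi-type integrals on $G(Z_1)$, not periods over strictly \emph{larger} non-degenerate subspaces. Nothing in the unfolding manufactures the extra two dimensions needed to invoke Prop.~\ref{prop:period-X->Y}(2), and the terms you call constant-term contributions are not identified; in particular the case $r=2$ is \emph{not} a genuine constant term of $f$ (the theta series is not constant on $N_1$), so cuspidality alone does not kill it.

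The paper's argument is the reverse of yours. One starts from the theta lift of $\sigma$ to the space $W$ in the relevant Witt tower with Witt index $r-1$ --- \emph{below} the first occurrence --- which vanishes identically. One then takes the $\psi_{E,\tilde{c}}$-Fourier coefficient of that (vanishing) lift, where $\tilde{c}=\diag\{c,0\}$ is the degenerate $(r-1)\times(r-1)$-Hermitian matrix built from a non-degenerate $c$ of size $r-2$. The unfolding of this Fourier coefficient proceeds over the set $\{y\in Y^{r-1}:\langle y,y\rangle_Y=2\tilde{c}\}$, which has exactly \emph{two} $G(Y)$-orbits because $\tilde{c}$ is degenerate of corank one: representatives $(y^0,0)$ and $(y^0,z_0)$. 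The first orbit collapses to a period over $[G(Z)]$ with $\dim Z = \dim Y-(r-2) > \dim Y - r$, which vanishes by Prop.~\ref{prop:period-Y->X}(2). Since the whole Fourier coefficient is zero, the second orbit contribution --- which is precisely the period over $[R_1^0]$ --- is forced to vanish. This two-orbit decomposition under a degenerate Fourier coefficient is the crux of the proof, and it is absent from your proposal. (Your absolute-convergence argument is fine and matches the paper's, which observes that $R_1^0$ is a semidirect product of $G(Z_{-1})$ and a unipotent subgroup and applies the earlier proposition.)
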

We have the  symmetric version:
\begin{prop}
  Let $\sigma\in\CA_\cusp(G(Y))$ and $X_0$ a (possibly trivial) anisotropic skew-Hermitian space. Assume that
  \begin{equation*}
    \FO_{\psi,\chi_1,\chi_2,Y}^{X_0} (\sigma) = \dim X_0+2r
  \end{equation*}
 with $1\le r \le \dim Y$. Assume that the first occurrence is realised by $X$.  Let $Z$ be an non-degenerate subspace of $Y$ of dimension $\dim Y - (r-2)$  containing  an isotropic vector  $z_0$. Let 
  $R_1^0$ be the subgroup of $G (Z)$ consisting of those elements that fix $z_0$.
Then
\begin{equation*}
  \int_{[R_1^0]} \theta_{\psi,\chi_1,\chi_2,X_0,Y} (1,h,\Phi) f (h) dh
\end{equation*}
converges absolutely, and vanishes
  for all $\Phi \in \cS ((R_{E/F} (Y\otimes X_0))^+ (\A))$ and $f\in\sigma$.
\end{prop}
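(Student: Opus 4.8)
The plan is to follow the pattern of the proof of Proposition~\ref{prop:period-Y->X}: I would realise the integral in question as one of the orbital contributions to a Fourier coefficient of a theta lift that lies \emph{below} the first occurrence --- hence is identically zero --- and show that the remaining orbital contribution already vanishes by Proposition~\ref{prop:period-Y->X}(2).

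Absolute convergence is handled just as in Proposition~\ref{prop:period-Y->X}: cuspidality of $\sigma$ makes $f$ rapidly decreasing on a Siegel domain of $G(Y)$, so the integrand $\theta_{\psi,\chi_1,\chi_2,X_0,Y}(1,\cdot,\Phi)\,f(\cdot)$ is too, and a Siegel domain of $R_1^0$ is contained in one of $G(Y)$. One may assume $r\ge 2$, since for $r=1$ there is no subspace $Z$ of $Y$ of dimension $\dim Y-(r-2)=\dim Y+1$ and the statement is vacuous. Write $Y=Z\perp Z^\perp$ with $Z^\perp$ non-degenerate of dimension $r-2$, and fix a basis $y'=(y_1',\dots,y_{r-2}')$ of $Z^\perp$; the pointwise stabiliser of $y'$ in $G(Y)$ is $G(Z)$, and $R_1^0$ is the stabiliser of the isotropic vector $z_0$ inside $G(Z)$.

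Now take the member $X_{-1}$ of the Witt tower of $X_0$ with $\dim X_{-1}=\dim X_0+2(r-1)=\FO_{\psi,\chi_1,\chi_2,Y}^{X_0}(\sigma)-2$. Being below the first occurrence, the theta lift $\theta_{\psi,\chi_1,\chi_2,Y}^{X_{-1}}(\sigma)$ is identically zero, so all Fourier coefficients of $\theta_{\psi,\chi_1,\chi_2,Y}^{X_{-1}}(f)$ vanish. I would compute the Fourier coefficient along the unipotent $N'\cong\Her_{r-1}$ of $G(X_{-1})$ that acts trivially on $X_0$, against the \emph{degenerate} character $\psi_{E,c}$ with $c=\diag(c_0,0)$, where $2c_0$ is the Gram matrix of $y'$ (so $c_0$ is non-degenerate of rank $r-2$ and $Z=(Z^\perp)^\perp$). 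Unfolding the theta kernel as in the proof of Proposition~\ref{prop:period-Y->X}, one is left with a sum over lattice vectors $y=(y_1,\dots,y_{r-1})$ of Gram matrix $2c$: the first $r-2$ entries span a translate of $Z^\perp$ and $y_{r-1}$ is an isotropic vector of $Z$. These form two $G(Y)$-orbits. The orbit $y_{r-1}=0$ has stabiliser $G(Z)$, and its contribution unfolds to an integral over $G(Z)(\A)\lmod G(Y)(\A)$ of periods of the shape $\int_{[G(Z)]}f(h'h)\,\theta_{\psi,\chi_1,\chi_2,X_0,Y}(1,h',\phi')\,dh'$, which are identically zero by Proposition~\ref{prop:period-Y->X}(2) since $\dim Z=\dim Y-(r-2)>\dim Y-r$. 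The orbit $y_{r-1}\ne 0$ (all non-zero isotropic vectors of $Z$ form one orbit by Witt's theorem) has stabiliser $R_1^0$, and its contribution unfolds to $\int_{R_1^0(\A)\lmod G(Y)(\A)}\bigl(\int_{[R_1^0]}f(h'h)\,\theta_{\psi,\chi_1,\chi_2,X_0,Y}(1,h',\phi'')\,dh'\bigr)\,dh$, whose inner integral is, up to a non-zero constant, the period in the statement --- with $\phi''\in\cS((R_{E/F}(Y\otimes X_0))^+(\A))$ and $f(\cdot\,h)\in\sigma$ running over all admissible data as $\Phi$ varies. Since the full Fourier coefficient is zero and the first orbital term is zero, the second orbital term vanishes; and if the period in the statement were non-zero for some choice of $f$ and $\Phi$, then --- by the same ``smearing'' of $\Phi$ towards $y'$ used in the proof of Proposition~\ref{prop:period-Y->X}(2) --- one could force the second orbital term, hence the whole Fourier coefficient, to be non-zero, a contradiction. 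Thus the period vanishes for all $\Phi$ and $f$.

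The main obstacle, exactly as in Proposition~\ref{prop:period-Y->X}, is the orbit analysis of this degenerate Fourier coefficient: verifying that there are only the two $G(Y)$-orbits described, computing their stabilisers, checking that the orbit with $y_{r-1}\ne 0$ yields precisely the $R_1^0$-period with Schwartz datum in $\cS((R_{E/F}(Y\otimes X_0))^+(\A))$ and cusp form in $\sigma$, and justifying the ``smearing'' step that turns a non-vanishing inner period into a non-vanishing Fourier coefficient. One must also check, as before, that the interchanges of summation and integration in the unfolding are legitimate, using the absolute convergence noted above; the limiting case $r=2$, where the character $\psi_{E,c}$ has rank $0$, $Z=Y$, and the ``Fourier coefficient'' is a genuine constant term, is covered by the same argument without change.
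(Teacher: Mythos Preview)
Your proposal is correct and follows essentially the same approach as the paper: both take a degenerate Fourier coefficient (of rank $r-2$ inside $\Her_{r-1}$) of the vanishing theta lift to the member of the Witt tower of dimension $\dim X_0+2(r-1)$, split the resulting sum into the two $G(Y)$-orbits with stabilisers $G(Z)$ and $R_1^0$, kill the first via Proposition~\ref{prop:period-Y->X}(2), and conclude by the same ``smearing'' argument. Your explicit handling of the edge cases $r=1$ (vacuous) and $r=2$ (rank-zero character, $Z=Y$) is a welcome addition that the paper leaves implicit.
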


\begin{proof}
  The idea is similar to Prop.~\ref{prop:period-Y->X}. We omit some details which can be found in loc. cit. Let $Z_{-1}$ be the orthogonal complement in $Z$ to a hyperbolic plane containing $z_0$. The absolute convergence follows from Prop.~\ref{prop:period-Y->X} and the fact that $R_1^0$ is a semidirect product of $G (Z_{-1})$ and a unipotent subgroup.

We consider the theta lift of $\sigma$ to $G (W)$ such that $W$ is in the same Witt tower as $X$ with Witt index $r-1$. This is an earlier lift than the first occurrence and therefore must vanish for all choice of data. Let $c$  be a non-degenerate $(r-2)\times (r-2)$-Hermitian matrix. Set $\til {c} = \diag \{c,0\}$. This is a degenerate $(r-1)\times (r-1)$-Hermitian matrix. We take the $\psi_{E,\til {c}}$-th Fourier coefficient of the  theta lift. This is the integral
  \begin{equation*}
    \int_{[N']} \int_{[G (Y)]} \theta_{\psi,\chi_1,\chi_2,W,Y} (n (\beta) g,h,\Phi) f (h)\overline {\psi_{E,\til { c}} (\beta)} dh dn
  \end{equation*}
where $N'$ is the unipotent subgroup of $G (W)$ that is isomorphic to the set of $(r-1)\times (r-1)$-Hermitian matrices. Standard computation shows that the above is equal to
\begin{equation}\label{eq:int-Fourier-c-0}
  \int_{[G (Y)]} \sum_{\substack{y \in Y^{r-1}\\ \form{y}{y}_Y=2\til { c}}}\sum_{u\in R_{E/F} (Y\otimes W_0)^+}\omega_{\psi,\chi_1,\chi_2,W,Y} (g,h) \Phi (y,u) f (h) dh .
\end{equation}
Here the Schwartz space is realised as
\begin{equation*}
  \cS (  R_{E/F} (Y\otimes \ell_{r-1}^-) (\A) \oplus (R_{E/F} (Y\otimes W_0))^+ (\A)).
\end{equation*}
Fix $y^0\in Y^{r-2}$ that represents  $2 c$.  Let $Z$ be the orthogonal complement to $y^0$ in $Y$. Assume that $Z$ is isotropic with an isotropic vector $z_0$. Let $R_1^0$ be the subgroup consisting of those elements that fix $z_0$. There are two orbits of the set of elements in $Y^{r-1}$ that represent $2\til {c}$ under the action of $G (Y)$. The representatives can be chosen to be $(y^0,0)$ and $(y^0,z_0)$.
Then \eqref{eq:int-Fourier-c-0} is equal to
\begin{align*}
  &\int_{[G (Y)]} \sum_{\gamma \in G (Z)\lmod G (Y)}\sum_{u\in (R_{E/F} (Y\otimes W_0))^+}\omega_{W,Y} (g,h) \Phi (\gamma^{-1} ( y^0,0) , u) f (h) dh \\
&+ \int_{[G (Y)]} \sum_{\gamma \in R_1^0 \lmod G (Y)}\sum_{u\in (R_{E/F} (Y\otimes W_0))^+}\omega_{W,Y} (g,h) \Phi (\gamma^{-1} ( y^0,z_0) , u) f (h) dh\\
=&\int_{[G (Y)]} \sum_{\gamma \in G (Z)\lmod G (Y)}\sum_{u\in (R_{E/F} (Y\otimes W_0))^+}\omega_{W,Y} (g,\gamma h) \Phi ((y^0,0) , u) f (h) dh \\
&+ \int_{[G (Y)]} \sum_{\gamma \in R_1^0 \lmod G (Y)}\sum_{u\in (R_{E/F} (Y\otimes W_0))^+}\omega_{W,Y} (g,\gamma h) \Phi ( (y^0,z_0) , u) f (h) dh\\
=&\int_{G (Z) (F)\lmod  G (Y) (\A)} \sum_{u\in R_{E/F} (Y\otimes W_0)^+}\omega_{W,Y} (g, h) \Phi (( y^0,0) , u) f (h) dh \\
&+ \int_{R_1^0 (F)\lmod G (Y) (\A)} \sum_{u\in R_{E/F} (Y\otimes W_0)^+}\omega_{W,Y} (g, h) \Phi ( ( y^0,z_0) , u) f (h) dh.
\end{align*}
Note that $W_0 = X_0$.
The first part has inner integral of the form
\begin{equation*}
  \int_{[ G (Z)] } \theta_{X_0,Y} (1, h', \phi)  f (h') dh'
\end{equation*}
for some $\phi \in \cS((R_{E/F} (Y\otimes X_0))^+(\A))$ and thus must vanish by Prop.~\ref{prop:period-Y->X} because the dimension of $Z$ is too large. Thus the second part must vanish because the sum is known to be zero. This has inner integral of the form
\begin{equation*}
  \int_{[ R_1^0] } \theta_{X_0,Y} (1, h', \phi') f (h') dh'
\end{equation*}
for some $\phi' \in \cS((R_{E/F} (Y\otimes X_0))^+ (\A))$ and $f\in \sigma$. If it does not vanish for some choice of data then it is easy to see that we can choose some data so that the Fourier coefficient does not vanish. Thus the inner integral above must vanish.
\end{proof}

\bibliographystyle{alpha}
\bibliography{MyBib}

\begin{thebibliography}{KMSW14}

\bibitem[Ada89]{MR1021501}
J.~Adams.
\newblock {$L$}-functoriality for dual pairs.
\newblock {\em Ast\'erisque}, (171-172):85--129, 1989.
\newblock Orbites unipotentes et repr{\'e}sentations, II.

\bibitem[Art13]{MR3135650}
James Arthur.
\newblock {\em The endoscopic classification of representations}, volume~61 of
  {\em American Mathematical Society Colloquium Publications}.
\newblock American Mathematical Society, Providence, RI, 2013.
\newblock Orthogonal and symplectic groups.

\bibitem[BMM12]{bergeron12:_hodge}
Nicolas Bergeron, John Millson, and Colette Moeglin.
\newblock Hodge type theorems for arithmetic manifolds associated to orthogonal
  groups.
\newblock arXiv:1110.3049v2, 2012.

\bibitem[BMM14]{bergeron14:_hodge}
Nicolas Bergeron, John Millson, and Colette Moeglin.
\newblock The hodge conjecture and arithmetic quotients of complex balls.
\newblock arXiv:1306.1515v2, 2014.

\bibitem[GJS09]{MR2540878}
David Ginzburg, Dihua Jiang, and David Soudry.
\newblock Poles of {$L$}-functions and theta liftings for orthogonal groups.
\newblock {\em J. Inst. Math. Jussieu}, 8(4):693--741, 2009.

\bibitem[GPSR87]{MR892097}
Stephen Gelbart, Ilya Piatetski-Shapiro, and Stephen Rallis.
\newblock {\em Explicit constructions of automorphic {$L$}-functions}, volume
  1254 of {\em Lecture Notes in Mathematics}.
\newblock Springer-Verlag, Berlin, 1987.

\bibitem[GQT14]{gan14:_regul_siegel_weil_formul_secon}
Wee~Teck Gan, Yannan Qiu, and Shuichiro Takeda.
\newblock The regularized siegel-weil formula (the second term identity) and
  the rallis inner product formula.
\newblock arXiv:1207.4709v3, 2014.

\bibitem[HH12]{MR3012154}
Hongyu He and Jerome~William Hoffman.
\newblock Picard groups of {S}iegel modular 3-folds and {$\theta$}-liftings.
\newblock {\em J. Lie Theory}, 22(3):769--801, 2012.

\bibitem[HKS96]{MR1327161}
Michael Harris, Stephen~S. Kudla, and William~J. Sweet.
\newblock Theta dichotomy for unitary groups.
\newblock {\em J. Amer. Math. Soc.}, 9(4):941--1004, 1996.

\bibitem[Ich04]{MR2064052}
Atsushi Ichino.
\newblock A regularized {S}iegel-{W}eil formula for unitary groups.
\newblock {\em Math. Z.}, 247(2):241--277, 2004.

\bibitem[Jia14]{jiang14:_autom_integ_trans_class_group_i}
Dihua Jiang.
\newblock Automorphic integral transforms for classical groups i: Endoscopy
  correspondences.
\newblock In James~W. Cogdell, Freydoon Shahidi, and David Soudry, editors,
  {\em Automorphic Forms and Related Geometry: Assessing the Legacy of I.I.
  Piatetski-Shapiro}, volume 614 of {\em Contemporary Mathematics}. American
  Mathematical Society, Providence, RI, 2014.

\bibitem[JS07]{MR2330445}
Dihua Jiang and David Soudry.
\newblock On the genericity of cuspidal automorphic forms of {${\rm
  SO}(2n+1)$}. {II}.
\newblock {\em Compos. Math.}, 143(3):721--748, 2007.

\bibitem[JW14a]{jiang-wu-u2}
Dihua Jiang and Chenyan Wu.
\newblock On $(\chi,b)$-factors of cuspidal automorphic representations of
  unitary groups, {II}.
\newblock preprint, 2014.

\bibitem[JW14b]{jiang-wu-sp}
Dihua Jiang and Chenyan Wu.
\newblock Periods and $(\chi,b)$-factors of cuspidal automorphic forms of
  symplectic groups.
\newblock preprint, 2014.

\bibitem[KMSW14]{KMSW}
Tasho Kaletha, Alberto Minguez, Sug~Woo Shin, and Paul-James White.
\newblock Endoscopic classification of representations: Inner forms of unitary
  groups.
\newblock arXiv:1409.3731, 2014.

\bibitem[KR94a]{kudla94:_regul_siegel_weil_formul}
Stephen Kudla and Stephen Rallis.
\newblock A regularized siegel-weil formula: the first term identity.
\newblock {\em Ann. of Math. (2)}, 140:1--80, 1994.

\bibitem[KR94b]{MR1289491}
Stephen~S. Kudla and Stephen Rallis.
\newblock A regularized {S}iegel-{W}eil formula: the first term identity.
\newblock {\em Ann. of Math. (2)}, 140(1):1--80, 1994.

\bibitem[KRS92]{MR1150600}
Stephen~S. Kudla, Stephen Rallis, and David Soudry.
\newblock On the degree {$5$} {$L$}-function for {${\rm Sp}(2)$}.
\newblock {\em Invent. Math.}, 107(3):483--541, 1992.

\bibitem[KS97]{MR1459856}
Stephen~S. Kudla and W.~Jay Sweet, Jr.
\newblock Degenerate principal series representations for {${\rm U}(n,n)$}.
\newblock {\em Israel J. Math.}, 98:253--306, 1997.

\bibitem[Kud94]{MR1286835}
Stephen~S. Kudla.
\newblock Splitting metaplectic covers of dual reductive pairs.
\newblock {\em Israel J. Math.}, 87(1-3):361--401, 1994.

\bibitem[Lan71]{MR0419366}
Robert~P. Langlands.
\newblock {\em Euler products}.
\newblock Yale University Press, New Haven, Conn.-London, 1971.
\newblock A James K. Whittemore Lecture in Mathematics given at Yale
  University, 1967, Yale Mathematical Monographs, 1.

\bibitem[Li92a]{MR1168122}
Jian-Shu Li.
\newblock Nonexistence of singular cusp forms.
\newblock {\em Compositio Math.}, 83(1):43--51, 1992.

\bibitem[Li92b]{MR1166512}
Jian-Shu Li.
\newblock Nonvanishing theorems for the cohomology of certain arithmetic
  quotients.
\newblock {\em J. Reine Angew. Math.}, 428:177--217, 1992.

\bibitem[LZ98]{MR1443883}
Soo~Teck Lee and Chen-Bo Zhu.
\newblock Degenerate principal series and local theta correspondence.
\newblock {\em Trans. Amer. Math. Soc.}, 350(12):5017--5046, 1998.

\bibitem[LZ08]{MR2378075}
Soo~Teck Lee and Chen-Bo Zhu.
\newblock Degenerate principal series and local theta correspondence. {III}.
  {T}he case of complex groups.
\newblock {\em J. Algebra}, 319(1):336--359, 2008.

\bibitem[M{\oe}g97a]{MR1473165}
Colette M{\oe}glin.
\newblock Non nullit\'e de certains rel\^evements par s\'eries th\'eta.
\newblock {\em J. Lie Theory}, 7(2):201--229, 1997.

\bibitem[M{\oe}g97b]{MR1473166}
Colette M{\oe}glin.
\newblock Quelques propri\'et\'es de base des s\'eries th\'eta.
\newblock {\em J. Lie Theory}, 7(2):231--238, 1997.

\bibitem[M{\oe}g11]{MR2906916}
Colette M{\oe}glin.
\newblock Conjecture d'{A}dams pour la correspondance de {H}owe et filtration
  de {K}udla.
\newblock In {\em Arithmetic geometry and automorphic forms}, volume~19 of {\em
  Adv. Lect. Math. (ALM)}, pages 445--503. Int. Press, Somerville, MA, 2011.

\bibitem[Mok13]{mok13:_endos}
Chung~Pang Mok.
\newblock Endoscopic classification of representations of quasi-split unitary
  groups.
\newblock arXiv:1206.0882v5, 2013.

\bibitem[MW95]{MR1361168}
C.~M{\oe}glin and J.-L. Waldspurger.
\newblock {\em Spectral decomposition and {E}isenstein series}, volume 113 of
  {\em Cambridge Tracts in Mathematics}.
\newblock Cambridge University Press, Cambridge, 1995.
\newblock Une paraphrase de l'{\'E}criture [A paraphrase of Scripture].

\bibitem[Ral82]{MR658543}
Stephen Rallis.
\newblock Langlands' functoriality and the {W}eil representation.
\newblock {\em Amer. J. Math.}, 104(3):469--515, 1982.

\bibitem[Ral91]{MR1159270}
Stephen Rallis.
\newblock Poles of standard {$L$} functions.
\newblock In {\em Proceedings of the {I}nternational {C}ongress of
  {M}athematicians, {V}ol.\ {I}, {II} ({K}yoto, 1990)}, pages 833--845. Math.
  Soc. Japan, Tokyo, 1991.

\bibitem[Sha10]{MR2683009}
Freydoon Shahidi.
\newblock {\em Eisenstein series and automorphic {$L$}-functions}, volume~58 of
  {\em American Mathematical Society Colloquium Publications}.
\newblock American Mathematical Society, Providence, RI, 2010.

\bibitem[Tan99]{MR1692899}
Victor Tan.
\newblock Poles of {S}iegel {E}isenstein series on {${\rm U}(n,n)$}.
\newblock {\em Canad. J. Math.}, 51(1):164--175, 1999.

\bibitem[Wu13]{MR3071813}
Chenyan Wu.
\newblock Irreducibility of theta lifting for unitary groups.
\newblock {\em J. Number Theory}, 133(10):3296--3318, 2013.

\end{thebibliography}
\end{document}